\newtheorem{thm}{Theorem}[section]
\newtheorem{cor}[thm]{Corollary}
\newtheorem{prop}[thm]{Proposition}
\theoremstyle{definition}
\newtheorem{defn}[thm]{Definition}
\newtheorem{exmp}[thm]{Example}
\theoremstyle{remark}
\let\c@equation\c@thm
\numberwithin{equation}{section}
\title[Fox-Neuwirth cells, quantum shuffle algebras, and resultant]{Fox-Neuwirth cells, quantum shuffle algebras, and character sums of the resultant}
\author{Anh Trong Nam Hoang}
\address{School of Mathematics, University of Minnesota, Minneapolis, MN}
\email{hoang278@umn.edu}
\date{\today}
\begin{document}

\begin{abstract}
We give an upper bound on character sums of the resultant over pairs of monic square-free polynomials of given degrees, answering a question of Ellenberg and Shusterman in the quadratic case. Our approach is topological: we compute the homology of braid groups on multi-punctured planes and prove a vanishing range for the homology of mixed braid groups with rank-$1$ local coefficients associated to characters of finite fields. Our method involves constructing a cellular stratification for configuration spaces of multi-punctured planes and relating their twisted homology with more general exponential coefficients to the cohomology of certain bimodules over quantum shuffle algebras.
\end{abstract}

\maketitle

\tableofcontents


\section{Introduction}

Given two monic polynomials $f$ and $g$ in one variable of positive degrees over a field $\mathbf{k}$, their {\it resultant} $\mathcal{R}(f,g)$ is given by the product formula
\[ \mathcal{R}(f,g) := \prod_{i,j}(x_i - y_j) \]
where $x_1,\dots,x_n$ are roots of $f$ and $y_1,\dots,y_m$ are roots of $g$ in any algebraically closed extension of $\mathbf{k}$. Alternatively, if
\[f(x) = x^n + a_{n-1}x^{n-1} + \dots + a_1x+a_0\]
and
\[g(x) = x^m + b_{m-1}x^{m-1} + \dots + b_1x+b_0,\]
then $\mathcal{R}(f,g)$ is an integer-coefficient polynomial expression of the coefficients $a_i$ and $b_j$, which can be computed by taking the determinant of the polynomials' Sylvester matrix:
\[\mathcal{R}(f,g)=
\begin{vmatrix}
a_0 & a_1 & \dots & a_{n-1} & 1 & 0 & \dots & 0\\
0 & a_0 & a_1 & \dots & a_{n-1} & 1 & \dots & 0\\
\vdots & \ddots & \ddots & \ddots & \ddots & \ddots & \ddots & \vdots\\
0 & 0 & \dots & a_0 & a_1 & \dots & a_{n-1} & 1\\
b_0 & b_1 & \dots & b_{m-1} & 1 & 0 & \dots & 0\\
0 & b_0 & b_1 & \dots & b_{m-1} & 1 & \dots & 0\\
\vdots & \ddots & \ddots & \ddots & \ddots & \ddots & \ddots & \vdots\\
0 & 0 & \dots & b_0 & b_1 & \dots & b_{m-1} & 1\\
\end{vmatrix}.\]

The defining property of the resultant lies in the following fact: two polynomials $f$ and $g$ share a common root if and only if $\mathcal{R}(f,g)=0$. This classical concept was first introduced by Cayley \cite{cayley} in the context of elimination theory and has since found abundant applications in generalizing the notion of discriminants, solving systems of polynomial equations, and proving important theorems such as Bezout's Theorem and Hilbert's Nullstellensatz. A beautiful survey on the general theory of resultants from this perspective can be found in \cite{gelfand-kapranov-zelevinsky}. Over a field $\mathbf{k}$, the resultant can be interpreted as a map $\mathcal{R}: \mathbb{A}^n_\mathbf{k} \times \mathbb{A}^m_\mathbf{k} \to \mathbb{A}^1_\mathbf{k}$, where the affine spaces $\mathbb{A}^n_\mathbf{k}$ and $\mathbb{A}^m_\mathbf{k}$ may be identified with spaces of monic degree-$n$ and degree-$m$ polynomials over $\mathbf{k}$. The resultant locus $\mathbb{A}^n_\mathbf{k}\times\mathbb{A}^m_\mathbf{k}\setminus\mathcal{R}^{-1}(0)$, namely the space of pairs of monic coprime polynomials of degrees $n$ and $m$, is a classically studied object, whereas the topology and arithmetic of the hypersurface $\mathcal{R}^{-1}(1)$ were recently studied by \cite{fw17} when $n=m$.

Fix a finite field $\mathbb{F}_q$, a prime $\ell$ invertible in $\mathbb{F}_q$, and a choice of inclusion $\overline{\mathbb{Q}}_\ell \subseteq \mathbb{C}$. Let $\chi : \mathbb{F}_q \to \mathbb{C}$ be a nontrivial character. In this paper, we are concerned with the {\it character sum}
\[ F_\chi(n,m,q) := \sum_{f,g} \chi(\mathcal{R}(f,g))\]
where $f$ and $g$ range over monic {\it square-free} polynomials of degrees $n$ and $m$ over $\mathbb{F}_q$. This sum was first considered by Ellenberg and Shusterman in the case when $\chi$ is the quadratic character \cite{ellenberg-birs}. Characters of the resultant over finite fields often appear in the form of residue symbols (e.g., the Jacobi symbol when the associated character is quadratic) and therefore play a central role in recent works on quadratic reciprocity \cite{clark-pollack} as well as Dirichlet series and M\"{o}bius functions over function fields \cite{sawin-shusterman1,sawin-shusterman2,sawin22}.

The goal of this paper is to produce an upper bound on the character sum $F_\chi(n,m,q)$ as a function of $q$:

\begin{thm}\label{thm:main_intro}
    Let $\chi$ be a nontrivial character of $\mathbb{F}_q$. Then
    \[|F_\chi(n,m,q)| \le 2^{2n+2m-1}\frac{q^{n+m+1-\mathrm{max}(n,m)/2}-1}{\sqrt{q}-1}\]
    if $\chi$ is quadratic, and
    \[|F_\chi(n,m,q)| \le 2^{2n+2m-1}\frac{q^{n+m+(1-\mathrm{max}(n,m))/2}-1}{\sqrt{q}-1}\]
    otherwise.
\end{thm}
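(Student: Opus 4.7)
The overall strategy is to reinterpret the character sum via the Grothendieck--Lefschetz trace formula and then bound the resulting twisted étale cohomology using a topological cell decomposition. Let $Y_{n,m}/\mathbb{F}_q$ be the scheme parametrizing pairs $(f,g)$ of coprime monic square-free polynomials of degrees $n$ and $m$; equivalently, this is the mixed configuration variety of $\mathbb{A}^1$ with two colors of particles, an open subvariety of $\mathrm{Conf}_n(\mathbb{A}^1)\times\mathrm{Conf}_m(\mathbb{A}^1)$. Since $\chi(\mathcal{R}(f,g))=0$ whenever $\gcd(f,g)\ne 1$, the character sum is supported on $Y_{n,m}(\mathbb{F}_q)$; writing $\mathcal{L}_\chi$ for the rank-$1$ Kummer sheaf on $\mathbb{G}_m$ attached to $\chi$ and $\mathcal{R}\colon Y_{n,m}\to\mathbb{G}_m$ for the resultant, Grothendieck--Lefschetz yields
\[ F_\chi(n,m,q)=\sum_i(-1)^i\operatorname{Tr}\bigl(\mathrm{Frob}_q\mid H^i_c(Y_{n,m,\overline{\mathbb{F}}_q},\mathcal{R}^*\mathcal{L}_\chi)\bigr). \]
By Deligne's weight bound $|\operatorname{Tr}(\mathrm{Frob}_q\mid H^i_c)|\le h^i_c\cdot q^{i/2}$, the theorem reduces to (i) a uniform Betti-number bound $h^i_c\le 2^{2n+2m-1}$ and (ii) a top-degree vanishing $H^i_c=0$ for $i>I_{\max}$, with $I_{\max}=2(n+m)-\max(n,m)+1$ in the quadratic case and $I_{\max}=2(n+m)-\max(n,m)$ otherwise.

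By Artin comparison both items can be studied on the complex-analytic space $Y_{n,m}(\mathbb{C})$ equipped with the rank-$1$ local system determined by the standard linking-number monodromy recipe for $\chi\circ\mathcal{R}$ as colored particles swap. The plan is to construct a Fox--Neuwirth CW stratification on $Y_{n,m}(\mathbb{C})$ whose open cells are indexed by the vertical order of the $n+m$ colored particles together with supplementary horizontal combinatorics; a direct count will bound the total number of cells, and hence the total twisted Betti number, by $2^{2n+2m-1}$, settling (i) uniformly in the local system. Via Poincar\'e duality on the smooth complex $(n+m)$-fold $Y_{n,m}$, item (ii) is equivalent to the low-degree vanishing $H^j(Y_{n,m}(\mathbb{C});\mathcal{L}_\chi^{-1})=0$ for $j<\max(n,m)-1$ in the quadratic case and for $j<\max(n,m)$ otherwise.

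The algebraic heart of the argument --- and what I expect to be the main obstacle --- is to identify the twisted Fox--Neuwirth cellular cochain complex with a cobar-type complex of bimodules over a quantum shuffle algebra $\mathcal{A}_\chi$ attached to a $2$-dimensional braided vector space whose braiding encodes the values of $\chi$. In this framework (ii) becomes a concentration statement for certain $\operatorname{Ext}$/$\operatorname{Tor}$ groups of $\mathcal{A}_\chi$-bimodules, which I would prove through careful tracking of the internal grading together with a spectral-sequence comparison to a tractable reference algebra whose cohomology can be computed directly. The two cases of the theorem differ by exactly one homological degree because the relation $\chi^2=1$ symmetrizes part of the braiding enough to kill a differential that generically survives; making this dichotomy precise, uniformly in $(n,m)$, is the principal technical difficulty. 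Combining (i) and (ii) with Deligne's weights finally yields
\[ |F_\chi(n,m,q)|\le 2^{2n+2m-1}\sum_{i=0}^{I_{\max}}q^{i/2}=2^{2n+2m-1}\cdot\frac{q^{(I_{\max}+1)/2}-1}{\sqrt{q}-1}, \]
which for the two values of $I_{\max}$ above recovers the two bounds in the theorem.
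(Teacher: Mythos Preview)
Your global architecture is exactly the paper's: Grothendieck--Lefschetz for $\mathcal{R}^*\mathcal{L}_\chi$, Artin comparison, Poincar\'e duality to convert top-degree compactly-supported vanishing into low-degree vanishing of ordinary (co)homology, Deligne's weight bounds, and the crude Fox--Neuwirth cell count $\binom{n+m}{n}\binom{n+m-1}{i}\le 2^{2n+2m-1}$. Your bookkeeping with $I_{\max}$ and the final geometric sum also matches the paper line for line.

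Where you diverge is precisely at the step you flag as the main obstacle. You propose to stratify $\mathrm{Conf}_{n,m}(\mathbb{C})$ directly and identify its twisted cellular complex with a bimodule cobar complex over the quantum shuffle algebra of a \emph{two}-dimensional braided vector space. The paper does not do this. Instead it passes through the Fadell--Neuwirth fibration
\[
\mathrm{Conf}_n(\mathbb{C}_m)\longrightarrow \mathrm{Conf}_{n,m}(\mathbb{C})\longrightarrow \mathrm{Conf}_m(\mathbb{C}),
\]
builds the Fox--Neuwirth stratification on the \emph{fiber} $\mathrm{Conf}_n(\mathbb{C}_m)$ (configurations of the $m$-punctured plane), and identifies that chain complex with an iterated derived tensor product of $\mathfrak{A}$-bimodules $\mathfrak{M}$ where $\mathfrak{A}=\mathfrak{A}(V_\epsilon)$ is the quantum shuffle algebra on a \emph{one}-dimensional $V$. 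This is what makes the computation go through: for $\dim V=1$ one has $\mathfrak{A}\cong\Gamma_{-q}[x]$, the quantum divided power algebra, whose structure is completely known, and $\mathfrak{M}$ is free over it. The dichotomy between the quadratic and nonquadratic cases then becomes the concrete question of whether $p$ is a power of $-q$ in the formula for $y_n\star x_r$, not an abstract ``symmetrized braiding kills a differential'' as you suggest.

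The second point your outline misses is how $\max(n,m)$ appears. In the paper it does not come from a single spectral sequence or grading argument on $\mathrm{Conf}_{n,m}$; rather, the fiber computation gives vanishing up to $n-2$ (resp.\ $n-1$), the Serre spectral sequence of the fibration transports this to $\mathrm{Conf}_{n,m}$, and then one invokes the symmetry $\mathrm{Conf}_{n,m}\cong\mathrm{Conf}_{m,n}$ (valid here because $\sigma_V=\sigma_W$) to upgrade the bound to $\max(n,m)-2$ (resp.\ $\max(n,m)-1$). In a direct approach on $\mathrm{Conf}_{n,m}$ with a $2$-dimensional braided vector space it is not clear how you would extract the $\max$ rather than, say, $\min(n,m)$ or $n+m$; you would need to articulate this.

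In short: your reduction to a topological vanishing statement is correct and identical to the paper's, but the mechanism you propose for proving that vanishing is different and substantially harder than the one the paper actually uses. The paper's key simplification---punctured-plane configurations plus the Fadell--Neuwirth fibration plus $n\leftrightarrow m$ symmetry---is what makes the quantum-shuffle computation one-dimensional and hence explicit.
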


This result is proved as Theorem~\ref{thm:main_nb}.
As a consequence, we make the following observation about the asymptotic behavior of character averages of the resultant (Corollary~\ref{cor:res_avg}).

\begin{cor}
   For a sufficiently large $q$, the asymptotic average of a nontrivial character of the resultant over pairs of monic square-free polynomials over $\mathbb{F}_q$ approaches $0$ as the degree of either or both polynomials grows indefinitely. 
\end{cor}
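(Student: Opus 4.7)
The plan is to deduce the corollary directly from Theorem~\ref{thm:main_intro} by normalizing the character sum by the number of pairs of polynomials over which it is taken. Since the corollary is a purely asymptotic consequence of a quantitative bound, no genuinely new input is required beyond basic counting of square-free polynomials.

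The first step is to recall the classical count: the number of monic square-free polynomials of degree $k \geq 2$ over $\mathbb{F}_q$ equals $q^k - q^{k-1}$ (one verifies this from the identity $\sum_k q^k t^k = (\sum_k s_k t^k)(\sum_k q^k t^{2k})$, which uses unique factorization of any monic polynomial as $h^2 f$ with $f$ square-free). Hence for $n,m \geq 2$ the total number of pairs is at least $q^{n+m}(1-1/q)^2$. Dividing the quadratic bound of Theorem~\ref{thm:main_intro} by this count yields
\[\left|\frac{F_\chi(n,m,q)}{(q^n-q^{n-1})(q^m-q^{m-1})}\right| \;\leq\; \frac{2^{2n+2m-1}}{(1-1/q)^2(\sqrt{q}-1)}\cdot q^{\,1-\max(n,m)/2}.\]

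The second step, which is the only nontrivial piece of arithmetic, is to show that $4^{n+m}\,q^{-\max(n,m)/2} \to 0$ as $\max(n,m) \to \infty$, provided $q$ is sufficiently large. Using $\max(n,m) \geq (n+m)/2$, I obtain $4^{n+m} q^{-\max(n,m)/2} \leq (4q^{-1/4})^{n+m}$, which tends to $0$ as $n+m \to \infty$ whenever $q > 256$. Since ``either or both of $n,m$ grow indefinitely'' is equivalent to $\max(n,m) \to \infty$, hence to $n+m \to \infty$, this settles the quadratic case. The non-quadratic bound differs only by an extra factor of $q^{-1/2}$, which makes the decay strictly faster, so the same threshold on $q$ suffices.

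The main obstacle is really just the bookkeeping of choosing a single threshold on $q$ that simultaneously handles the regime where one degree is fixed while the other grows (which by itself only requires $q > 16$, since then $4^{n+m} q^{-m/2} = 4^n(4/\sqrt{q})^m \to 0$) and the balanced regime $n=m\to\infty$ (which needs $q > 256$). Taking $q > 256$ uniformly is harmless for a ``sufficiently large $q$'' statement and no homological input is needed beyond what Theorem~\ref{thm:main_intro} supplies.
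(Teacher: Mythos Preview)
Your proposal is correct and follows essentially the same approach as the paper: both normalize $|F_\chi(n,m,q)|$ by the count $q^{n+m}(1-1/q)^2$ of pairs of monic square-free polynomials, apply the bound of Theorem~\ref{thm:main_intro}, and use the inequality $\max(n,m)\ge (n+m)/2$ to show the resulting expression tends to zero once $q>256$. Your write-up is in fact slightly more careful about the arithmetic and the threshold on $q$ than the paper's discussion, but the argument is the same.
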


Our method follows a young program of applying topological studies of {\it configuration spaces} to problems in arithmetic statistics, via the Grothendieck-Lefschetz trace formula with twisted coefficients and various comparison theorems in \'{e}tale cohomology theory (see, e.g., \cite{cef_repstab,fww19}). Notable examples include work by Ellenberg--Venkatesh--Westerland on the Cohen-Lenstra conjecture over function fields \cite{evw1} and work by Bergstr\"{o}m--Diaconu--Petersen--Westerland on the asymptotics of moments of quadratic $L$-functions \cite{bdpw23}. In recent work that inspired the approach of this paper, Ellenberg--Tran--Westerland proved that the upper bound in the weak Malle’s conjecture over $\mathbb{F}_q(t)$ holds for all choices of the Galois group and all sufficiently large $q$, by producing a bound on the homology of configuration spaces with certain exponential coefficients \cite{etw17}.


\subsection{Outline of the argument}

Let $\mathrm{Conf}_{n,m}$ denote the space of pairs of monic square-free coprime polynomials of degrees $n$ and $m$. Observe that the sum $F_\chi(n,m,q)$ is the same as
\[ F_\chi(n,m,q) = \sum_{(f,g)\in \mathrm{Conf}_{n,m}(\mathbb{F}_q)} \chi(\mathcal{R}(f,g))\]
since $\mathcal{R}(f,g)=0$ whenever $f$ and $g$ share a common root. By pulling back the Kummer sheaf $\mathcal{L}_\chi$ associated to $\chi$ along the resultant map $\mathcal{R}:\mathrm{Conf}_{n,m}\to\mathbb{A}^1\setminus\{0\}$, we obtain a rank-$1$ local system $\mathcal{R}^*\mathcal{L}_\chi$ on $\mathrm{Conf}_{n,m}$ with the property that $\mathrm{tr}(\mathrm{Frob}_q|(\mathcal{R}^*\mathcal{L}_{\chi})_{(f,g)}) = \chi(\mathcal{R}(f,g))$ for all $(f,g) \in \mathrm{Conf}_{n,m}$. By the Grothendieck-Lefschetz trace formula with twisted coefficients and Artin's comparison theorem, $F_\chi(n,m,q)$ can then be approached by studying the cohomology of the {\it bicolor configuration space} $\mathrm{Conf}_{n,m}(\mathbb{C})$, the space of configurations of $n$ blue and $m$ red points on the plane, with coefficients in $\mathcal{R}^*\mathcal{L}_\chi$. This viewpoint was first introduced by Ellenberg and Shusterman when $\chi$ is the quadratic character \cite{ellenberg-birs}. The bulk of this paper is dedicated to proving a vanishing range for these cohomology groups, which will produce the upper bounds on $F_\chi(n,m,q)$ in Theorem~\ref{thm:main_intro}.

Recall that the $n^{\mathrm{th}}$ \textit{(unordered) configuration space} of a topological space $X$ is defined to be
\[\mathrm{Conf}_n(X) := \{ \{ x_1,\dots,x_n \} \subset X : x_i \ne x_j \text{ if } i \ne j \}.\]
To study the homology of the bicolor configuration space $\mathrm{Conf}_{n,m}(\mathbb{C})$, by passing through a fiber sequence of Fadell--Neuwirth \cite{fadneu62} and the Lyndon-Hochschild-Serre spectral sequence, it suffices to study that of the $n^\mathrm{th}$ configuration space of the plane with $m$ punctures, $\mathrm{Conf}_n(\mathbb{C}_m)$. Sections~\ref{sec:fnf} and~\ref{subsec:fnf_twisted} will focus on constructing a cellular stratification for $\mathrm{Conf}_n(\mathbb{C}_m)$ and developing a framework for computing their cellular homology with coefficients in arbitrary local systems. Our construction extends a similar framework for $\mathrm{Conf}_n(\mathbb{C}_1)$ given by \cite{h22}, which is based on the classical Fox-Neuwirth cellular stratification of $\mathrm{Conf}_n(\mathbb{C})$ established by \cite{fn62,fuk70} and a twisted cellular chain complex given by \cite{etw17}.

In Section~\ref{sec:rep_br}, we will recall several concepts in quantum algebra and develop a representation theory of braid subgroups that are necessary to state our main topological results. Recall that a \textit{braided vector space} over a field $\mathbf{k}$ is a finite dimensional vector space $V$ equipped with an automorphism $\sigma: V \otimes V \to V \otimes V$ that satisfies the braid equation
\[(\sigma \otimes \mathrm{id}) \circ (\mathrm{id} \otimes \sigma) \circ (\sigma \otimes \mathrm{id}) = (\mathrm{id} \otimes \sigma) \circ (\sigma \otimes \mathrm{id}) \circ (\mathrm{id} \otimes \sigma)\]
on $V^{\otimes 3}$. There is a natural action of the braid group $B_n$ on $V^{\otimes n}$. The \textit{quantum shuffle algebra} $\mathfrak{A}(V)$ is a braided, graded Hopf algebra whose underlying coalgebra is the cofree coalgebra on $V$ and multiplication is given by a shuffle product involving the braiding $\sigma$ (Definition~\ref{defn:qsa}). Notably, shuffle algebras have traditionally been connected to the homology of configuration spaces with coefficients in local systems; see, e.g., \cite{fuk70,vai78,mar96,cal06,ks20,etw17}.

Given a pair of braided vector spaces $(V,W)$, under some fairly relaxed conditions (Definitions~\ref{defn:mix_bvs} and~\ref{defn:sep_mbvs}), there is an action of the {\it mixed braid group} $B_{n,m} := \pi_1(\mathrm{Conf}_{n,m}(\mathbb{C}))$ on $V^{\otimes n}\otimes W^{\otimes m}$, which restricts to an action of the surface braid group $B_n(\mathbb{C}_m):=\pi_1(\mathrm{Conf}_n(\mathbb{C}_m))$. In \cite{h22}, we defined a bimodule $\mathfrak{M}$ over the quantum shuffle algebra $\mathfrak{A}(V)$ by
\[ \mathfrak{M} = \displaystyle \bigoplus_{q \ge 1} \bigoplus_{0 \le j \le q-1} V^{\otimes j} \otimes W \otimes V^{\otimes q-j-1} \]
with multiplication resembling the quantum shuffle product (Definition~\ref{defn:M}) and related its cohomology to the homology of $\mathrm{Conf}_n(\mathbb{C}_1)$ with coefficients in the local system associated to the $B_n(\mathbb{C}_1)$-representation on $V^{\otimes n}\otimes W$. Our main topological theorem extends this result, expressing the (co)homology of $\mathrm{Conf}_n(\mathbb{C}_m)$ with coefficients in the local system associated to the $B_n(\mathbb{C}_m)$-representation on $V^{\otimes n}\otimes W^{\otimes m}$ as the (co)homology of bimodules $\mathfrak{M}$ defined over the quantum shuffle algebra $\mathfrak{A} = \mathfrak{A}(V_\epsilon)$, where $V_\epsilon$ is the braided vector space $V$ with the braiding twisted by a sign.

\begin{thm}
Let $\mathcal{F}$ be the local system over $\mathrm{Conf}_n(\mathbb{C}_m)$ associated with the $B_n(\mathbb{C}_m)$-representation on $V^{\otimes n} \otimes W^{\otimes m}$, $\mathfrak{A}^e = \mathfrak{A} \otimes \mathfrak{A}^{op}$ be the enveloping algebra of $\mathfrak{A}$, and $\mathfrak{M}_i = \mathfrak{M}$ for all $1 \le i \le m$. Then there is an isomorphism
\[ H_*(\mathrm{Conf}_n(\mathbb{C}_m) \cup \{ \infty \}, \{ \infty \}; \mathcal{F}) \cong \left\{ \left( \left( \dots\left( \left(\mathfrak{M}_1 \underset{\mathfrak{A}}{\overset{L}{\otimes}} \mathfrak{M}_2 \right) \underset{\mathfrak{A}}{\overset{L}{\otimes}} \mathfrak{M}_3 \right) \dots \right) \underset{\mathfrak{A}}{\overset{L}{\otimes}} \mathfrak{M}_m \right) \underset{\mathfrak{A}^e}{\overset{L}{\otimes}} \mathbf{k} \right\}[-n][n+m] \]
where the first bracket denotes the shift in homological degree and the second the internal degree part.
\end{thm}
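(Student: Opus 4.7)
The plan is to construct a quasi-isomorphism between the twisted Fox-Neuwirth cellular chain complex of $\mathrm{Conf}_n(\mathbb{C}_m)\cup\{\infty\}$ relative to $\{\infty\}$ with coefficients in $\mathcal{F}$, and a normalized iterated bar model computing the derived tensor product on the right-hand side. Using the stratification constructed in Section~\ref{sec:fnf} and its twisted cellular chain complex developed in Section~\ref{subsec:fnf_twisted}, each top-dimensional cell of $\mathrm{Conf}_n(\mathbb{C}_m)$ is determined by a sequence that records the interleaving of the $n$ configuration points with the $m$ fixed punctures along the real axis together with, inside each horizontal slot, the vertical coincidence pattern among the configuration points. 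With the puncture order fixed, this identifies the underlying graded vector space of the twisted cellular chains with
\[\bigoplus_{p_0+p_1+\dots+p_m=n} V^{\otimes p_0} \otimes (W\otimes V^{\otimes p_1}) \otimes \cdots \otimes (W\otimes V^{\otimes p_m}),\]
where the leading factor (points to the left of the first puncture) and trailing $V$-tensors (points to the right of the last puncture) reassemble into the outer $\mathfrak{A}$-bimodule structure, while each $W\otimes V^{\otimes p_i}$ together with its neighboring $V$-tensors will be recognized as summands of $\mathfrak{M}_i$.

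Next I would identify the cellular differential with the bar differential for the iterated derived tensor product. The boundary of a Fox-Neuwirth cell decomposes into two kinds of codimension-one degenerations: (a) a vertical collision between two configuration points adjacent in the same slot, which applies the braiding of $V$ twisted by a sign (producing $V_\epsilon$ exactly as in \cite{h22}); and (b) a configuration point crossing over a puncture, which by the mixed-braid action built in Section~\ref{sec:rep_br} is precisely the left or right $\mathfrak{A}$-action on the adjacent $\mathfrak{M}_i$ coming from Definition~\ref{defn:M}. Collisions of type (a) localized inside the $i$-th slot assemble into the bar differential of a standard bar resolution of $\mathfrak{M}_i$ as an $\mathfrak{A}$-bimodule, while degenerations of type (b) between slots $i$ and $i+1$ realize the face maps that identify the right $\mathfrak{A}$-action on $\mathfrak{M}_i$ with the left $\mathfrak{A}$-action on $\mathfrak{M}_{i+1}$, i.e.\ the simplicial structure defining $\mathfrak{M}_i\otimes^L_\mathfrak{A}\mathfrak{M}_{i+1}$.

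The relative quotient by the added point $\{\infty\}$ then corresponds to collapsing the unbounded outer slots. Without the quotient the complex would exhibit an extra free factor of $\mathfrak{A}$ — coming from the two outer slots — sitting as an $\mathfrak{A}^e$-module on the ends of the iterated tensor; killing this by passing to the pair $(\mathrm{Conf}_n(\mathbb{C}_m)\cup\{\infty\},\{\infty\})$ replaces it with $\mathbf{k}$ over $\mathfrak{A}^e$, producing the final $-\otimes^L_{\mathfrak{A}^e}\mathbf{k}$ in the statement. The degree shifts $[-n][n+m]$ fall out by counting the real dimension of a generic Fox-Neuwirth cell: each configuration point and each puncture contributes a horizontal coordinate (giving the internal grading $n+m$), while each configuration point that does not coincide vertically with a left neighbor contributes a vertical coordinate to the homological degree, exactly matching the $m=1$ computation.

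The principal obstacle I anticipate is the careful sign and braiding bookkeeping required to match the cellular boundary with the bar differential. One must verify that the twisted braiding $V_\epsilon$ rather than $V$ appears, that the left and right $\mathfrak{A}$-actions induced by points crossing punctures agree on the nose with the bimodule structure of $\mathfrak{M}$ from Definition~\ref{defn:M}, and that the two families of degenerations anti-commute so that the total complex genuinely realizes the double bar complex and not some spurious variant. A secondary delicate point is proving that the one-point compactification implements precisely $-\otimes^L_{\mathfrak{A}^e}\mathbf{k}$ and not a cyclic closure; once the identification of the outer slots with the $\mathfrak{A}^e$-end is made explicit, this should reduce to an acyclicity argument for the outer bar resolution, extending the single-puncture case of \cite{h22}.
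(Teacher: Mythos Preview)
Your overall strategy matches the paper's: identify the twisted Fox--Neuwirth cellular complex with an iterated bar-type complex. The paper factors this into three pieces: (i) the twisted cellular complex $D(n,m)_*\otimes L$ computes the relative homology (Theorem~\ref{thm:hom_compactification}); (ii) this is isomorphic to an explicit complex $F_{*,n+m}(\mathcal{M},\mathfrak{I})$ whose degree-$q$ piece is a sum of $q$-fold tensor products of factors drawn from the augmentation ideal $\mathfrak{I}\subset\mathfrak{A}$ and from $\mathfrak{M}$ (Proposition~\ref{prop:F=D}, which is precisely where left-separability and the identity of Proposition~\ref{prop:comm_diag_cond} enter, via the commutativity of Diagram~(\ref{eq:comm_diag}), to match the braid actions --- this is the content of your anticipated ``sign and braiding bookkeeping''); and (iii) the homology of $F_*$ is computed by embedding it into a multi-module Hochschild complex and recognizing the image as the totalization of an $m$-simplicial object whose filtrations in each direction are reduced bar complexes (Theorem~\ref{thm:hom_F}). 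Note also that your displayed direct sum records only the top-dimensional cells; the homological grading comes from the number of columns, i.e.\ the number of tensor factors in $F_q$, not just the interleaving pattern.

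Your third paragraph, however, contains a genuine misconception. The one-point compactification does not ``collapse the outer slots'' or kill any $\mathfrak{A}$-factor: every Fox--Neuwirth cell, including those with points to the left of the first puncture or right of the last, survives in the relative cellular complex, and these outer slots contribute factors of $\mathfrak{I}$ in $F_*(\mathcal{M},\mathfrak{I})$ on exactly the same footing as the inner ones. The $-\otimes^L_{\mathfrak{A}^e}\mathbf{k}$ arises purely in the algebraic step (iii): the paper embeds $F_*(\mathcal{M},\mathfrak{I})$ into a Hochschild-type complex $CH_*(A,\mathcal{M})$ by cyclically rotating $\mathfrak{M}_1$ to the front and inserting a single unit $1\in\mathfrak{A}_0=\mathbf{k}$ at the seam where the two outer slots used to meet. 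It is this inserted $\mathbf{k}$, not the compactification, that breaks the potential cyclic closure and yields $\mathrm{Tor}^{\mathfrak{A}^e}(-,\mathbf{k})$ rather than Hochschild homology. So your worry about ``cyclic closure versus $\otimes^L_{\mathfrak{A}^e}\mathbf{k}$'' is exactly the right concern, but its resolution is this algebraic device and has nothing to do with the topology of the compactification.
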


This result is proved as Corollary~\ref{cor:tor_n_times}. Section~\ref{sec:H*_conf} concludes with the main application of this theorem in the case relevant to character sums of the resultant: when $V=W=\mathbb{C}$ with the braidings given by multiplication by certain units, the $B_{n,m}$-representation on $V^{\otimes n} \otimes W^{\otimes m}$ defines the rank-$1$ local system $\mathcal{R}^*\mathcal{L}_\chi$ on $\mathrm{Conf}_{n,m}(\mathbb{C})$. In this case, we completely compute the homology of $\mathrm{Conf}_n(\mathbb{C}_m)$ (Theorems~\ref{thm:H*_punct_p=-q} and~\ref{thm:H*_punct_p_not_power}) and give a vanishing range for the homology of $\mathrm{Conf}_{n,m}(\mathbb{C})$ (Corollaries~\ref{cor:H*_mix_p=-q} and~\ref{cor:H*_mix_p_not_power}). Section~\ref{sec:char_sum} then completes the proof of Theorem~\ref{thm:main_intro} by translating our topological results to number theory using the machinery sketched at the beginning of the outline.


\subsection{Acknowledgments}

The author is indebted to Craig Westerland for numerous insights and valuable conversations. We owe many thanks to Jordan Ellenberg and Mark Shusterman for suggesting the paper's topological approach to study character sums of the resultant. We thank Calista Bernard, Hyun Jong Kim, Aaron Landesman, Jeremy Miller, Andrew Putman, Philip Tosteson, and Jesse Wolfson for various helpful discussions.


\section{Stratification of configuration spaces}\label{sec:fnf}

For any positive integer $m$, define $\mathbb{C}_m := \mathbb{C} \backslash \{ z_1, \dots, z_m \}$ to be the complex plane punctured by $m$ distinct points $z_1, \dots, z_m \in \mathbb{C}$. Without lost of generality, assume $z_k = k-1$ for all integers $1 \le k \le m$, i.e., the punctures in $\mathbb{C}_m$ are placed at the first $m$ non-negative integers on the real axis; in particular, $\mathbb{C}_1 = \mathbb{C}^\times$ is the complex plane punctured at the origin. In this section, we will construct a cellular stratification for configuration spaces of the $m$-punctured complex plane for any $m \ge 1$. We will first review the classical Fox-Neuwirth cellular stratification for $\mathrm{Conf}_n(\mathbb{C})$ (case $m = 0$) established by \cite{fn62,fuk70}. Our construction will generalize a stratification of $\mathrm{Conf}_n(\mathbb{C}_1)$ demonstrated by \cite{h22}.


\subsection{Classical Fox-Neuwirth cellular stratification}

We first recall the Fox-Neuwirth/Fuks stratification of Conf$_n(\mathbb{C})$ by Euclidean spaces, which provides a CW-complex structure for the 1-point compactification of Conf$_n (\mathbb{C})$. This construction was first demonstrated by \cite{fn62,fuk70} and further studied by \cite{vas92,gs12,etw17}; the treatment detailed here is summarized from \cite{etw17}.

A \textit{composition} $\lambda$ of $n$ (denoted by $\lambda \vdash n$) is an ordered partition $\lambda = (\lambda_1, \dots, \lambda_k)$ of $n$ where $\lambda_i > 0$ for all $1\le i\le k$ and $\sum \lambda_i = n$. The number of parts $k$ is called the \textit{length} of $\lambda$, denoted by $l(\lambda)$. Recall that the $n^\mathrm{th}$ symmetric product Sym$_n(\mathbb{R})$ has a stratification given by these partitions Sym$_n (\mathbb{R}) = \coprod_{\lambda \vdash n} \text{Sym}_\lambda (\mathbb{R})$, where elements of Sym$_\lambda (\mathbb{R})$ are unordered subsets of $l(\lambda)$ distinct points $x_1, \dots, x_{l(\lambda)}$, with the multiplicity of $x_i$ being $\lambda_i$.

A positive dimension cell Conf$_\lambda(\mathbb{C})$ (indexed by a composition $\lambda$ of $n$) of dimension $n+l(\lambda)$ is defined to be the preimage of Sym$_\lambda(\mathbb{R})$ under the projection map $\pi : \text{Conf}_n(\mathbb{C}) \to \text{Sym}_n(\mathbb{R})$ that projects each coordinate onto the real line, i.e., $\pi (x_1, \dots, x_n) = (\text{Re}(x_1), \dots, \text{Re}(x_n))$.
Loosely speaking, given a composition $\lambda$ of $n$, the cell Conf$_\lambda(\mathbb{C})$ consists of all configurations in $\mathrm{Conf}_n(\mathbb{C})$ where points are arranged into $l(\lambda)$ vertical columns and there are $\lambda_i$ points on the $i^\mathrm{th}$ column (from the left) for all $1 \le i \le l(\lambda)$ (see Figure~\ref{fig:FNF}). The \textit{lexicographic order} of points in such a configuration is defined by labelling the lowest point on the left most column with $1$, increasing the indices as we move up, and continuing the process for all subsequent columns on the right. The index of a point in this order is called the \textit{overall position} of that point in the configuration.

\begin{figure}[t]
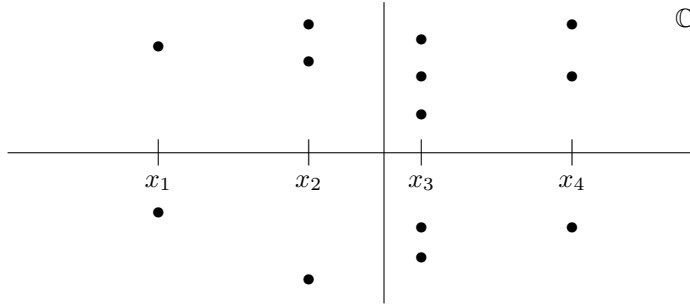

\begin{centering}
\[ \vcenter{	
	\xy
		(-30,14)*{\bullet}; (-30,-8)*{\bullet}; (-30,-4)*{x_1}; (-30,0)*{|};
		(-10,12)*{\bullet}; (-10,-17)*{\bullet}; (-10,17)*{\bullet}; (-10,-4)*{x_2}; (-10,0)*+{|};
		(5,10)*{\bullet}; (5,-10)*{\bullet}; (5,15)*{\bullet}; (5,-14)*{\bullet}; (5,5)*{\bullet}; (5,-4)*{x_3}; (5,0)*+{|};
		(25,10)*{\bullet}; (25,-10)*{\bullet}; (25,17)*{\bullet}; (25,-4)*{x_4}; (25,0)*+{|};
		{\ar@{-} (-50,0)*{}; (50,0)*{}};
		{\ar@{-} (0,-20)*{}; (0,20)*{}};
		(40,18)*{ \mathbb{C}};
	\endxy
	}
\]
	\caption{A configuration in Conf$_{(2,3,5,3)} (\mathbb{C}) \subset$ Conf$_{13} (\mathbb{C})$. The points in the configuration are arranged into four columns based on their (ordered) real coordinates, and the number of points on each column (starting from the left) is specified by the composition $(2,3,5,3)$.}
	\label{fig:FNF}
\end{centering}
\end{figure}

The boundary of a cell is obtained in two ways. The first type of boundary occurs by moving a point in a configuration to approach either the point at infinity or another point on the same vertical line; in this case, the boundary is the point at infinity. The second type of boundary occurs by horizontally joining two adjacent vertical columns of the configuration without colliding the points. The boundary cell of Conf$_\lambda (\mathbb{C})$ obtained by joining the $i^\mathrm{th}$ and $i+1^\mathrm{st}$ columns has the form Conf$_{\rho^i}(\mathbb{C})$, where $\rho^i = (\lambda_1, \dots, \lambda_{i-1}, \lambda_i + \lambda_{i+1}, \dots, \lambda_k)$ is the \textit{coarsening} of $\lambda$ obtained by summing $\lambda_i$ and $\lambda_{i+1}$ $(1 \le i < l(\lambda)).$

\begin{prop}[\cite{fn62,fuk70}]
The space $\mathrm{Conf}_n (\mathbb{C}) \cup \{ \infty \}$ has a CW-complex decomposition where the positive dimension cells are given by $\mathrm{Conf}_\lambda(\mathbb{C})$ (of dimension $n+l(\lambda)$) with indices $\lambda$ coming from compositions of $n$. The boundary of $\mathrm{Conf}_\lambda (\mathbb{C})$ is the union of $\mathrm{Conf}_\rho(\mathbb{C})$ where $\rho$ is a coarsening of $\lambda$. 
\end{prop}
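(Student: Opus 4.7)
The plan is to exhibit, for each composition $\lambda = (\lambda_1, \dots, \lambda_k) \vdash n$, an explicit characteristic map $\Phi_\lambda : D^{n+l(\lambda)} \to \mathrm{Conf}_n(\mathbb{C}) \cup \{\infty\}$ which restricts to a homeomorphism from the open disk onto $\mathrm{Conf}_\lambda(\mathbb{C})$ and carries the boundary sphere into the union of $\{\infty\}$ with the lower-dimensional strata indexed by coarsenings of $\lambda$. The first step is to identify $\mathrm{Conf}_\lambda(\mathbb{C})$ with an open Euclidean ball: a configuration in it is uniquely determined by the real parts $x_1 < \dots < x_k$ of its $k = l(\lambda)$ columns together with, for each column $i$, the imaginary parts $y_{i,1} < \dots < y_{i,\lambda_i}$. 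This exhibits the cell as a product of open simplices, hence as $\mathbb{R}^{n+l(\lambda)}$. Since the real-part structure of a configuration reads off $\lambda$ canonically, the cells $\{\mathrm{Conf}_\lambda(\mathbb{C})\}_{\lambda \vdash n}$ partition $\mathrm{Conf}_n(\mathbb{C})$.

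To construct $\Phi_\lambda$, I would reparameterize the cell using the position of the leftmost column $x_1 \in \mathbb{R}$, the $k-1$ inter-column gaps $b_i = x_{i+1}-x_i \in (0,\infty)$, and analogous base-and-gap coordinates within each column; then compactify each base coordinate by $\tanh : \mathbb{R} \to (-1,1)$ and each gap coordinate by $x \mapsto x/(1+x) : (0,\infty) \to (0,1)$. This produces a homeomorphism from the open cell onto the interior of a product of closed intervals, which is homeomorphic to $\mathrm{int}\, D^{n+l(\lambda)}$. The extension of $\Phi_\lambda$ to the boundary is then forced by declaring limit configurations: when only some inter-column gaps $b_i$ collapse to $0$ and no imaginary parts across the merging columns coincide, the limit lies in the coarsening cell $\mathrm{Conf}_{\rho^i}(\mathbb{C})$; any other boundary behavior (a base coordinate tending to $\pm\infty$, an intra-column gap tending to $0$ or $\infty$, an inter-column gap tending to $\infty$, or a cross-column imaginary-part collision accompanying a gap closure) sends the point to $\infty$.

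What remains is to verify that $\Phi_\lambda$ is continuous, that its image is $\overline{\mathrm{Conf}_\lambda(\mathbb{C})}$ inside the one-point compactification, and that $\Phi_\lambda(\partial D^{n+l(\lambda)}) \subseteq \{\infty\} \cup \bigcup_\rho \mathrm{Conf}_\rho(\mathbb{C})$, where $\rho$ ranges over coarsenings of $\lambda$ and each such $\rho$ has $l(\rho) = l(\lambda)-1$, placing it in the $(n+l(\lambda)-1)$-skeleton. Continuity on the interior is tautological, while continuity at boundary points reduces to the defining property of the one-point compactification: any sequence in $\mathrm{Conf}_n(\mathbb{C})$ that eventually leaves every compact set converges to $\infty$. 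Closure-finiteness and the weak topology are automatic because the resulting complex has only finitely many cells. The main obstacle lies in the boundary analysis at parameter-cube points where several degenerations occur at once—most delicately, the case where an inter-column gap $b_i \to 0$ is accompanied by a collision of imaginary parts between the merging columns. The resolution is that any such sequence necessarily exits every compact subset of $\mathrm{Conf}_n(\mathbb{C})$, so it converges to $\infty$ in the compactification, matching the assignment made by $\Phi_\lambda$ and confirming continuity on this portion of the boundary.
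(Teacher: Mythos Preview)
The paper does not give its own proof of this proposition: it is stated with attribution to \cite{fn62,fuk70} and preceded only by an informal description of the cells and their two types of boundary behavior (points escaping to $\infty$ or colliding on a column, versus adjacent columns merging). Your proposal is a correct fleshing-out of exactly this classical argument, supplying the explicit base-and-gap reparameterization and compactification that the paper leaves to the references.

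One small correction: your claim that $\Phi_\lambda(\partial D^{n+l(\lambda)}) \subseteq \{\infty\} \cup \bigcup_\rho \mathrm{Conf}_\rho(\mathbb{C})$ with each $\rho$ satisfying $l(\rho) = l(\lambda)-1$ is too restrictive. On faces of the parameter cube where several inter-column gaps $b_i$ vanish simultaneously, the limit configuration lies in a coarsening $\rho$ with $l(\rho) < l(\lambda)-1$. This does not harm the CW structure---such $\rho$ still index cells in the $(n+l(\lambda)-1)$-skeleton---but you should state that $\rho$ ranges over \emph{all} proper coarsenings of $\lambda$, matching the proposition as written. Your boundary analysis (gap closures without cross-column imaginary collisions land in coarsening cells; everything else escapes to $\infty$) already handles this case once the restriction on $l(\rho)$ is dropped.
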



\subsection{Cellular stratification of $\mathrm{Conf}_n (\mathbb{C}_m)$}\label{subsec:fnf_Cm}

For any integer $m \ge 1$, observe that there is a canonical embedding $\mathrm{Conf}_n (\mathbb{C}_m) \hookrightarrow \mathrm{Conf}_{n+m}(\mathbb{C})$ by inserting the previously removed points $z_1, \dots, z_m$. This gives a homeomorphic image of Conf$_n(\mathbb{C}_m)$ as a subspace of Conf$_{n+m} (\mathbb{C})$ consisting of all configurations where the points $z_1, \dots, z_m$ are always fixed. We will give a stratification of this subspace based on the Fox-Neuwirth cellular stratification of Conf$_{n+m} (\mathbb{C})$ introduced above. For the rest of this paper, we will indiscriminately use the notation Conf$_n (\mathbb{C}_m)$ for both the original configuration space of the $m$-punctured complex plane and its homeomorphic image embedded in Conf$_{n+m} (\mathbb{C})$.

Given a composition $\lambda$ of $n+m$, we consider the intersection of the cell Conf$_{\lambda}(\mathbb{C})$ and the subspace Conf$_n (\mathbb{C}_m)$ of Conf$_{n+m}(\mathbb{C})$. Starting with a configuration in Conf$_{\lambda}(\mathbb{C})$, the restriction on the fixed points $z_1, \dots, z_m$ results in two constraints. First, for all $1 \le k \le m$, the vertical column that contains $z_k$ (indexed by $i_k$) must be fixed, i.e., the real part of all points on that column must be $z_k = k-1$; we refer to these columns as the \textit{fixed columns} in the cell, and others the \textit{free columns}. Since the fixed points all have distinct real parts which keeps the fixed columns separate, the number of vertical columns in a cell must be at least $m$. Secondly, for every $1 \le k \le m$, it is forbidden for points on the $k^\mathrm{th}$ fixed column ($i_k^\mathrm{th}$ overall) to move past the fixed point $z_k$. The number of points on this vertical line with a negative imaginary part is hence fixed and denoted by the index $j_k$. Therefore, the connected components in the above intersection can be denoted by $e_{(\lambda, I, J)} = \text{Conf}_{(\lambda, I, J)} (\mathbb{C})$ where $\lambda$ is a composition of $n+m$, $I = (i_1,\dots,i_m)$ is the $m$-tuple of indices of the fixed columns ($1 \le i_1 < \dots < i_m \le l(\lambda)$), and $J = ( j_1, \dots, j_m )$ where $j_k$ is the number of points lying below $z_k$ on the $i_k^\mathrm{th}$ column ($0 \le j_k \le \lambda_{i_k} - 1$). Given a composition $\lambda$, the overall position $\iota$ of a point $z$ in a cell indexed by $\lambda$ is the same information as the pair of indices $(i,j)$ where $i$ indexes the column containing $z$ and $j$ indexes the number of points lying below $z$ on that column, via the identification $\iota = j+1+\sum_{k=1}^{i-1} \lambda_k$. Hence it is possible to re-index the subspace $e_{(\lambda,I,J)}$ by the composition $\lambda$ and a tuple $\mathcal{I} = (\iota_1,\dots,\iota_m)$ that contains the overall positions of the fixed points $z_1,\dots,z_m$ (see Figure~\ref{fig:FNF_punctured}).

\begin{figure}[t]
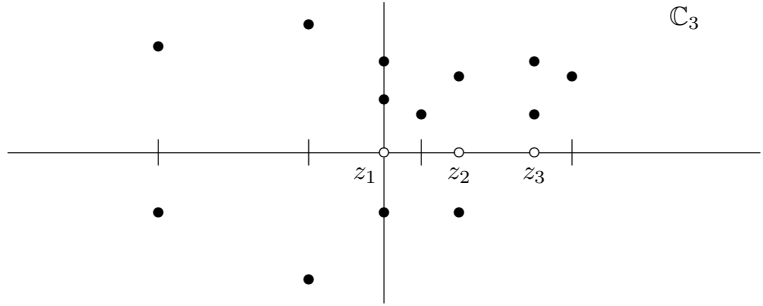

\begin{centering}
\[ \vcenter{	
	\xy
		(-30,14)*{\bullet}; (-30,-8)*{\bullet}; (-30,0)*{|};
		(-10,-17)*{\bullet}; (-10,17)*{\bullet}; (-10,0)*+{|};
		(0,7)*{\bullet}; (0,12)*{\bullet}; (0,-8)*{\bullet}; (0,-0.08)*{\circ}; (-2.5,-3)*{z_1};
		(5,5)*{\bullet}; (5,0)*+{|};
		(10,10)*{\bullet}; (10,-8)*{\bullet}; (10,-0.08)*{\circ}; (10,-3)*{z_2};
		(20,5)*{\bullet}; (20,12)*{\bullet}; (20,-0.08)*{\circ}; (20,-3)*{z_3};
		(25,10)*{\bullet}; (25,0)*+{|};
		{\ar@{-} (-50,0)*{}; (-0.65,0)*{}}; {\ar@{-} (0.65,0)*{}; (9.35,0)*{}}; {\ar@{-} (10.65,0)*{}; (19.35,0)*{}}; {\ar@{-} (20.65,0)*{}; (50,0)*{}};
		{\ar@{-} (0,-20)*{}; (0,-0.65)*{}}; {\ar@{-} (0,0.65)*{}; (0,20)*{}};
		(40,18)*{ \mathbb{C}_{3}};
	\endxy
	}
\]
	\caption{A configuration in $e_{(\lambda,I,J)} \subset \mathrm{Conf}_{13} (\mathbb{C}_3)$, where $\lambda = (2,2,4,1,3,3,1)$, $I = (3, 5, 6)$, and $J = (1,1,0)$. Alternatively, this cell can be indexed by $\lambda$ and the tuple $\mathcal{I} = (6,11,13)$.}
	\label{fig:FNF_punctured}
\end{centering}
\end{figure}

The spaces $e_{(\lambda,I,J)}$ then provide the positive dimension cells for our cellular decomposition of Conf$_n(\mathbb{C}_m) \cup \{ \infty \}$. Each cell $e_{(\lambda,I,J)}$ is homeomorphic to the product
\[\begin{array}{l}
\left[ \mathrm{Conf}_{i_1-1}((-\infty,0)) \times \left( \displaystyle \prod^{m-1}_{k=1} \mathrm{Conf}_{i_{k+1} - i_k -1}((k-1,k)) \right)
\times \mathrm{Conf}_{l(\lambda)-i_m}((m-1,\infty)) \right] \\[15pt]
\quad \times \displaystyle \prod^{l(\lambda)}_{i=1,i\not\in I} \mathrm{Conf}_{\lambda_i} (\mathbb{R}) \times \left[ \displaystyle \prod^{m}_{k=1} \mathrm{Conf}_{j_k} ((-\infty,0)) \times \mathrm{Conf}_{\lambda_{i_k} - j_k -1} ((0,\infty)) \right].
\end{array}\]
The first bracket represents the configurations of the free columns before the first fixed column, between each pair of fixed columns, and after the last fixed column, i.e., recording the real parts of the points. The last bracket keeps track of the imaginary parts of points below and above the fixed point on each fixed column, while the middle product records the same information for those on the free columns. By identifying each open interval with $\mathbb{R}$ and applying the homeomorphism Conf$_k(\mathbb{R}) \cong \mathbb{R}^k$, we see that the cell $e_{(\lambda,I,J)}$ has dimension $n+l(\lambda)-m$; loosely speaking, compared to the classical Fox-Neuwirth cell indexed by the same composition $\lambda$, we lost $2m$ dimensions due to fixing the real and imaginary parts of $m$ points.

As in the classical Fox-Neuwirth cellular decomposition of Conf$_n(\mathbb{C})$, the boundary of a cell in this stratification is obtained in two ways. For the first type, we let points in a configuration approach other (free or fixed) points or infinity; in this case, the boundary is still the point at infinity. The second type of boundary again occurs by horizontally joining two adjacent vertical columns of the configuration without colliding the points. Note that the fixed columns are not allowed to merge with one another. Due to the second constraint of the cell, namely points on a fixed column cannot move across the fixed point in that column, the boundary cells obtained this way have five general forms, depending on the types of columns (free or fixed) involved in the column combination and their relative positions. In particular, when combining a free column with the $k^\mathrm{th}$ fixed column, we must keep track of the number of points going below the fixed point, i.e., adding to the index $j_k$; in the alternate indexing system, this results in a change of the overall position of the fixed point $z_k$. In summary:

\begin{prop}
The space $\mathrm{Conf}_n(\mathbb{C}_m) \cup \{ \infty \}$ may be presented as a CW complex whose positive dimension cells $e_{(\lambda, I, J)} = \mathrm{Conf}_{(\lambda, I, J)}(\mathbb{C})$ (of dimension $n+l(\lambda)-m$) are indexed by triples $(\lambda,I,J)$, where $\lambda$ is an ordered partition of $n+m$, $I = (i_1,\dots,i_m)$ is the $m$-tuple of indices of the fixed columns $(1 \le i_1 < \dots < i_m \le l(\lambda))$, and $J = ( j_1, \dots, j_m )$ where $j_k$ is the number of points on the $i_k^\mathrm{th}$ column with negative imaginary parts $(0 \le j_k \le \lambda_{i_k} - 1)$.

Let $I_k$ denote the $m$-tuple $(i_1, \dots, i_{k-1}, i_k-1,\dots,i_m-1)$, and $J_{k,h}$ denote $(j_1,\dots,j_k+h,\dots,j_m)$. The codimension-$1$ boundary cells of $e_{(\lambda, I, J)}$ have five general forms:
\begin{enumerate}
\item $e_{(\rho^i, I_1, J)}$ \hfill $1 \le i < i_1-1$
\item $e_{(\rho^i, I_{k+1}, J)}$ \hfill $1 \le k \le m-1$, $i_k < i < i_{k+1}-1$
\item $e_{(\rho^i, I, J)}$ \hfill $i_k < i < l(\lambda)$
\item $e_{(\rho^{i_k-1}, I_k, J_{k,h})}$ \hfill $1 \le k \le m$, $0 \le h \le \lambda_{i_k-1}$
\item $e_{(\rho^{i_k}, I_{k+1}, J_{k,h})}$ \hfill $1 \le k \le m$, $0 \le h \le \lambda_{i_k+1}$
\end{enumerate}
where $\rho^i = (\lambda_1, \dots, \lambda_i + \lambda_{i+1}, \dots, \lambda_{l(\lambda)})$ is the coarsening of $\lambda$ obtained by summing $\lambda_i$ and $\lambda_{i+1}$ $(1 \le i < l(\lambda))$, and $h$ denotes the number of points going below the fixed point when combining a fixed column with the free column on its left (4) or right (5).
\end{prop}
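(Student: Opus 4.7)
The plan is to derive this decomposition from the classical Fox--Neuwirth stratification of $\mathrm{Conf}_{n+m}(\mathbb{C})$ by restriction along the embedding $\mathrm{Conf}_n(\mathbb{C}_m) \hookrightarrow \mathrm{Conf}_{n+m}(\mathbb{C})$. For each composition $\lambda$ of $n+m$ I would intersect $\mathrm{Conf}_\lambda(\mathbb{C})$ with the locus where the $m$ punctures $z_k = k-1$ are inserted. Because the $z_k$ have distinct real parts, such an intersection splits into connected components recording which columns contain the punctures and how many points of each such column sit below its puncture; these two pieces of data are exactly $I$ and $J$, so the components are precisely the spaces $e_{(\lambda,I,J)}$. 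The product description preceding the statement then identifies each component with a Euclidean space of dimension $n + l(\lambda) - m$, which follows by adding the contributions $l(\lambda) - m$ (real parts on the free columns), $(n+m) - \sum_k \lambda_{i_k}$ (imaginary parts on the free columns), and $\sum_k \lambda_{i_k} - m$ (imaginary parts above and below punctures on the fixed columns).

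Next I would assemble these cells into a CW structure on $\mathrm{Conf}_n(\mathbb{C}_m)\cup\{\infty\}$. The characteristic maps are inherited from those of the classical Fox--Neuwirth structure: the closure of each cell is covered by a product of closed disks, and its boundary can only arise from (a) points escaping to infinity, (b) a vertical collision of two points on the same column, including collisions with a puncture, or (c) the horizontal coalescence of two adjacent columns. The first two situations are absorbed into the compactification point $\{\infty\}$ exactly as in the unpunctured case, so the only content remaining is the boundary strata coming from (c), which is what the five-case classification captures.

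The heart of the proof is the case analysis of horizontal column merges. Two fixed columns have real parts $k-1 \ne k'-1$ and so can never merge, leaving only free--free and free--fixed mergers. A free--free merger of columns $i$ and $i+1$ produces a single boundary cell $e_{(\rho^i, I', J)}$, where $I'$ is obtained from $I$ by decreasing every fixed-column index strictly exceeding $i$ by one; this yields $I' = I_1$, $I_{k+1}$, or $I$ in cases (1), (2), (3) respectively, depending on whether the merger occurs before, strictly between, or after the fixed columns. A free--fixed merger, by contrast, produces a family of boundary strata indexed by an integer $h$: when the free column immediately to the left of the $k^\mathrm{th}$ fixed column absorbs into it, its $\lambda_{i_k-1}$ incoming points can distribute so that any number $h \in \{0,1,\dots,\lambda_{i_k-1}\}$ of them land below $z_k$, giving case (4) with index tuple $I_k$ and shifted count $J_{k,h}$; the symmetric right-side analysis produces case (5).

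The main obstacle is the bookkeeping in cases (4) and (5), namely verifying that a free--fixed merger yields precisely $\lambda_{i_k \pm 1}+1$ distinct codimension-one strata of the listed form and that the index shifts on $I$ and $J$ are as claimed. This reduces to the observation that, as the real parts of the absorbed points approach $k-1$, their imaginary positions remain distinct from each other and from zero and are therefore locally constant in the ``above versus below $z_k$'' pattern along each stratum, so each value of $h$ singles out its own connected component. Once this is checked, the remaining verifications (that the cells cover $\mathrm{Conf}_n(\mathbb{C}_m)$, that no boundary cell is missed, and that the new indexing data agrees with the definitions of $\rho^i$, $I_k$, $I_{k+1}$, and $J_{k,h}$) are routine combinatorial checks made transparent by picture (cf.\ Figure~\ref{fig:FNF_punctured}).
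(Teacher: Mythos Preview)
Your proposal is correct and follows essentially the same approach as the paper: the proposition is justified in the paper by the discussion immediately preceding it in Section~\ref{subsec:fnf_Cm}, which likewise restricts the classical Fox--Neuwirth stratification along the embedding $\mathrm{Conf}_n(\mathbb{C}_m)\hookrightarrow\mathrm{Conf}_{n+m}(\mathbb{C})$, identifies the connected components via the data $(I,J)$, reads off the dimension from the product description, and obtains the five boundary types by the same free--free versus free--fixed case analysis of column mergers. Your write-up is somewhat more explicit about the dimension count and about why each value of $h$ determines a distinct boundary stratum, but the underlying argument is identical.
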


The overall positions of all fixed points are unchanged in the first three types of codimension-1 boundaries, whereas only the position of $z_k$ changes to $\iota_k - \lambda_{i_k-1}+h$ (type $4$) or $\iota_k+h$ (type 5). In particular, the relative order of the overall positions of the fixed points is always preserved. This property is very crucial to the argument of this paper and will be revisited frequently in later sections.

Observe that while combining two adjacent columns of a configuration to obtain a boundary cell, we implicitly make a choice of shuffling the points into a single column. This is governed by a $(p,q)$-\textit{shuffle} $\gamma$, defined to be a bijection $\gamma : \{1,\dots,p\} \sqcup \{1,\dots,q\} \to \{1,\dots,p+q\}$ that preserves the order on both $\{1,\dots,p\}$ and $\{1,\dots,q\}$. Alternatively, a $(p,q)$-shuffle can be interpreted as a permutation on $p+q$ elements that preserves the order on the first $p$ and the last $q$ elements. A $(p,(q,h),j)$\textit{-shuffle} is defined to be a $(p,q)$-shuffle that (as a permutation on $p+q$ elements) sends $j+1$ to $j+h+1$. Similarly, a $((p,h),q,j)$\textit{-shuffle} is a $(p,q)$-shuffle that sends $p+j+1$ to $h+j+1$. These specific types of shuffles govern the point shuffling when combining a fixed column with a free column, and thus are crucial to the construction in this paper; for an analysis of their properties, see \cite[\S4.1]{h22}. We denote the set of $(p,q)$-shuffles by $\mathrm{Sh}(p,q)$.

For integers $p$ and $q$, let $c_{p,q} = \sum_\gamma (-1)^{|\gamma|}$ be the sum of the signs of all $(p,q)$-shuffles $\gamma$. Similarly, for integers $0 \le h \le p$ and $0 \le j < q$, let $c_{q,(p,h),j}$ and $c_{(p,h),q,j}$ be the sums of the signs of all $(q,(p,h),j)$- and $((p,h),q,j)$-shuffles, respectively. From the stratification, we can write down an explicit cellular chain complex for the 1-point compactification Conf$_n (\mathbb{C}_k) \cup \{ \infty \}$.

\begin{defn}[Fox-Neuwirth complex for Conf$_n (\mathbb{C}_k) \cup \{\infty\}$]
    Let $D(n,m)_*$ denote the chain complex which in degree $q$ is generated over $\mathbb{Z}$ by the set of triples $(\lambda,I,J)$ where $\lambda = (\lambda_1, \dots, \lambda_{q-n+m})$ is a composition of $n+m$ of length $q-n+m$, $I = (i_1,\dots,i_m)$ with $1 \le i_1 < \dots < i_m \le l(\lambda)$, and $J = (j_1,\dots,j_m)$ with $0 \le j_k \le \lambda_{i_k} -1$.
    
    The differential $d: D(n,m)_q \to D(n,m)_{q-1}$ is given by the formula
    \[\begin{split}
    \displaystyle d(\lambda, I, J) & = \sum_{i = 1}^{i_1-2} (-1)^{i-1} c_{\lambda_i, \lambda_{i+1}} (\rho^i, I_1, J)\\
    & + \sum_{k=1}^{m-1} \sum_{i = i_k+1}^{i_{k+1}-2} (-1)^{i-1} c_{\lambda_i, \lambda_{i+1}} (\rho^i, I_{k+1}, J)\\
    & + \sum_{i = i_m+1}^{q-n+m-1} (-1)^{i-1} c_{\lambda_i, \lambda_{i+1}} (\rho^i, I, J)\\
    & + \sum_{k=1}^m (-1)^{i_k-2} \displaystyle\sum_{h=0}^{\lambda_{i_k-1}} c_{(\lambda_{i_k-1},h),\lambda_{i_k}, j_k} (\rho^{i_k-1}, I_k, J_{k,h})\\
    & + \sum_{k=1}^m (-1)^{i_k-1}\displaystyle\sum_{h=0}^{\lambda_{i_k+1}} c_{\lambda_{i_k}, (\lambda_{i_k+1},h),j_k} (\rho^{i_k}, I_{k+1}, J_{k,h}).
    \end{split}\]
\end{defn}

The signs in the formula of the differential result from the induced orientations on the boundary strata, following the general framework described by \cite{gs12}. There is a simple formula to compute the constant $c_{p,q}$ using the quantum binomial coefficient (see, e.g., Proposition 1.7.1 of \cite{sta12}):
\[c_{p,q} = \displaystyle {p + q \choose q}_{-1}. \]
The constants $c_{q,(p,h),j}$ and $c_{(p,h),q,j}$ can be computed in terms of constants $c_{u,v}$ as a corollary of decomposition theorems of the $(q,(p,h),j)$- and $((p,h),q,j)$-shuffles (see Lemma 4.5 of \cite{h22}).

The proof that the chain complex $D(n,m)_*$ is well-defined rests on the following observation: all types of boundary cells in this chain complex arise in the exact same way independent of the number $m$ of fixed points. It follows that this proof can be reduced to the special case $m = 1$, which was demonstrated in Proposition 4.7 of \cite{h22}.

\begin{prop}
$d^2 = 0$.
\end{prop}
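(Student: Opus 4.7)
The plan is to verify $d^2 = 0$ by exploiting the locality of the differential: every codimension-two term that appears in $d \circ d$ arises from two successive column-merging operations, and each such pair of operations interacts with at most three or four consecutive columns of the configuration. I would first classify the ways in which two codimension-one boundary operators may compose, sorting them into two families. In the \emph{chained} family, the two merges share a column, so the resulting cell is a coarsening of $\lambda$ in which three consecutive columns have been amalgamated into one; the two orderings that give rise to this cell, namely left-then-right and right-then-left, must cancel. In the \emph{disjoint} family, the two merges involve four distinct columns arranged in two non-overlapping adjacent pairs; the cell appears twice in $d^2$ according to which pair is merged first, and one checks the Koszul-type sign cancellation coming from the positional signs $(-1)^{i-1}$ in the differential.

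Within each family there are subcases depending on how the fixed columns interact with the merges. For chained merges, the three columns involved contain either no fixed column or exactly one fixed column in the left, middle, or right slot of the triple; the case of two fixed columns does not arise because fixed columns cannot merge with one another. Similarly, for disjoint merges, each of the two adjacent pairs involves at most one fixed column. The crucial point is that in every subcase the combinatorial identity one must check is entirely local to the three or four columns involved, and in particular is insensitive to the presence of any other fixed points elsewhere in the configuration. Subcases involving no fixed column reduce directly to the classical Fox--Neuwirth complex for $\mathrm{Conf}_n(\mathbb{C})$, where $d^2 = 0$ is verified in \cite{fn62, fuk70, gs12, etw17}, and subcases involving exactly one fixed column reduce to the identity already established for the $m=1$ complex $D(n,1)_*$ in Proposition~4.7 of \cite{h22}.

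The combinatorial substance of the reduction sits in the identities satisfied by the shuffle constants $c_{p,q}$, $c_{q,(p,h),j}$, and $c_{(p,h),q,j}$. The first obeys a standard associativity relation for quantum shuffles, which governs the cancellation of chained merges involving only free columns. The latter two obey the decomposition identities of Lemma~4.5 of \cite{h22}, which rewrite a shuffle bringing a fixed point to a new overall position as a composition of two simpler shuffles; this decomposition is precisely what is needed to cancel the two orderings of a chained merge that involves one fixed column. The main obstacle I anticipate is the careful bookkeeping of the orientation signs $(-1)^{i-1}$ as column indices shift after the first merge, together with tracking how the indexing data $(I, J)$ is transformed under each type of boundary (the shifts $I_k$ and $J_{k,h}$ interact nontrivially when the same fixed column participates in both merges). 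Once this bookkeeping is made routine, each of the finitely many local patterns cancels by the cited identity, and summing over all contributing patterns completes the verification of $d^2 = 0$.
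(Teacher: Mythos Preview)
Your proposal is correct and follows essentially the same route as the paper's proof: the paper likewise splits $d^2$ into the disjoint case (two separate pairs of columns merged) and the chained case (three adjacent columns merged into one), observes that each local pattern involves at most one fixed column and is therefore insensitive to $m$, and reduces the verification to the $m=1$ identities established in Proposition~4.7 of \cite{h22}. Your anticipated bookkeeping of sign shifts and of the transformations $I_k$, $J_{k,h}$ is exactly the content of the representative computations the paper sketches.
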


\begin{proof}
The general strategy is to enumerate all types of boundary cells in $d^2 (\lambda,I,J)$ and show that their coefficients all vanish. Loosely speaking, cells in $d^2 (\lambda,I,J)$ are formed by subsequently performing two column combinations in the cell $e_{(\lambda,I,J)}$. There are two outcomes: either (1) two pairs of columns in $e_{(\lambda,I,J)}$ are combined separately, or (2) three adjacent columns are combined into a single column. Within each of these types, the argument follows the same logic and only differs slightly in the exact details depending on the types of columns (free or fixed) involved in the combinations and their relative positions. Therefore, we will present the argument for a representative of each type.

For a representative of type (1), we consider the boundary cell
\[((\lambda_1, \dots, \lambda_{i_r-1}+\lambda_{i_r}, \dots, \lambda_i + \lambda_{i+1}, \dots, \lambda_{l(\lambda)}),(I_r)_{s+1},J_{r,h})\]
obtained by joining two pairs of columns indexed by $\{i_r-1,i_r\}$ and $\{i,i+1\}$ $(i_r < i_s < i < i_{s+1}-1)$. Since $J_{r,h}$ is completely determined by the combination of the first pair of columns and $(I_r)_{s+1} = (I_{s+1})_r$, both of the following orders to perform the operations result in this boundary cell: either (1a) combining the first pair then the second pair, or (1b) combining the second pair first. Thus the coefficient of the cell above in $d^2(\lambda,I,J)$ is
\[\begin{array}{c}
(-1)^{i_r-2} c_{(\lambda_{i_r-1},h),\lambda_{i_r}, j_r} \cdot (-1)^{i-2}c_{\lambda_i,\lambda_{i+1}}\\[5pt]
+ (-1)^{i-1}c_{\lambda_i,\lambda_{i+1}} \cdot (-1)^{i_r-2}  c_{(\lambda_{i_r-1},h),\lambda_{i_r}, j_r} = 0.\end{array}\]

For a representative of type (2), consider a boundary cell of the form
\[((\lambda_1, \dots, \lambda_{i_r-2}+\lambda_{i_r-1}+\lambda_{i_r}, \dots, \lambda_{l(\lambda)}),(I_r)_r,J_{r,h})\]
obtained by joining three adjacent columns indexed by $\{i_r-2,i_r-1,i_r\}$. Similarly, there are two orders to perform the operations: either (2a) combining the first two columns then combining the joint column with the third, or (2b) combining the last two columns first. Particularly in case (2b), the fixed column involves in both column combinations, so the $h$ points that move below the fixed point $z_r$ in the final configuration can be split into two steps: $s$ points in the first operation followed by $h-s$ points in the second ($0\le s\le h$). The coefficient of this representative cell in $d^2(\lambda,I,J)$ hence contains a sum over all $s$:
\[\begin{array}{c}
(-1)^{i_r-3} c_{\lambda_{i_r-2},\lambda_{i_r-1}} \cdot (-1)^{i_r-3} c_{(\lambda_{i_r-2}+\lambda_{i_r-1},h),\lambda_{i_r}, j_r} \\[5pt]
+ \displaystyle \sum_{s=0}^h (-1)^{i_r-2} c_{(\lambda_{i_r-1},s),\lambda_{i_r}, j_r} \cdot (-1)^{i_r-3}  c_{(\lambda_{i_r-2},h-s),\lambda_{i_r-1} + \lambda_{i_r}, j_r+s}.\end{array}\]
This expression can be shown to vanish using several identities of the $(p,q)$- and $((p,h),q,j)$-shuffles; a proof of this fact can be found in the proof of Proposition 4.7 of \cite{h22}.
\end{proof}

By construction, the complex $D(n,m)_*$ is isomorphic to the relative cellular chain complex of Conf$_n (\mathbb{C}_m) \cup \{ \infty \}$, relative to the point at infinity. In particular,
\[H_*(\mathrm{Conf}_n(\mathbb{C}_m) \cup \{\infty\}, \{\infty\}) \cong H_*(D(n,m)_*).\]
This construction therefore provides an approach to compute the cellular homology of configuration spaces of multi-punctured complex planes with trivial coefficients.


\section{Representations of braid subgroups}\label{sec:rep_br}

In this section, we will explore the algebra and representation theory of braid subgroups that are related to several configuration spaces of planes, namely $\mathrm{Conf}_n(\mathbb{C}_m)$ and bicolor configuration spaces $\mathrm{Conf}_{n,m}(\mathbb{C})$. We will first study the fundamental groups of these spaces and develop a theory of induced representation for these groups.


\subsection{Fundamental groups of configuration spaces}\label{subsec:br_subgps}

Recall that the $n^{\mathrm{th}}$ \textit{ordered configuration space of} $\mathbb{C}$ is defined to be the space
\[\mathrm{PConf}_n(\mathbb{C}) := \{(c_1,\dots,c_n) \in \mathbb{C}^n : c_i \ne c_j \text{ if } i \ne j \}.\]
The fundamental group of $\mathrm{PConf}_n(\mathbb{C})$ is the classical \textit{pure braid group} $PB_n$ on $n$ strands. There is a natural action of the symmetric group $S_n$ on $\mathrm{PConf}_n(\mathbb{C})$ by permuting the coordinates; the quotient space $\mathrm{PConf}_n(\mathbb{C})/S_n$ can be identified with the unordered configuration space $\mathrm{Conf}_n(\mathbb{C})$ as previously defined. The fundamental group of $\mathrm{Conf}_n(\mathbb{C})$ is the classical \textit{Artin's braid group} $B_n$, which may be presented as
\[B_n = \langle \sigma_1, \dots, \sigma_{n-1} : \sigma_i \sigma_j = \sigma_j \sigma_i \text{ if } |i-j| > 1; \sigma_i \sigma_{i+1} \sigma_i = \sigma_{i+1} \sigma_i \sigma_{i+1} \rangle.\]
There is an exact sequence of groups
\[ 1 \to PB_n \to B_n \to S_n \to 1,\]
where the map $B_n \to S_n$ sends a braid $b$ to its underlying permutation $\underline{b}$ \cite{fn62}. 
The pure braid group $PB_n$ can be presented as a subgroup of $B_n$ generated by the elements
\[ \theta_{ij} = \sigma^{-1}_i \cdots \sigma^{-1}_{j-2} \sigma^2_{j-1} \sigma_{j-2} \cdots \sigma_i \]
for $1 \le i < j \le n$ \cite{bir74}\footnote{Strictly speaking, our formula of $\theta_{ij}$ represents a mirror image of the pure braid generators given by \cite{bir74}. This alternate choice of generators proves to be more compatible with our topological constructions in this paper.}; geometrically, the generator $\theta_{ij}$ is represented the braid that wraps the $i^\mathrm{th}$ strand around the $j^\mathrm{th}$ strand (see Figure~\ref{fig:PB_gen}).

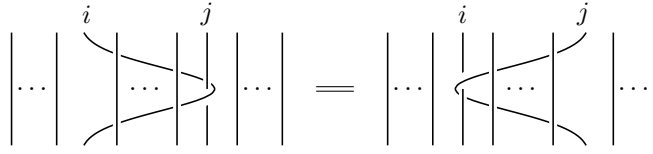
\begin{figure}[t]
    \centering
\begin{tikzpicture}
\draw[white,double=black,very thick,-] (0,0) -- (0,1.5);
\draw[white,double=black,very thick,-] (0.6,0) -- (0.6,1.5);
\node at (0.3,0.75) {\dots};
\draw[white,double=black,very thick,-] (3,0) -- (3,1.5);
\draw[white,double=black,very thick,-] (3.6,0) -- (3.6,1.5);
\node at (3.3,0.75) {\dots};
\draw[white,double=black,very thick,-] (2.6,0) -- (2.6,0.75);
\node at (1,1.75) {$i$};
\draw[smooth,white,double=black,line width=1mm,-] plot[variable=\x,domain=0:1.5] ({1.8*exp(-6*(\x-0.75)*(\x-0.75))+0.9},{\x});
\node at (2.6,1.75) {$j$};
\draw[white,double=black,very thick,-] (2.6,0.75) -- (2.6,1.5);
\draw[white,double=black,very thick,-] (1.4,0) -- (1.4,1.5);
\node at (1.8,0.75) {\dots};
\draw[white,double=black,very thick,-] (2.2,0) -- (2.2,1.5);
\draw[-] (4.05,0.72) -- (4.55,0.72);
\draw[-] (4.05,0.78) -- (4.55,0.78);
\draw[white,double=black,very thick,-] (5,0) -- (5,1.5);
\draw[white,double=black,very thick,-] (5.6,0) -- (5.6,1.5);
\node at (5.3,0.75) {\dots};
\draw[white,double=black,very thick,-] (8,0) -- (8,1.5);
\draw[white,double=black,very thick,-] (8.6,0) -- (8.6,1.5);
\node at (8.3,0.75) {\dots};
\node at (6.0,1.75) {$i$};
\draw[white,double=black,very thick,-] (6.0,0.75) -- (6.0,1.5);
\node at (7.6,1.75) {$j$};
\draw[smooth,white,double=black,line width=1mm,-] plot[variable=\x,domain=0:1.5] ({-1.8*exp(-6*(\x-0.75)*(\x-0.75))+7.7},{\x});
\draw[white,double=black,very thick,-] (6.0,0) -- (6.0,0.75);
\draw[white,double=black,very thick,-] (6.4,0) -- (6.4,1.5);
\node at (6.8,0.75) {\dots};
\draw[white,double=black,very thick,-] (7.2,0) -- (7.2,1.5);
\end{tikzpicture}
    \caption{Pure braid generator $\theta_{ij}$.}
    \label{fig:PB_gen}
\end{figure}

Observe that there is an action of the group $S_n \times S_m$ as a subgroup of $S_{n+m}$ on $\mathrm{PConf}_{n+m}(\mathbb{C})$. The quotient space $\mathrm{PConf}_{n+m}(\mathbb{C})/(S_n \times S_m)$ can be identified with the \textit{bicolor configuration space} $\mathrm{Conf}_{n,m}(\mathbb{C})$ consisting of all configurations of $n$ blue points and $m$ red points in $\mathbb{C}$; its fundamental group is the $(n,m)$\textit{-mixed braid group} $B_{n,m}$, the subgroup of $B_{n+m}$ containing braids with $n$ blue strands and $m$ red strands that preserve the partition $(n,m)$ on the endpoints. Equivalently, $B_{n,m}$ is the preimage of $S_n \times S_m$ under the map $B_{n+m} \to S_{n+m}$. Manfredini gave a presentation of this group in \cite{man97}, in terms of the braid generators $\{\sigma_i\}_{i\ne n}$ of $B_{n+m}$ and $\tau_n = \sigma^2_n$, with the relations
\[\begin{array}{l}
     \sigma_i \sigma_j = \sigma_j \sigma_i \text{ if } |i-j| > 1; \\
     \sigma_i \tau_n = \tau_n \sigma_i \text{ and } \sigma_i \sigma_{i+1} \sigma_i = \sigma_{i+1} \sigma_i \sigma_{i+1} \text{ if } i < n-1 \text{ or } i > n; \\
     \sigma_{i} \tau_n \sigma_{i} \tau_n = \tau_n \sigma_{i} \tau_n \sigma_{i} \text{ if } i = n-1, n+1; \text{ and }\\
     \sigma_{n-1}\tau_n\sigma^{-1}_{n-1}\sigma_{n+1}\tau_n\sigma^{-1}_{n+1} = \sigma_{n+1}\tau_n\sigma_{n+1}^{-1}\sigma_{n-1}\tau_n\sigma_{n-1}^{-1}.
\end{array}\]
Pictorially, the generators $\sigma_{i<n}$, $\sigma_{i>n}$, and $\tau_n$ are represented by the crossings of two blue strands, two red strands, and the full twist of the $n^\mathrm{th}$ strand (blue) and the $n+1^\mathrm{st}$ strand (red), respectively (see Figure~\ref{fig:Bnm_gen}).

\begin{figure}[t]
    \centering
\begin{tikzpicture}
\draw[white,double=blue,very thick,-] (0,0) -- (0,1.5);
\draw[white,double=blue,very thick,-] (0.6,0) -- (0.6,1.5);
\node at (0.3,0.75) {\dots};
\draw[white,double=blue,very thick,-] (1.2,0) -- (0.9,1.5);
\draw[white,double=blue,very thick,-] (0.9,0) -- (1.2,1.5);
\node at (0.9,1.75) {$i$};
\draw[white,double=blue,very thick,-] (1.5,0) -- (1.5,1.5);
\draw[white,double=blue,very thick,-] (2.1,0) -- (2.1,1.5);
\node at (1.8,0.75) {\dots};
\draw[white,double=red,very thick,-] (2.4,0) -- (2.4,1.5);
\draw[white,double=red,very thick,-] (3,0) -- (3,1.5);
\node at (2.7,0.75) {\dots};
\draw[white,double=blue,very thick,-] (4,0) -- (4,1.5);
\draw[white,double=blue,very thick,-] (4.6,0) -- (4.6,1.5);
\node at (4.3,0.75) {\dots};
\draw[white,double=red,very thick,-] (4.9,0) -- (4.9,1.5);
\draw[white,double=red,very thick,-] (5.5,0) -- (5.5,1.5);
\node at (5.2,0.75) {\dots};
\draw[white,double=red,very thick,-] (6.1,0) -- (5.8,1.5);
\draw[white,double=red,very thick,-] (5.8,0) -- (6.1,1.5);
\node at (5.8,1.75) {$i$};
\draw[white,double=red,very thick,-] (6.4,0) -- (6.4,1.5);
\draw[white,double=red,very thick,-] (7,0) -- (7,1.5);
\node at (6.7,0.75) {\dots};
\draw[white,double=blue,very thick,-] (8,0) -- (8,1.5);
\draw[white,double=blue,very thick,-] (8.6,0) -- (8.6,1.5);
\node at (8.3,0.75) {\dots};
\draw[white,double=red,very thick,-] (9.2,0) -- (9.2,0.75);
\node at (8.9,1.75) {$n$};
\draw[smooth,white,double=blue,line width=1mm,-] plot[variable=\x,domain=0:1.5] ({0.4*exp(-6*(\x-0.75)*(\x-0.75))+8.88},{\x});
\draw[white,double=red,very thick,-] (9.2,0.75) -- (9.2,1.5);
\draw[white,double=red,very thick,-] (9.5,0) -- (9.5,1.5);
\draw[white,double=red,very thick,-] (10.1,0) -- (10.1,1.5);
\node at (9.8,0.75) {\dots};
\end{tikzpicture}
    \caption{Mixed braid generators $\sigma_{i<n}$, $\sigma_{i>n}$, and $\tau_n$.}
    \label{fig:Bnm_gen}
\end{figure}
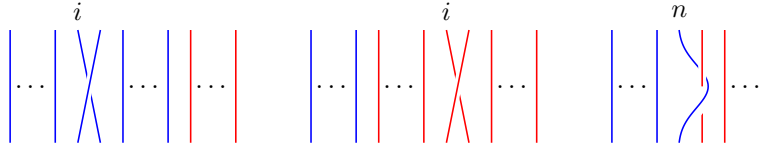

In \cite{fadneu62}, Fadell and Neuwirth showed that there is a fibration
\[ \mathrm{Conf}_n(\mathbb{C}_m) \to \mathrm{Conf}_{n,m}(\mathbb{C}) \to \mathrm{Conf}_m (\mathbb{C}) \]
that results in an exact sequence of fundamental groups
\[ 1 \to B_n(\mathbb{C}_m) \to B_{n,m} \to B_m \to 1.\]
where $B_n(\mathbb{C}_m)$ is the {\it surface braid group on $n$ strands} on $\mathbb{C}_m \cong \Sigma_{0,m+1}$, a surface of genus $0$ with $m+1$ punctures. This group is isomorphic to the subgroup of $B_{n+m}$ consisting of braids whose last $m$ strands are straight, or equivalently, $(n,m)$-mixed braids with only straight red strands. For $k = 1$, $B_n(\mathbb{C}_1)$ is isomorphic to the $(n,1)$-mixed braid group $B_{n,1}$, due to the fact that it is always possible to ``straighten'' the last pure strand. Bellingeri and Godelle gave a positive presentation of all surface braid groups \cite{bellingeri-godelle}; in particular, $B_n(\mathbb{C}_m)$ can be generated by the generators $\sigma_1, \dots, \sigma_{n-1}$ of $B_{n+m}$, and the generators $\theta_{nj}$ for $n+1 \le j \le n+m$ of the pure braid group $PB_{n+m}$.
We may rewrite the pure braid generators as $\theta_{ij} = \sigma_i^{-1} \dots \sigma_{j-2}^{-1} \sigma_{j-1}^2 \sigma_{j-2}\dots\sigma_i = \sigma_{j-1} \dots \sigma_{i+1} \sigma^2_i \sigma_{i+1}^{-1} \dots \sigma_{j-1}^{-1}$ (see Figure~\ref{fig:PB_gen}). It follows that there is a natural embedding of $B_n(\mathbb{C}_m)$ into $B_{n,m}$ that sends the generators $\sigma_i$ to the corresponding $\sigma_i$ in $B_{n,m}$ for $1 \le i \le n-1$, and $\theta_{nj}$ to $\sigma_{j-1} \dots \sigma_{n+1} \tau_n \sigma_{n+1}^{-1} \dots \sigma_{j-1}^{-1}$ for $n+1 \le j \le n+m$.

\subsection{Mixed-braided vector spaces}\label{subsec:mix_bvs}

We develop a family of representations of mixed braid groups coming from braided vector spaces. Let $\mathbf{k}$ be a field; unless otherwise noted, all tensor products will be over $\mathbf{k}$.

\begin{defn}
    A \textit{braided vector space} $V$ over $\mathbf{k}$ is a finite dimensional $\mathbf{k}$-vector space equipped with an invertible \textit{braiding} $\sigma : V \otimes V \to V \otimes V$ such that it satisfies the braid equation on $V^{\otimes 3}$:
\[(\sigma \otimes \mathrm{id}) \circ (\mathrm{id} \otimes \sigma) \circ (\sigma \otimes \mathrm{id}) = (\mathrm{id} \otimes \sigma) \circ (\sigma \otimes \mathrm{id}) \circ (\mathrm{id} \otimes \sigma).\]
\end{defn}

There is a natural action of the braid group $B_n$ on $V^{\otimes n}$ defined by $\sigma_i \mapsto \mathrm{id}^{\otimes i-1} \otimes \sigma \otimes \mathrm{id}^{\otimes n-i-1}$. We will write elements of $V^{\otimes n}$ using bar complex notation, i.e., $[a_1|\dots|a_n]$. Let $(V,\sigma)$ be a braided vector space.

\begin{defn}\label{defn:qsa}
The \textit{quantum shuffle algebra} $\mathfrak{A}(V)$ is a braided, graded bialgebra: its underlying coalgebra is the tensor coalgebra
\[ T^{co}(V) = \displaystyle \bigoplus_{n \ge 0} V^{\otimes n}\]
with the deconcatenation coproduct $\Delta$, equipped with a multiplication given by the quantum shuffle product:
\[\displaystyle [a_1 | \dots | a_n] \star [b_1 | \dots | b_m] = \sum_{\gamma} \widetilde{\gamma} [a_1 | \dots | a_n | b_1 | \dots | b_m]\]
where the sum is over all $(n,m)$-shuffles $\gamma$, and $\widetilde{\gamma} \in B_{n+m}$ is the lift of $\gamma$ obtained by moving the right $m$ strands in front of the left $n$ strands (see Figure~\ref{fig:shuf_lift}).
\end{defn}

\begin{figure}[t]
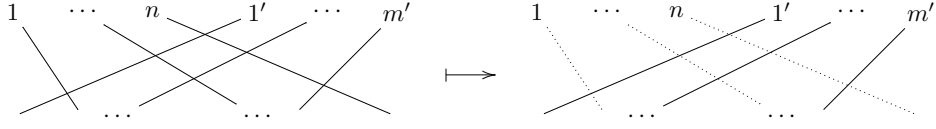

\begin{centering}
    \[ 
\resizebox{\textwidth}{!}{
	\xy
		{\ar@{-} (0,0)*+{}; (35,15)*+{1'} };
		{\ar@{-} (15,0)*+{\cdots}; (45,15)*+{\cdots} }; 
		{\ar@{-} (40,0)*+{}; (55,15)*+{m'} }; 
		{\ar@{-} (10,0)*+{}; (0,15)*+{1} }; 
		{\ar@{-} (35,0)*+{\cdots}; (10,15)*+{\cdots} }; 
		{\ar@{-} (55,0)*+{}; (20,15)*+{n} }; 
		{\ar@{|->} (62,6)*{}; (69,6)*{}};
		{\ar@{-} (75,0)*+{}; (110,15)*+{1'} };
		{\ar@{-} (90,0)*+{\cdots}; (120,15)*+{\cdots} }; 
		{\ar@{-} (115,0)*+{}; (130,15)*+{m'} }; 
		{\ar@{..} (85,0)*+{}; (75,15)*+{1} }; 
		{\ar@{..} (110,0)*+{\cdots}; (85,15)*+{\cdots} }; 
		{\ar@{..} (130,0)*+{}; (95,15)*+{n} }; 
	\endxy
	}
\]
    \captionsetup{justification=centering}
    \caption{Lifting an $(n,m)$-shuffle to a braid.}
    \label{fig:shuf_lift}
\end{centering}
\end{figure}

The quantum shuffle algebra has the structure of a Hopf algebra in a braided monoidal category. Relevant to the approach of this paper, Ellenberg, Tran, and Westerland recently identified the homology of the braid group $B_n$ with coefficients in $V^{\otimes n}$ with the cohomology of the quantum shuffle algebra $\mathfrak{A} = \mathfrak{A}(V_\epsilon)$, where $V_\epsilon$ is $V$ with the braiding twisted by a sign \cite{etw17}.

In \cite{h22}, we defined an analogue of braided vector spaces that is tailored for computations related to the mixed braid group $B_{n,1}$.

\begin{defn}
A \textit{left-braided vector space} $(V, W)$ over $\mathbf{k}$ is a pair of finite dimensional $\mathbf{k}$-vector spaces $V$ and $W$, where $V$ is a braided vector space with a braiding $\sigma$, further equipped with another isomorphism $\tau : V \otimes W \to V \otimes W$ such that it satisfies an additional braid equation on $V^{\otimes 2} \otimes W$:
\[(\sigma \otimes \mathrm{id}) \circ (\mathrm{id} \otimes \tau) \circ (\sigma \otimes \mathrm{id}) \circ (\mathrm{id} \otimes \tau)  = (\mathrm{id} \otimes \tau) \circ (\sigma \otimes \mathrm{id}) \circ (\mathrm{id} \otimes \tau) \circ (\sigma \otimes \mathrm{id}).\]
\end{defn}

Left-braided vector spaces form a category where morphisms $(V_1,W_1,\sigma_1,\tau_1) \to (V_2,W_2,\sigma_2,\tau_2)$ are pairs of $\mathbf{k}$-linear maps $f_V :V_1 \to V_2$ and $f_W : W_1\to W_2$, such that $f_V$ is a morphism of braided vector spaces $(V_1,\sigma_1) \to (V_2, \sigma_2)$ and $f_W$ satisfies the relation: $(f_V \otimes f_W) \circ \tau_1 = \tau_2 \circ (f_V \otimes f_W)$ on $V_1 \otimes W_1$.
Analogous to braided vector spaces, there is an action of $B_{n,1}$ on $V^{\otimes n} \otimes W$ by $\sigma_i \mapsto \mathrm{id}^{\otimes i-1} \otimes \sigma \otimes \mathrm{id}^{n-i}$ for all $1 \le i \le n-1$ and $\tau_n \mapsto \mathrm{id}^{\otimes n-1} \otimes \tau$. Hence, given a left-braided vector space $(V,W,\sigma,\tau)$, $V^{\otimes n} \otimes W$ provides a representation for the group $B_{n,1}$.

For the topological assertions of \cite{h22} and also of this paper, we desire a compatibility condition for the induced representations of mixed braid groups; see Proposition~\ref{prop:comm_diag_cond} and the two paragraphs preceding it for more details. The following definition captures the necessary and sufficient property of a left-braided vector space for the setting in \cite{h22}.

\begin{defn}\label{defn:sep_lbvs}
    A left-braided vector space $(V,W,\sigma,\tau)$ is \textit{separable} if there exists an isomorphism $\varphi : V \otimes W \to W \otimes V$ (called the \textit{separated braiding}) that satisfies the following braid equations on $V^{\otimes 2} \otimes W$:
    \begin{enumerate}
    \item $(\mathrm{id} \otimes \sigma) \circ (\varphi \otimes \mathrm{id}) \circ (\mathrm{id} \otimes \varphi) = (\varphi \otimes \mathrm{id}) \circ (\mathrm{id} \otimes \varphi) \circ (\sigma \otimes \mathrm{id})$;
    \item $(\tau \otimes \mathrm{id}) \circ (\mathrm{id} \otimes \varphi) = (\mathrm{id} \otimes \varphi) \circ (\sigma \otimes \mathrm{id}) \circ (\mathrm{id} \otimes \tau) \circ (\sigma^{-1} \otimes \mathrm{id})$.
    \end{enumerate}
    A separable left-braided vector space $(V,W,\sigma,\tau)$ with the choice of separated braiding $\varphi$ is denoted by $(V,W,\sigma,\tau,\varphi)$.
\end{defn}

We will generalize this construction to give a representation for the $(n,m)$-mixed braid group $B_{n,m}$.

\begin{defn}\label{defn:mix_bvs}
A \textit{mixed-braided vector space} $(V, W, \tau)$ over $\mathbf{k}$ is a pair of braided vector spaces $(V,\sigma_V)$ and $(W,\sigma_W)$, equipped with an isomorphism $\tau : V \otimes W \to V \otimes W$ (called the \textit{mixed braiding}) which satisfies braid equations:
\[(1) \quad (\sigma_V \otimes \mathrm{id}) \circ (\mathrm{id} \otimes \tau) \circ (\sigma_V \otimes \mathrm{id}) \circ (\mathrm{id} \otimes \tau)  = (\mathrm{id} \otimes \tau) \circ (\sigma_V \otimes \mathrm{id}) \circ (\mathrm{id} \otimes \tau) \circ (\sigma_V \otimes \mathrm{id})\]
on $V^{\otimes 2} \otimes W$;
\[(2) \quad (\tau \otimes \mathrm{id}) \circ (\mathrm{id} \otimes \sigma_W) \circ (\tau \otimes \mathrm{id}) \circ (\mathrm{id} \otimes \sigma_W)  = (\mathrm{id} \otimes \sigma_W) \circ (\tau \otimes \mathrm{id}) \circ (\mathrm{id} \otimes \sigma_W) \circ (\tau \otimes \mathrm{id})\]
on $V \otimes W^{\otimes 2}$; and
\[\begin{array}{l}
\hspace{-.06in} (3) \quad (\sigma_V \otimes \mathrm{id}^{\otimes 2}) \circ (\mathrm{id} \otimes \tau \otimes \mathrm{id}) \circ (\sigma_V^{-1} \otimes \mathrm{id}^{\otimes 2}) \circ (\mathrm{id}^{\otimes 2} \otimes \sigma_W) \circ (\mathrm{id} \otimes \tau \otimes \mathrm{id}) \circ (\mathrm{id}^{\otimes 2} \otimes \sigma_W^{-1}) \\[5pt]
= (\mathrm{id}^{\otimes 2} \otimes \sigma_W) \circ (\mathrm{id} \otimes \tau \otimes \mathrm{id}) \circ (\mathrm{id}^{\otimes 2} \otimes \sigma_W^{-1}) \circ (\sigma_V \otimes \mathrm{id}^{\otimes 2}) \circ (\mathrm{id} \otimes \tau \otimes \mathrm{id}) \circ (\sigma_V^{-1} \otimes \mathrm{id}^{\otimes 2})
\end{array}\]
on $V^{\otimes 2} \otimes W^{\otimes 2}$.
\end{defn}

Observe that a mixed-braided vector space $(V,W,\tau)$ is in fact a left-braided vector space where $W$ is braided in such a way that is compatible with the braidings $\sigma_V$ and $\tau$. The mixed-braided vector spaces form a subcategory of the category of left-braided vector spaces, where the morphisms need to satisfy an additional condition that $f_W$ is a morphism of braided vector spaces.

As in the previous constructions, we may define an action of the $(n,m)$-mixed braid group $B_{n,m}$ on $V^{\otimes n} \otimes W^{\otimes m}$ by mapping $\sigma_i$ to $\mathrm{id}^{\otimes i-1} \otimes \sigma_V \otimes \mathrm{id}^{\otimes n+m-1-i}$ if $1 \le i \le n-1$, and $\mathrm{id}^{\otimes i-1} \otimes \sigma_W \otimes \mathrm{id}^{\otimes n+m-1-i}$ if $n+1 \le i \le n+m-1$, and mapping $\tau_n$ to $\mathrm{id}^{\otimes n-1} \otimes \tau \otimes \mathrm{id}^{\otimes m-1}$. From this identification, the following is straightforward:

\begin{prop}\label{prop:mix_rep}
Given a mixed-braided vector space $(V,W,\tau)$, $V^{\otimes n} \otimes W^{\otimes m}$ provides a representation for the $(n,m)$-mixed braid group $B_{n,m}$.
\end{prop}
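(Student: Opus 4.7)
The plan is to verify that the prescribed assignment of the generators $\sigma_i$ (for $i \ne n$) and $\tau_n$ to endomorphisms of $V^{\otimes n}\otimes W^{\otimes m}$ respects every defining relation in Manfredini's presentation of $B_{n,m}$; since any assignment of generators extends uniquely to a map from the free group, it suffices to check each relation. I would organize the check by separating the relations into three families, according to which axiom of Definition~\ref{defn:mix_bvs} or which standard braid equation of $\sigma_V$ or $\sigma_W$ governs them.

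First, the Artin-type relations among the $\sigma_i$ with $i \ne n$ all reduce to something already known. The far-commutativity $\sigma_i \sigma_j = \sigma_j \sigma_i$ with $|i-j|>1$ holds because the two operators act on disjoint tensor slots; the braid relation $\sigma_i \sigma_{i+1} \sigma_i = \sigma_{i+1}\sigma_i \sigma_{i+1}$ for $i < n-1$ involves only $V$-slots and follows from the braid equation for $\sigma_V$, while for $i > n$ it follows from that of $\sigma_W$. Similarly, $\sigma_i \tau_n = \tau_n \sigma_i$ for $i < n-1$ or $i > n$ holds because $\sigma_i$ and $\tau_n$ have disjoint supports among the tensor slots.

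The three genuinely mixed relations are handled by the three axioms of Definition~\ref{defn:mix_bvs} in turn. The relation $\sigma_{n-1}\tau_n\sigma_{n-1}\tau_n = \tau_n \sigma_{n-1}\tau_n\sigma_{n-1}$ is supported on slots $n-1, n, n+1$ and, restricted to $V^{\otimes 2}\otimes W$, is exactly condition (1); the analogous relation with $\sigma_{n+1}$ becomes condition (2) on $V \otimes W^{\otimes 2}$; and Manfredini's long relation $\sigma_{n-1}\tau_n \sigma_{n-1}^{-1}\sigma_{n+1}\tau_n \sigma_{n+1}^{-1} = \sigma_{n+1}\tau_n \sigma_{n+1}^{-1}\sigma_{n-1}\tau_n \sigma_{n-1}^{-1}$ is supported on slots $n-1, n, n+1, n+2$ and becomes, on $V^{\otimes 2}\otimes W^{\otimes 2}$, precisely condition (3). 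The step I expect to be the most delicate is this last translation, both because it demands careful bookkeeping of six operator factors on each side and because its very existence is the reason Definition~\ref{defn:mix_bvs} must include a fourth axiom on $V^{\otimes 2}\otimes W^{\otimes 2}$ beyond the two Yang-Baxter-type equations familiar from the separable-left-braided setting of Definition~\ref{defn:sep_lbvs}; once the definition is aligned with Manfredini's presentation, as it has been, the verification is a routine term-by-term matching.
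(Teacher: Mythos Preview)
Your proposal is correct and is exactly the verification the paper has in mind: the paper does not write out a proof, merely stating that the proposition is ``straightforward'' from the identification of generators given in the paragraph preceding it. Your organization---matching Manfredini's Artin-type relations to the braid equations for $\sigma_V$ and $\sigma_W$ and disjoint-support commutativity, and then matching the three genuinely mixed relations to conditions (1), (2), (3) of Definition~\ref{defn:mix_bvs}---is precisely the intended content of ``straightforward''. One small terminological slip: you refer to condition (3) as a ``fourth axiom beyond the two Yang--Baxter-type equations familiar from the separable-left-braided setting of Definition~\ref{defn:sep_lbvs}'', but Definition~\ref{defn:sep_lbvs} concerns the separated braiding $\varphi$, not the mixed braiding $\tau$; the relevant antecedent is rather the single left-braided axiom (which coincides with condition (1)), so condition (3) is the \emph{third} mixed axiom, as you yourself say earlier in the paragraph.
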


\begin{defn}\label{defn:sep_mbvs}
A mixed-braided vector space $(V,W,\tau)$ is \textit{left-separable} if it is separable as a left-braided vector space.
\end{defn}

As is the case with \cite{h22}, the technical requirement of separability is essential for our topological arguments in this paper, particularly the proof of Proposition~\ref{prop:F=D}. It is unclear to us whether all mixed-braided vector spaces are left-separable; however, for a few natural examples that are most relevant to potential applications of this paper's topological framework, it is easy to detect a suitable separated braiding.

\begin{exmp}\label{exmp:V=W=k}
Given $V=W=\mathbf{k}$, we can define a mixed-braided vector space $(V,W,\tau)$ with braidings $\sigma_V, \sigma_W$, and $\tau$ given by multiplications by $q$, $u$, and $p$ respectively, for some $p, q, u \in \mathbf{k}^\times$. 
This mixed-braided vector space is left-separable when the separated braiding $\varphi: V \otimes W \to W \otimes V$ is chosen to be (any invertible multiple of) permutation of tensor factors. The braid action of $B_{n,m}$ on the representation $V^{\otimes n} \otimes W^{\otimes m} \cong \mathbf{k}$ is therefore given by $\sigma_i \mapsto q$ for $1 \le i \le n-1$, $\sigma_i \mapsto u$ for $n+1 \le i \le n+m-1$, and $\tau_n \mapsto p$. 
\end{exmp}

\begin{exmp}
If $(V,\sigma)$ is a braided vector space, then $(V,V,\sigma^2)$ forms a left-separable mixed-braided vector space with the obvious choice of separated braiding $\varphi := \sigma$. The $B_{n,m}$-representation constructed from this mixed-braided vector space per Proposition~\ref{prop:mix_rep} is precisely the restricted representation to $B_{n,m}$ of the monoidal braid representation of $B_{n+m}$ on $V^{\otimes n+m}$.
\end{exmp}


\subsection{Induced representations of braid subgroups}

Recall that an $(n,m)$-shuffle is a permutation on $n+m$ elements that preserves the order on the first $n$ and the last $m$ elements. An $(n,m)$-shuffle $\gamma$ can be completely determined by an indexing set $\mathcal{I} = \{i_1, \dots, i_m\}$ where $i_k = \gamma(n+k)$ $(1 \le i_1 < \dots < i_m \le n+m)$. We denote the $(n,m)$-shuffle defined by the indexing set $\mathcal{I}$ by $\gamma_{\mathcal{I},n+m}$; the second index will usually be omitted when there is no ambiguity. If $\mathcal{I} = (i_1, \dots, i_m)$ is an $m$-tuple whose elements are distinct but not necessarily increasing, we define the associated shuffle $\gamma_{\mathcal{I},n+m}$ by the underlying set of $\mathcal{I}$.

Consider the left cosets of $B_{n,m}$ in $B_{n+m}$.

\begin{prop}
The collection of left cosets of $B_{n,m}$ in $B_{n+m}$ has the form
\[B_{n+m}/B_{n,m} = \{\widetilde{\alpha} B_{n,m} : \alpha \in \mathrm{Sh}(n,m)\}.\]
\end{prop}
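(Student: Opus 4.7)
The plan is to reduce this to the classical combinatorial fact that $(n,m)$-shuffles index the left cosets of $S_n \times S_m$ in $S_{n+m}$, and then lift through the natural surjection $B_{n+m} \twoheadrightarrow S_{n+m}$.

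First I would invoke the exact sequence $1 \to PB_{n+m} \to B_{n+m} \xrightarrow{p} S_{n+m} \to 1$. Since $B_{n,m}$ is by definition $p^{-1}(S_n \times S_m)$, the pure braid group is contained in $B_{n,m}$, and $p$ descends to a bijection
\[
B_{n+m}/B_{n,m} \;\xrightarrow{\;\cong\;}\; S_{n+m}/(S_n \times S_m).
\]
So it suffices to exhibit a complete and irredundant set of left-coset representatives of $S_n \times S_m$ in $S_{n+m}$, and then lift them to braids.

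Next I would recall (or briefly justify) the standard fact that the set of $(n,m)$-shuffles forms such a system of representatives: given any $\pi \in S_{n+m}$, the ordered image $(\pi(1),\dots,\pi(n))$ can be sorted by a unique element of $S_n$, and similarly $(\pi(n+1),\dots,\pi(n+m))$ by a unique element of $S_m$; the product of these two inverse sorts, applied on the right, yields a unique permutation which preserves the order on $\{1,\dots,n\}$ and on $\{n+1,\dots,n+m\}$, i.e., an $(n,m)$-shuffle $\alpha$. Thus $\pi \in \alpha \cdot (S_n \times S_m)$, with $\alpha$ unique, giving the decomposition
\[
S_{n+m} \;=\; \coprod_{\alpha \in \mathrm{Sh}(n,m)} \alpha \cdot (S_n \times S_m).
\]

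Finally, for each $\alpha \in \mathrm{Sh}(n,m)$, the chosen lift $\widetilde{\alpha} \in B_{n+m}$ (obtained, as in the quantum shuffle convention of Definition~\ref{defn:qsa}, by passing the right $m$ strands in front of the left $n$ strands in the pattern prescribed by $\alpha$) satisfies $p(\widetilde{\alpha}) = \alpha$. Pulling the $S_{n+m}/(S_n\times S_m)$ decomposition back along $p$ then yields
\[
B_{n+m} \;=\; \coprod_{\alpha \in \mathrm{Sh}(n,m)} \widetilde{\alpha}\, B_{n,m},
\]
as claimed. The only step requiring any care is checking irredundancy in $B_{n+m}$: if $\widetilde{\alpha} B_{n,m} = \widetilde{\beta} B_{n,m}$ then $p(\widetilde{\beta}^{-1}\widetilde{\alpha}) = \beta^{-1}\alpha$ lies in $S_n \times S_m$, which by the uniqueness of the shuffle factorization forces $\alpha = \beta$. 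I do not foresee a genuine obstacle; the main subtlety is merely being consistent about which lift of a shuffle to a braid is used, but since any two lifts differ by an element of $PB_{n+m} \subset B_{n,m}$, the coset $\widetilde{\alpha} B_{n,m}$ is independent of the choice.
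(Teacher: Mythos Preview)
Your proposal is correct and follows essentially the same approach as the paper: both reduce the question to coset representatives of $S_n\times S_m$ in $S_{n+m}$ via the projection $B_{n+m}\to S_{n+m}$, using that $B_{n,m}$ is the full preimage of $S_n\times S_m$. The paper phrases this slightly more concretely---characterizing $aB_{n,m}=a'B_{n,m}$ by $\underline{a}(\llbracket n+1,n+m\rrbracket)=\underline{a'}(\llbracket n+1,n+m\rrbracket)$---but your formulation via the induced bijection $B_{n+m}/B_{n,m}\cong S_{n+m}/(S_n\times S_m)$ is equivalent and arguably cleaner.
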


\begin{proof}
We claim that the left cosets $a B_{n,m}$ are indexed by the image of the integer interval $\llbracket n+1, n+m \rrbracket$ under the underlying permutation of the braid $a \in B_{n+m}$. For any $a_1, a_2 \in B_{n+m}$, $a_1 B_{n,m} = a_2 B_{n,m}$ as cosets iff $a_1^{-1} a_2 \in B_{n,m}$. By definition, this is equivalent to $\underline{a_1}^{-1} \underline{a_2} (\llbracket n+1, n+m \rrbracket) = \llbracket n+1, n+m \rrbracket$, or $\underline{a_1}(\llbracket n+1, n+m \rrbracket) = \underline{a_2}(\llbracket n+1, n+m \rrbracket)$. So we have a simple characterization of the cosets of $B_{n,m}$ in $B_{n+m}$: two braid elements of $B_{n+m}$ are in the same coset of $B_{n,m}$ if and only if the images of the interval $\llbracket n+1, n+m \rrbracket$ under their underlying permutations coincide. It follows that the index of $B_{n,m}$ in $B_{n+m}$ is $n+m \choose m$. Furthermore, if we impose that the underlying permutations of the representative braids preserve the order on $\llbracket n+1, n+m \rrbracket$, observe that an explicit choice for the representatives of the cosets of $B_{n,m}$ is the collection of the lifts of all $(n,m)$-shuffles $\alpha$, as desired.
\end{proof}

Since $\{\widetilde{\alpha} : \alpha \in \mathrm{Sh}(n,m) \}$ forms a full set of representatives, for each $a \in B_{n+m}$ and each $\widetilde{\alpha}$, there exist uniquely elements $b \in B_{n,m}$ and $\widetilde{\alpha'}$ such that $a \widetilde{\alpha} = \widetilde{\alpha'} b$. We may give a concrete description of these elements. For any $(n,m)$-shuffle $\alpha_\mathcal{I}$ and $p \in S_{n+m}$, let $p_{\alpha_\mathcal{I}}$ denote the $(n,m)$-shuffle associated to $p(\mathcal{I}) = (p(i_1), \dots, p(i_m))$. Note that in general, $p_{\alpha_\mathcal{I}} = p\alpha_\mathcal{I}$ if and only if $p$ preserves the order on $\mathcal{I}$ and $[n+m]\setminus\mathcal{I}$. Since the underlying permutation of $b = (\widetilde{\alpha'})^{-1} a \widetilde{\alpha_\mathcal{I}} \in B_{n,m}$ preserves $\llbracket n+1, n+m \rrbracket$, it follows that $\alpha' (\llbracket n+1,n+m \rrbracket) = \underline{a}\alpha_\mathcal{I} (\llbracket n+1,n+m \rrbracket) = \underline{a}(\mathcal{I})$ as unordered sets. By the above definition, $\alpha' = \underline{a}_{\alpha_\mathcal{I}}$, so $b = \widetilde{{\underline{a}_\alpha}}^{-1} a \widetilde{\alpha}$.

In the topological setup of this paper (see Sections~\ref{subsec:fnf_Cm} and~\ref{subsec:fnf_twisted}), braids often arise from lifting shuffles on $n+m$ elements that preserve the order on the set $\mathcal{I}$ of overall positions of the fixed points in a configuration. As a result, we are particularly interested in the case when the $i_j^\mathrm{th}$ and $i_k^\mathrm{th}$ strands of the braid $a$ are pairwise parallel for all distinct $i_j, i_k \in \mathcal{I}$. In this case, for all $n+1 \le j < k \le n+m$, the $j^\mathrm{th}$ and $k^\mathrm{th}$ strands of $b$ are always parallel in each successive component $\widetilde{\alpha}$, $a$, and $\widetilde{\underline{a}_\alpha}^{-1}$. This property allows us to straighten each of the last $m$ strands in the braid, which implies that $b \in B_n(\mathbb{C}_m)$. Moreover, since $\underline{a}$ preserves the order on $\mathcal{I}$, the tuple $\underline{a}(\mathcal{I}) = \left(\underline{a}(i_1), \dots, \underline{a}(i_m) \right)$ already has the desired increasing order defining the shuffle $\underline{a}_\alpha$.

Given a representation of any subgroup, we may define a representation of the parent group by means of the \textit{induced representation}. Let $L$ be a representation of $B_{n,m}$. The braid representation of $B_{n,m}$ on $L$ induces a representation on
\[\mathrm{Ind}^{B_{n+m}}_{B_{n,m}} (L) = k[B_{n+m}] \displaystyle \otimes_{k[B_{n,m}]} L \]
of the braid group $B_{n+m}$. We may give a more detailed description of this induced representation based on the cosets of the subgroup $B_{n,m}$ in $B_{n+m}$ described above. Since the collection $\{\widetilde{\alpha} : \alpha \in \mathrm{Sh}(n,m) \}$ gives a full set of representatives in $B_{n+m}$ for the left cosets of $B_{n,m}$, as vector spaces, the induced representation can be identified as
\[\mathrm{Ind}^{B_{n+m}}_{B_{n,m}} (L) \cong \displaystyle \bigoplus_{\alpha \in \mathrm{Sh}(n,m)} \widetilde{\alpha} L.\]
Here each $\widetilde{\alpha} L$ is an isomorphic copy of the vector space $L$ whose elements are written as $\widetilde{\alpha} \ell$ where $\ell \in L$. An explicit formula for the action of the braid group on this induced representation follows immediately from the previous paragraphs.

\begin{prop}\label{prop:act_ind_mix}
The action of the braid group $B_{n+m}$ on the induced representation $\mathrm{Ind}^{B_{n+m}}_{B_{n,m}} (L)$ is given by
\[ a \sum_{\alpha\in\mathrm{Sh}(n,m)} \widetilde{\alpha} \ell_\alpha = \sum_{\alpha\in\mathrm{Sh}(n,m)} \widetilde{\underline{a}_\alpha} \big[ (\widetilde{\underline{a}_\alpha}^{-1} a \widetilde{\alpha}) (\ell_\alpha) \big]. \]
\end{prop}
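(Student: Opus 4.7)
The plan is to read off the formula directly from the coset decomposition established in the paragraphs immediately preceding the proposition, together with the standard description of an induced representation as a direct sum of twisted copies of $L$ indexed by coset representatives.

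First, I would recall the identification
\[ \mathrm{Ind}^{B_{n+m}}_{B_{n,m}}(L) = \mathbf{k}[B_{n+m}]\otimes_{\mathbf{k}[B_{n,m}]} L \cong \bigoplus_{\alpha\in\mathrm{Sh}(n,m)} \widetilde{\alpha}\,L, \]
where each simple tensor $\widetilde{\alpha}\otimes \ell$ is written as $\widetilde{\alpha}\ell$. Since $\{\widetilde{\alpha}:\alpha\in\mathrm{Sh}(n,m)\}$ is a full set of left coset representatives for $B_{n,m}$ in $B_{n+m}$ (shown in the previous proposition), every element of the induced module has a unique expression $\sum_\alpha \widetilde{\alpha}\ell_\alpha$ with $\ell_\alpha \in L$, and the action of $a\in B_{n+m}$ is, by definition, $a\cdot(\widetilde{\alpha}\ell_\alpha) = (a\widetilde{\alpha})\otimes \ell_\alpha$, rewritten via the relation $\widetilde{\alpha'}b\otimes \ell = \widetilde{\alpha'}\otimes b\ell$ whenever $b\in B_{n,m}$.

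Next, I would invoke the factorization worked out just above the proposition: for each $a\in B_{n+m}$ and each shuffle $\alpha$, there are unique $\alpha'\in\mathrm{Sh}(n,m)$ and $b\in B_{n,m}$ with $a\widetilde{\alpha} = \widetilde{\alpha'}b$, and moreover $\alpha' = \underline{a}_{\alpha}$ (as defined in the text in terms of the underlying permutation of $a$ applied to the indexing set of $\alpha$) and $b = \widetilde{\underline{a}_{\alpha}}^{-1} a\widetilde{\alpha}$. Substituting this factorization and using the $\mathbf{k}[B_{n,m}]$-balance of the tensor product gives
\[ a\cdot(\widetilde{\alpha}\ell_\alpha) = \widetilde{\underline{a}_{\alpha}}\,b\otimes \ell_\alpha = \widetilde{\underline{a}_{\alpha}} \bigl[(\widetilde{\underline{a}_{\alpha}}^{-1} a\widetilde{\alpha})(\ell_\alpha)\bigr]. \]
Summing this identity over $\alpha\in\mathrm{Sh}(n,m)$ yields precisely the formula asserted in the proposition.

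The only potentially nontrivial point to confirm is that the assignment $\alpha\mapsto \underline{a}_\alpha$ is a bijection on $\mathrm{Sh}(n,m)$, so that the sum on the right still ranges over all shuffles (making the action well-defined and equivariant). This follows because $\underline{a}$ is a permutation of $\{1,\dots,n+m\}$, hence $\alpha\mapsto \underline{a}(\mathcal{I}_\alpha)$ (taken as an unordered subset, then reordered) is a bijection on $m$-subsets of $\{1,\dots,n+m\}$, which is the indexing set for $(n,m)$-shuffles. I expect this to be a very short verification; there is no real obstacle, since the substantive coset bookkeeping has been carried out before the statement, and the proposition is essentially a repackaging of that bookkeeping in the language of induced representations.
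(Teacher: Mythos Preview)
Your proposal is correct and matches the paper's approach exactly: the paper states that the formula ``follows immediately from the previous paragraphs'' and gives no further argument, and your write-up simply spells out that immediate deduction (coset decomposition plus the factorization $a\widetilde{\alpha}=\widetilde{\underline{a}_\alpha}\,b$ already established). Your extra remark that $\alpha\mapsto\underline{a}_\alpha$ is a bijection on $\mathrm{Sh}(n,m)$ is a harmless clarification the paper leaves implicit.
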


The following computation is straightforward:

\begin{cor}\label{cor:act_ind_mix_gen}
Let $\mathcal{I} = (i_1, \dots, i_m)$ be the indexing set of an $(n,m)$-shuffle $\alpha$, and $\mathcal{I}_{k,h} = (i_1, \dots, i_k + h, \dots, i_m)$. The action of the generators of $B_{n+m}$ on $\mathrm{Ind}^{B_{n+m}}_{B_{n,m}}(L)$ can be expressed in terms of the action of the generators of $B_{n,m}$ in the following way:
\[ \sigma_i(\widetilde{\alpha} \ell) = \begin{cases}
\widetilde{\alpha} [\sigma_i (\ell)] \hfill 1 \le i < i_1-1 \\
\widetilde{\alpha} [\sigma_{i-k} (\ell)] \hspace{1.6in} \hfill 1 \le k \le m-1, i_k < i < i_{k+1}-1 \\
\widetilde{\alpha} [\sigma_{i-m} (\ell)] \hfill i_m < i < n+m \\
\widetilde{\alpha}[\sigma_{n+k}(\ell)] \hfill 1 \le k \le m-1, i = i_k = i_{k+1}-1 \\
\widetilde{\alpha_{\mathcal{I}_{k,-1}}} \ell \hfill 1 \le k \le m, i = i_k-1 > i_{k-1} \\
\widetilde{\alpha_{\mathcal{I}_{k,1}}} [(\sigma_{i_k-k+1} \dots \sigma_{n-1} \theta_{n,n+k} \sigma_{n-1}^{-1} \dots \sigma_{i_k-k+1}^{-1})(\ell)] \\
\hfill 1 \le k \le m, i = i_k < i_{k+1}-1,
\end{cases}
\]
where $\theta_{n,n+k} = \sigma_{n+k-1} \dots \sigma_{n+1} \tau_n \sigma_{n+1}^{-1} \dots \sigma_{n+k-1}^{-1}$.
\end{cor}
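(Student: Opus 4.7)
The plan is to apply Proposition~\ref{prop:act_ind_mix} with $a = \sigma_i$ on a single summand $\widetilde{\alpha}\ell$, then case-split on the position of $i$ relative to $\mathcal{I}$ in order to identify both the outgoing coset representative $\widetilde{\underline{\sigma_i}_\alpha}$ and the braid $b := \widetilde{\underline{\sigma_i}_\alpha}^{-1}\,\sigma_i\,\widetilde{\alpha} \in B_{n,m}$. The initial observation is that the transposition $\underline{\sigma_i} = (i, i+1)$ fixes $\mathcal{I}$ as a set except when exactly one of $i, i+1$ belongs to $\mathcal{I}$, which isolates cases 5 and 6; in every other case $\underline{\sigma_i}_\alpha = \alpha$, and the six listed sub-cases come from cataloguing the position of $i$ with respect to the blocks between consecutive fixed columns.

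For the first four cases, positions $i$ and $i+1$ at the top of $\widetilde{\alpha}$ carry strands of the same color, so $b = \widetilde{\alpha}^{-1}\sigma_i\widetilde{\alpha}$ is the single crossing of those two strands and lies in $B_{n,m}$. The identification as a named generator is then bookkeeping: if $k$ red strands occupy positions in $\{i_1,\dots,i_k\}\subseteq\{1,\dots,i\}$, the top position $i$ is filled by blue strand number $i-k$, so $b = \sigma_{i-k}$ (covering cases 1--3 according to $k=0$, $1\le k\le m-1$, and $k=m$). In case 4 the equality $i = i_k = i_{k+1}-1$ forces both positions to hold red strands $k$ and $k+1$, and the crossing becomes $\sigma_{n+k}$ in the mixed-braid presentation of Section~\ref{subsec:br_subgps}.

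In case 5, where position $i_k-1$ is blue and $i_k$ is red, I would verify that $b$ is trivial in $B_{n,m}$. The convention that $\widetilde{\cdot}$ brings right (red) strands in front of left (blue) strands means that $\widetilde{\alpha}$ passes the red $k$-th strand in front of the blue at position $i_k-1$, whereas $\widetilde{\alpha_{\mathcal{I}_{k,-1}}}$ passes it one position earlier; the extra crossing supplied by $\sigma_{i_k-1}$ exactly fills the missing front-crossing, after which everything cancels. This can be made rigorous by expanding both lifts as ordered products of generators (using the combinatorics of $(n,m)$-shuffles) and observing term-by-term cancellation, generalizing the $m=1$ analysis in \cite{h22}.

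Case 6 is the heart of the argument. Here $\sigma_{i_k}$ crosses red strand $k$ past the blue strand $p := i_k - k + 1$ to the right, after which $\widetilde{\alpha_{\mathcal{I}_{k,1}}}^{-1}$ pulls red $k$ back to position $n+k$, this time \emph{behind} blue $p$ (since the inverse lift sends left strands in front). The net trajectory of red $k$ is a single loop encircling blue $p$, i.e., the pure braid generator $\theta_{p,n+k}$. To rewrite this element in the generators of $B_{n,m}$, I would apply the identity $\theta_{p,n+k} = \sigma_p\sigma_{p+1}\cdots\sigma_{n-1}\,\theta_{n,n+k}\,\sigma_{n-1}^{-1}\cdots\sigma_p^{-1}$, which is the conjugation that rigidly slides blue strand $p$ to position $n$, together with the expansion $\theta_{n,n+k} = \sigma_{n+k-1}\cdots\sigma_{n+1}\,\tau_n\,\sigma_{n+1}^{-1}\cdots\sigma_{n+k-1}^{-1}$ already recorded in Section~\ref{subsec:br_subgps}. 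The main obstacle is identifying $b$ as precisely $\theta_{p,n+k}$ rather than its inverse or a product involving other pure braid generators; making the sign conventions rigorous requires either an explicit planar-diagram argument tracking the over/under pattern of all crossings in the three factors of $b$, or an algebraic expansion of both lifts followed by repeated application of the braid relations to simplify into the claimed conjugate form.
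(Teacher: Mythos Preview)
Your overall strategy---specialize Proposition~\ref{prop:act_ind_mix} to $a=\sigma_i$ and split into cases according to where $i$ sits relative to $\mathcal{I}$---is exactly what the paper intends; the paper itself offers no argument beyond declaring the computation ``straightforward,'' so in that sense you have supplied more than the paper does. Cases 1--5 are handled correctly.

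There is, however, a genuine error in your treatment of case~6. The braid $b=\widetilde{\alpha_{\mathcal{I}_{k,1}}}^{\,-1}\sigma_{i_k}\widetilde{\alpha}$ is \emph{not} equal to $\theta_{p,n+k}$ for $p=i_k-k+1$, and the identity you invoke,
\[
\theta_{p,n+k}\;=\;\sigma_p\cdots\sigma_{n-1}\,\theta_{n,n+k}\,\sigma_{n-1}^{-1}\cdots\sigma_p^{-1},
\]
is false. A small test case exposes both problems: take $n=2$, $m=1$, $\mathcal{I}=(1)$, so $p=1$, $k=1$. Then $\widetilde{\alpha_{(1)}}=\sigma_1\sigma_2$, $\widetilde{\alpha_{(2)}}=\sigma_2$, and $b=\sigma_2^{-1}\sigma_1(\sigma_1\sigma_2)=\sigma_2^{-1}\sigma_1^2\sigma_2$. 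Using the braid relation one checks $\sigma_2^{-1}\sigma_1^2\sigma_2=\sigma_1\sigma_2^2\sigma_1^{-1}$, which is the corollary's answer. But $\theta_{1,3}=\sigma_1^{-1}\sigma_2^2\sigma_1=\sigma_2\sigma_1^2\sigma_2^{-1}$, and $\sigma_2\sigma_1^2\sigma_2^{-1}=\sigma_2^{-1}\sigma_1^2\sigma_2$ would force $\sigma_1^2\sigma_2^2=\sigma_2^2\sigma_1^2$, i.e.\ $\theta_{1,2}\theta_{2,3}=\theta_{2,3}\theta_{1,2}$ in $PB_3$, which is well known to fail.

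Geometrically, the issue is this: you correctly observe that red strand $k$ ends up encircling blue strand $p$, but you neglect how it passes the \emph{other} blue strands $p+1,\dots,n$ during that loop. In $\theta_{p,n+k}$ (as drawn in Figure~\ref{fig:PB_gen}) strand $p$ travels in front of the intermediate strands; in $b$ it travels behind them, because in $\widetilde{\alpha}$ the red strand crosses those blues in front and in $\widetilde{\alpha_{\mathcal{I}_{k,1}}}^{\,-1}$ it returns behind them, so the intermediate crossings cancel only after conjugation. The correct manipulation is to recognise $b$ directly as the conjugate $\sigma_p\cdots\sigma_{n-1}\,\theta_{n,n+k}\,\sigma_{n-1}^{-1}\cdots\sigma_p^{-1}$ (slide blue $p$ to position $n$, wrap, slide back), without ever passing through $\theta_{p,n+k}$.
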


Let $(V,W,\tau)$ be a mixed-braided vector space. Recall that $V^{\otimes n} \otimes W^{\otimes m}$ gives a representation of $B_{n,m}$. Consider the induced representation $\mathrm{Ind}_{B_{n,m}}^{B_{n+m}}(V^{\otimes n} \otimes W^{\otimes m})$.
Each summand $\widetilde{\alpha} (V^{\otimes n}\otimes W^{\otimes m})$ of this induced representation is isomorphic to $V^{\otimes n}\otimes W^{\otimes m}$. Since there is a one-to-one correspondence between the set of $(n,m)$-shuffles and tuples $\mathcal{I} = (i_1, \dots, i_m)$ of $m$ strictly increasing integers in $\llbracket 1, n+m \rrbracket$, it is natural to identify $\widetilde{\alpha_{\mathcal{I},n+m}} (V^{\otimes n}\otimes W^{\otimes m})$ with $V^{\otimes i_1-1} \otimes W \otimes V^{\otimes i_2-i_1-1} \otimes W \otimes \dots \otimes W \otimes V^{\otimes n+m-i_m}$, where the $i_k^\mathrm{th}$ tensor factor is $W$ for all $1 \le k \le m$, via an isomorphism $\xi_{\mathcal{I},n+m} : \widetilde{\alpha_{\mathcal{I},n+m}} (V^{\otimes n}\otimes W^{\otimes m}) \xrightarrow{\cong} V^{\otimes i_1-1} \otimes W \otimes V^{\otimes i_2-i_1-1} \otimes W \otimes \dots \otimes W \otimes V^{\otimes n+m-i_m}$.

\begin{prop}\label{prop:ind_mix}
There is an isomorphism of vector spaces
\[ \mathrm{Ind}_{B_{n,m}}^{B_{n+m}}(V^{\otimes n}\otimes W^{\otimes m}) \cong \displaystyle \bigoplus_{\mathcal{I}} V^{\otimes i_1-1} \otimes W \otimes V^{\otimes i_2-i_1-1} \otimes W \otimes \dots \otimes W \otimes V^{\otimes n+m-i_m}\]
where $\mathcal{I}$ runs over all tuples $(i_1, \dots, i_m)$ with $1 \le i_1 < \dots < i_m \le n+m$.

Moreover, given a choice of isomorphisms $\xi_{\mathcal{I},n+m}$, there is a $B_{n+m}$-action on the right hand side defined by $a \mapsto \xi_{\underline{a}(\mathcal{I})} a \xi_\mathcal{I}^{-1}$, such that the above is an isomorphism of $B_{n+m}$-representation.
\end{prop}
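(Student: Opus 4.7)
The plan is to deduce both assertions by straightforward transport of structure from the coset decomposition established above together with Proposition~\ref{prop:act_ind_mix}. For the vector space isomorphism, I start from the identification
\[\mathrm{Ind}_{B_{n,m}}^{B_{n+m}}(V^{\otimes n} \otimes W^{\otimes m}) \cong \bigoplus_{\alpha \in \mathrm{Sh}(n,m)} \widetilde{\alpha}(V^{\otimes n} \otimes W^{\otimes m}),\]
combined with the bijection $\alpha \leftrightarrow \mathcal{I}$ between $(n,m)$-shuffles and strictly increasing tuples $\mathcal{I} = (i_1,\dots,i_m)$ with $1\le i_1 < \dots < i_m \le n+m$ recalled at the start of the subsection. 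For each such $\mathcal{I}$, the summand $\widetilde{\alpha_\mathcal{I}}(V^{\otimes n} \otimes W^{\otimes m})$ has the same dimension as $V^{\otimes i_1-1} \otimes W \otimes V^{\otimes i_2-i_1-1} \otimes W \otimes \dots \otimes W \otimes V^{\otimes n+m-i_m}$, so any choice of linear isomorphisms $\xi_{\mathcal{I},n+m}$ between these two assembles into the claimed isomorphism $\xi = \bigoplus_\mathcal{I} \xi_{\mathcal{I},n+m}$.

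For the second statement, I transport the $B_{n+m}$-action on the left-hand side (as described in Proposition~\ref{prop:act_ind_mix}) through $\xi$ by conjugation. On the summand indexed by $\mathcal{I}$ on the right-hand side, an element $a \in B_{n+m}$ should act as $\xi_{\underline{a}(\mathcal{I})} \circ a \circ \xi_\mathcal{I}^{-1}$; here $\underline{a}(\mathcal{I})$ is precisely the indexing tuple of the shuffle $\underline{a}_{\alpha_\mathcal{I}}$ into whose associated summand the left-hand action sends $\widetilde{\alpha_\mathcal{I}}(V^{\otimes n} \otimes W^{\otimes m})$. By this very definition, $\xi$ is $B_{n+m}$-equivariant, so once we know the formula defines an action, the two representations are isomorphic.

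The only point that requires verification is that the transported formula genuinely defines a group action, and this is immediate from the chain rule $\underline{ab}(\mathcal{I}) = \underline{a}(\underline{b}(\mathcal{I}))$ applied summandwise:
\[\bigl(\xi_{\underline{a}(\underline{b}(\mathcal{I}))} \circ a \circ \xi_{\underline{b}(\mathcal{I})}^{-1}\bigr) \circ \bigl(\xi_{\underline{b}(\mathcal{I})} \circ b \circ \xi_\mathcal{I}^{-1}\bigr) = \xi_{\underline{ab}(\mathcal{I})} \circ (ab) \circ \xi_\mathcal{I}^{-1},\]
which is precisely the prescribed action of $ab$. There is no substantive obstacle: the role of this proposition is essentially to fix notation for the $B_{n+m}$-representation that will enter later constructions, and the explicit action of the generators $\sigma_i$ on the right-hand side (in terms of $\sigma_V$, $\sigma_W$, and $\tau$) can be read off directly from Corollary~\ref{cor:act_ind_mix_gen} once the $\xi_{\mathcal{I},n+m}$ are chosen to intertwine the $B_{n,m}$-action on $V^{\otimes n}\otimes W^{\otimes m}$ with the evident action on the reordered tensor product.
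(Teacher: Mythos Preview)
Your proof is correct and follows the same transport-of-structure approach as the paper, which simply says the first statement ``results directly from the previous paragraph'' and the second ``follows immediately from the definition of the action.'' Your version is more explicit in verifying the group-action axiom via the chain rule $\underline{ab}(\mathcal{I}) = \underline{a}(\underline{b}(\mathcal{I}))$, but this is exactly the content the paper is taking for granted.
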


\begin{proof}
The first statement results directly from the previous paragraph. The second follows immediately from the definition of the action.
\end{proof}

By convention, it is always assumed that $\xi_{\mathcal{I},n+m}$ is the identity if $\mathcal{I} = (n+1,\dots,n+m)$. The second index of the map $\xi_{\mathcal{I},n+m}$ again denotes the total degree of the domain and is omitted from the notation if there is no ambiguity. Observe that on the right hand side, we can apply braids that ``swap'' $V$ and $W$, an operation that is forbidden in the $B_{n,m}$-representation on $V^{\otimes n} \otimes W^{\otimes m}$. This allows for a more intuitive framework to study the action of $B_{n+m}$ on the induced representation, more analogous to its action in the monoidal braid representation on $V^{\otimes n+m}$.
As in the quantum shuffle algebra, we will also denote elements of $V^{\otimes i_1-1} \otimes W \otimes V^{\otimes i_2-i_1-1} \otimes W \otimes \dots \otimes W \otimes V^{\otimes n+m-i_m}$ by the bar complex notation, i.e., $[v_1|\dots|v_{i_k-1}| w_k| v_{i_k+1}| \dots| v_{n+m}]$.

Recall that for the purpose of this paper, we are interested the action of braids $a$ whose $i_j^\mathrm{th}$ and $i_k^\mathrm{th}$ strands are pairwise parallel for all distinct $i_j, i_k \in \mathcal{I}$. In particular, the order on $\mathcal{I}$ is preserved throughout $a$. It follows that generators in the standard decomposition of $a$, where each crossing in $a$ corresponds to a braid generator or its inverse, never swap two copies of $W$ on the right hand side of Proposition~\ref{prop:ind_mix}. It is therefore suggestive to denote the $k^\mathrm{th}$ occurrence of $W$ by $W_k$; we will adopt this convention whenever this case applies.

Let $(V,W,\tau,\varphi)$ be a left-separable mixed-braided vector space. Recall from \cite{h22} that separability of $(V,W)$ as a left-braided vector space is the necessary and sufficient condition for the $B_{n+1}$-representation on $\bigoplus_{i=1}^{n+1} V^{\otimes i-1} \otimes W \otimes V^{\otimes n-i+1}$ to behave analogously to that on $V^{\otimes n+1}$. That is, if we define
\[\varphi_{i,n} := (\mathrm{id}^{\otimes i-1} \otimes \varphi \otimes \mathrm{id}^{\otimes n-i-1}) \circ (\mathrm{id}^{\otimes i} \otimes \varphi \otimes \mathrm{id}^{\otimes n-i-2}) \circ \cdots \circ (\mathrm{id}^{\otimes n-2} \otimes \varphi)\]
for all $1 \le i \le n$, then the following diagram commutes for all $1 \le q \le n$, $a \in B_{q+1}$, and $1 \le i \le q+1$ with the choice of isomorphisms $\xi_{i,n} = \varphi_{i,n} \widetilde{\alpha_{i,n}}^{-1}$:
\[ \begin{tikzcd}
V^{\otimes p} \otimes (V^{\otimes i-1} \otimes W \otimes V^{q-i+1}) \otimes V^{\otimes n-p-q} \arrow[swap]{d}{\mathrm{id}^{\otimes p} \otimes \xi^{-1}_{i,q+1} \otimes \mathrm{id}^{\otimes n-p-q}} \arrow{r}{\xi_{p+i,n+1}^{-1}}  &[0.7cm] \widetilde{\alpha_{p+i,n+1}}(V^{\otimes n} \otimes W) \arrow{ddd}{a'} \\%
V^{\otimes p} \otimes \widetilde{\alpha_{i,q+1}} (V^{\otimes q} \otimes W) \otimes V^{\otimes n-p-q} \arrow[swap]{d}{\mathrm{id}^{\otimes p} \otimes a \otimes \mathrm{id}^{\otimes n-p-q}} \\
V^{\otimes p} \otimes \widetilde{\alpha_{\underline{a}(i),q+1}} (V^{\otimes q} \otimes W) \otimes V^{\otimes n-p-q} \arrow[swap]{d}{\mathrm{id}^{\otimes p} \otimes \xi_{\underline{a}(i),q+1} \otimes \mathrm{id}^{\otimes n-p-q}}\\
V^{\otimes p} \otimes (V^{\otimes \underline{a}(i)-1} \otimes W \otimes V^{q-\underline{a}(i)+1}) \otimes V^{\otimes n-p-q} & \arrow[swap]{l}{\xi_{\underline{a'}(p+i),n+1}} \widetilde{\alpha_{\underline{a'}(p+i),n+1}}(V^{\otimes n} \otimes W)
\end{tikzcd} \]
where the braid $a'$ is the natural inclusion of $a$ into the copy $B_{q+1} \le B_{n+1}$ consisting of braids that are only nontrivial on the $q+1$ strands starting with the $p+1^{\mathrm{st}}$.

A similar property is desired for the $B_{n+m}$-action on $\bigoplus_{\mathcal{I}} V^{\otimes i_1-1} \otimes W \otimes V^{\otimes i_2-i_1-1} \otimes W \otimes \dots \otimes W \otimes V^{\otimes n+m-i_m}$. That is, for a fixed $m \ge 1$ and any $n \ge 1$, $1 \le j \le q \le n+m$, and $a \in B_q$, the diagram

\begin{equation}\label{eq:comm_diag}
\begin{tikzcd}
Y^{\otimes p} \otimes (V^{\otimes j-1} \otimes W_k \otimes V^{q-j}) \otimes Y^{\otimes n+m-p-q} \arrow[swap]{d}{\mathrm{id}^{\otimes p} \otimes \xi^{-1}_{j,q} \otimes \mathrm{id}^{\otimes n+m-p-q}} \arrow{r}{\xi_{\mathcal{I},n+m}^{-1}} &[0.7cm] \widetilde{\alpha_{\mathcal{I},n+m}}(V^{\otimes n} \otimes W^{\otimes m}) \arrow{ddd}{a'}\\%
Y^{\otimes p} \otimes \widetilde{\alpha_{j,q}} (V^{\otimes q-1} \otimes W) \otimes Y^{\otimes n+m-p-q} \arrow[swap]{d}{\mathrm{id}^{\otimes p} \otimes a \otimes \mathrm{id}^{\otimes n+m-p-q}} \\
Y^{\otimes p} \otimes \widetilde{\alpha_{\underline{a}(j),q}} (V^{\otimes q-1} \otimes W) \otimes Y^{\otimes n+m-p-q} \arrow[swap]{d}{\mathrm{id}^{\otimes p} \otimes \xi_{\underline{a}(j),q} \otimes \mathrm{id}^{\otimes n+m-p-q}}\\
Y^{\otimes p} \otimes (V^{\otimes \underline{a}(j)-1} \otimes W_k \otimes V^{q-\underline{a}(j)}) \otimes Y^{\otimes n+m-p-q} & \arrow[swap]{l}{\xi_{\underline{a'}(\mathcal{I}),n+m}} \widetilde{\alpha_{\underline{a'}(\mathcal{I}),n+m}}(V^{\otimes n} \otimes W^{\otimes m})
\end{tikzcd}
\end{equation}

\noindent commutes where $p = i_k-j$, $Y$ denotes a copy of either $V$ or $W$, and the braid $a'$ is the natural inclusion of $a$ into the copy $B_q \le B_{n+m}$ consisting of braids that are only nontrivial on the $q$ strands starting with the $p+1^{\mathrm{st}}$. Roughly speaking, we want the inclusion of the subspace $\bigoplus_{j=1}^{q} V^{\otimes j-1} \otimes W_k \otimes V^{\otimes q-j}$ into $\bigoplus_{\mathcal{I}} V^{\otimes i_1-1} \otimes W_1 \otimes V^{\otimes i_2-i_1-1} \otimes W_2 \otimes \dots \otimes W_m \otimes V^{\otimes n+m-i_m}$ to be equivariant with respect to the braid action. The following proposition gives criteria to detect this property.

\begin{prop}\label{prop:comm_diag_cond}
Let $(V,W,\tau,\varphi)$ be a left-separable mixed-braided vector space. For $\mathcal{I} = (i_1,\dots,i_m)$, let $\varphi_{\mathcal{I},n+m} : V^{\otimes n} \otimes W^{\otimes m} \to V^{\otimes i_1-1} \otimes W_1 \otimes V^{\otimes i_2-i_1-1} \otimes W_2 \otimes \dots \otimes W_m \otimes V^{\otimes n+m-i_m}$ be defined by $\varphi_{\mathcal{I},n+m}:= \xi_{\mathcal{I},n+m}\widetilde{\alpha_{\mathcal{I},n+m}}$ for all $n \ge 1$. Then Diagram~\ref{eq:comm_diag} always commutes if and only if the following identities hold:
\begin{enumerate}
    \item $\varphi_{\mathcal{I},n+m} = \varphi_{i_m, n+m} \circ \cdots \circ (\varphi_{i_2,n+2} \otimes \mathrm{id}^{\otimes m-2}) \circ (\varphi_{i_1,n+1} \otimes \mathrm{id}^{\otimes m-1})$;
    \item $(\mathrm{id} \otimes \tau) \circ (\varphi \otimes \mathrm{id}) = (\varphi \otimes \mathrm{id}) \circ (\mathrm{id} \otimes \sigma_W) \circ (\tau \otimes \mathrm{id}) \circ (\mathrm{id} \otimes \sigma_W^{-1})$.
\end{enumerate}
\end{prop}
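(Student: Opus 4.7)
The plan is to establish the equivalence in both directions by reducing to algebraic identities on tensor factors.

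For the ``if'' direction, assume conditions (1) and (2) hold. Since both legs of Diagram~\ref{eq:comm_diag} are functorial in $a \in B_q$, it suffices to verify commutativity for $a$ ranging over a generating set of $B_q$, say $\sigma_1,\dots,\sigma_{q-1}$. Condition (1) allows me to rewrite $\xi_{\mathcal{I},n+m}^{-1}$ as a composition of single-$W$ separation maps $\varphi_{i_k,\cdot}^{-1}$ interleaved with the appropriate shuffle lifts, turning the outer part of the diagram into an iterated nesting of the corresponding $m=1$ diagrams. When $\sigma_i$ acts only on strands containing at most one $W_k$, the resulting statement reduces, after this unwinding, to the $m=1$ case established in \cite{h22} using separability of $(V,W)$ as a left-braided vector space. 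The genuinely new situation is when $\sigma_i$ crosses a $V$ past $W_k$ while $W_{k+1}$ sits adjacent to $W_k$, so that a factor of $\tau$ must be commuted past the separation of the neighboring $W_{k+1}$; condition (2) is precisely the algebraic identity that equates the two routes in this case.

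For the ``only if'' direction, assume the diagram commutes for all $a$, $n$, $q$, and $j$. Condition (1) reflects the coherence of the isomorphisms $\xi$ across different choices of $(n,q,j)$: by varying the embedded subgroup $B_q \le B_{n+m}$ and the position of $W_k$ within the middle block, one may compare the diagrams corresponding to embeddings that separate a single $W$ versus embeddings that jointly handle several, and the compatibility forced by commutativity collapses to the iterated factorization (1). Condition (2) is then extracted from a small case with two $W$'s adjacent to a $V$: taking $q$ large enough to encompass both $W_k$ and $W_{k+1}$, choosing $a$ to be a single braid generator that crosses $V$ past $W_k$, and using (1) to simplify the other separations, the commutativity of the diagram in this one case unravels to the identity (2).

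The principal obstacle is the combinatorial bookkeeping: tracking how a generator $a \in B_q$ lifts to a braid $a' \in B_{n+m}$ on the full induced representation via the inclusion of the $q$ strands starting at position $p+1$, how the underlying permutations $\underline{a}$ and $\underline{a'}$ reindex the tuple $\mathcal{I}$ and the position $j$ compatibly, and how the resulting braid action matches the successive applications of the separated braidings $\varphi$ and mixed braidings $\tau$ dictated by (1). Once this indexing is under control, each generator's verification reduces to the braid-algebraic identities collected in Definitions~\ref{defn:sep_lbvs} and~\ref{defn:mix_bvs}, with (2) supplying the single new relation they do not already entail.
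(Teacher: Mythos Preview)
Your plan matches the paper's approach: reduce to braid generators, then identify the algebraic identities on $\varphi_{\mathcal{I}}$, $\varphi$, $\tau$, $\sigma_W$ that are equivalent to commutativity. The paper makes one intermediate step explicit that you leave implicit: it first records, ``by brute force,'' three conditions (a)--(c) that are literally what commutativity says for each type of generator in Corollary~\ref{cor:act_ind_mix_gen}, and then proves (a)--(c) $\Leftrightarrow$ (1)--(2). In particular, (a) is the coherence relation $(\mathrm{id}^{\otimes p}\otimes\varphi_{j-p,i_k-p}\otimes\mathrm{id})\varphi_{\mathcal{I}}=\varphi_{\mathcal{I}_{k,j-i_k}}$, and the paper shows (a) $\Leftrightarrow$ (1) by an induction on the smallest $k$ with $i_k=n+k$. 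Your ``only if'' argument for (1) (``compare embeddings that separate a single $W$ versus embeddings that jointly handle several'') is pointing at this same coherence, but since the middle block of Diagram~\ref{eq:comm_diag} always contains exactly one $W$, you should say more precisely how two instances of the diagram with different $(q,j)$ combine to yield (a); the paper's induction is the clean way to do it.

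One small correction: the ``genuinely new'' case where (2) enters is not when $W_{k+1}$ sits adjacent to $W_k$, but when the global action routes through $\theta_{n,n+k}=\sigma_{n+k-1}\cdots\sigma_{n+1}\tau_n\sigma_{n+1}^{-1}\cdots\sigma_{n+k-1}^{-1}$, so the $\sigma_W$'s involved come from $W_1,\dots,W_{k-1}$, the $W$'s of \emph{smaller} index. Concretely, the paper extracts (2) from condition (c) at $n=1$, $\mathcal{I}=(1,3,4,\dots,m+1)$, where $\varphi_{\mathcal{I}}=\varphi\otimes\mathrm{id}^{\otimes m-1}$ and the identity $\tau_2\varphi_{\mathcal{I}}=\varphi_{\mathcal{I}}\theta_{1,3}$ restricts on $V\otimes W\otimes W$ to exactly (2). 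With that indexing fixed, your sketch is sound.
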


\begin{proof}
Since every braid action is decomposable into those of the generators, it suffices to study the commutativity of Diagram~\ref{eq:comm_diag} for all braid generators. By brute force, we observe that Diagram~\ref{eq:comm_diag} commutes for all braid generators if and only if the following identities hold:
\begin{enumerate}[label=(\alph*)]
    \item $(\mathrm{id}^{\otimes p} \otimes \varphi_{j-p,i_k-p} \otimes \mathrm{id}^{\otimes n+m-i_k}) \varphi_{\mathcal{I},n+m} = \varphi_{\mathcal{I}_{k,j-i_k},n+m}$, for $0 \le p < j$ and $i_{k-1} < j \le i_k$;
    
    \vspace{.05in}
    \item $\sigma_i \varphi_{\mathcal{I},n+m} = \begin{cases}
    \varphi_{\mathcal{I},n+m} \sigma_i \qquad \text{if } 1 \le i \le i_1-2\\
    \varphi_{\mathcal{I},n+m} \sigma_{i-k} \quad \text{if } i_k +1 \le i \le i_k-2\\
    \varphi_{\mathcal{I},n+m} \sigma_{i-m} \quad \text{if } i_m+1 \le i < n+m
    \end{cases}$
    \newline for $\sigma_i = \mathrm{id}^{\otimes i-1} \otimes \sigma \otimes \mathrm{id}^{\otimes n+m-i-1}$;
    
    \item $\tau_{i_k-1} \varphi_{\mathcal{I},n+m} = \varphi_{\mathcal{I},n+m} (\sigma_{i_k-k} \dots \sigma_{n-1} \theta_{n,n+k} \sigma_{n-1}^{-1} \dots \sigma_{i_k-k}^{-1})$, for $\tau_{i} = \mathrm{id}^{\otimes i-1} \otimes \tau \otimes \mathrm{id}^{\otimes n+m-i-1}$.
\end{enumerate}
We will show that these conditions are equivalent to equations (1-2).

First, we will prove that condition (a) is equivalent to formula (1) given the formula
\[\varphi_{i,n} = (\mathrm{id}^{\otimes i-1} \otimes \varphi \otimes \mathrm{id}^{\otimes n-i-1}) \circ (\mathrm{id}^{\otimes i} \otimes \varphi \otimes \mathrm{id}^{\otimes n-i-2}) \circ \cdots \circ (\mathrm{id}^{\otimes n-2} \otimes \varphi).\]
It is easy to verify that formula (1) for $\varphi_{\mathcal{I},n+m}$ satisfies condition (a). Conversely, we will prove formula (1) by induction on the smallest $k \ge 1$ such that $i_k = n+k$ (which implies that $i_r = n+r$ for all $k \le r \le m$). The base case $k = 1$ is trivial, as both sides are the identity. Suppose for all $\mathcal{I} = (i_1, \dots, i_{k-1}, n+k, \dots, n+m)$,
\[ \begin{array}{r l}
\varphi_{\mathcal{I},n+m} & = \varphi_{i_m, n+m} \circ \dots \circ (\varphi_{i_k,n+k} \otimes \mathrm{id}^{\otimes m-k}) \circ (\varphi_{i_{k-1},n+k-1} \otimes \mathrm{id}^{\otimes m-k+1}) \circ \dots \\[5pt]
& \quad \null \circ (\varphi_{i_1,n+1} \otimes \mathrm{id}^{\otimes m-1})\\[5pt]
& = (\varphi_{i_{k-1},n+k-1} \otimes \mathrm{id}^{\otimes m-k+1}) \circ \dots \circ (\varphi_{i_1,n+1} \otimes \mathrm{id}^{\otimes m-1}).
\end{array}\]
Observe that any $\mathcal{I'} = (i_1, \dots, i_{k-1}, i_k, n+k+1, \dots, n+m)$ can be written as $\mathcal{I}_{k,i_k-(n+k)}$, then by identity (a) we have
\[\begin{array}{r l}
     \varphi_{\mathcal{I'},n+m} & = \varphi_{\mathcal{I}_{k,i_k-(n+k)},n+m} = (\varphi_{i_k,n+k} \otimes \mathrm{id}^{\otimes n+m-(n+k)}) \varphi_{\mathcal{I},n+m}\\[5pt]
     & = (\varphi_{i_k,n+k} \otimes \mathrm{id}^{\otimes m-k}) \circ (\varphi_{i_{k-1},n+k-1} \otimes \mathrm{id}^{\otimes m-k+1}) \circ \dots \circ (\varphi_{i_1,n+1} \otimes \mathrm{id}^{\otimes m-1})
\end{array}\]
which proves the claim for case $k+1$. With this identification, observe that condition (c) gives $\tau_2 \varphi_{(1,3,\dots,m+1),m+1} = \varphi_{(1,3,\dots,m+1),m+1} \theta_{1,3}$, which is identity (2). We have thus proved the forward direction.

The converse can be proved using a similar induction argument. Condition (b) is implied by formula (1) and the first three cases of Corollary~\ref{cor:act_ind_mix_gen}. Meanwhile, condition (c) follows from separability of $(V,W,\tau,\varphi)$, equation (2), and the last case of the same corollary.
\end{proof}

Left-separability of a mixed-braided vector space $(V,W)$ and Equation~\ref{prop:comm_diag_cond}.2 are integrally connected to the existence of a braid structure on the direct sum $V \oplus W$.

\begin{prop}
Let $V$ and $W$ be finite dimensional $\mathbf{k}$-vector spaces, and let $X = V \oplus W$. Suppose there is an automorphism $\sigma_X$ of $(V \oplus W)^{\otimes 2} \cong V^{\otimes 2} \oplus (V \otimes W) \oplus (W \otimes V) \oplus W^{\otimes 2}$ defined summand-wise by isomorphisms $\sigma_V: V^{\otimes 2} \to V^{\otimes 2}$, $\varphi: V \otimes W \to W \otimes V$, $\psi: W \otimes V \to V \otimes W$, and $\sigma_W: W^{\otimes 2} \to W^{\otimes 2}$.
\begin{enumerate}
    \item If $(X,\sigma_X)$ is a braided vector space, then $((V,\sigma_V),(W,\sigma_W),\tau,\varphi)$ is a left-separable mixed-braided vector space, where $\tau = \psi\varphi$, that satisfies Equation~\ref{prop:comm_diag_cond}.2;
    
    \item A weak version of the converse holds: if $(V,W,\tau,\varphi)$ is a left-separable mixed-braided vector space that satisfies Equation~\ref{prop:comm_diag_cond}.2 and
    \[(\sigma_W \otimes \mathrm{id}) \circ (\mathrm{id} \otimes \varphi) \circ (\varphi \otimes \mathrm{id}) = (\mathrm{id} \otimes \varphi) \circ (\varphi \otimes \mathrm{id}) \circ (\mathrm{id} \otimes \sigma_W)\]
    then $(X,\sigma_X)$ is a braided vector space.
\end{enumerate}
\end{prop}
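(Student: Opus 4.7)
The plan is to decompose $X^{\otimes 3} = (V \oplus W)^{\otimes 3}$ into its eight direct summands indexed by words in $\{V, W\}^3$. With $\sigma_X$ given summand-wise via $\sigma_V, \varphi, \psi, \sigma_W$, the braid equation $(\sigma_X \otimes \mathrm{id})(\mathrm{id} \otimes \sigma_X)(\sigma_X \otimes \mathrm{id}) = (\mathrm{id} \otimes \sigma_X)(\sigma_X \otimes \mathrm{id})(\mathrm{id} \otimes \sigma_X)$ decomposes summand-by-summand into eight ``hexagonal'' identities: the pure summands $V^{\otimes 3}$ and $W^{\otimes 3}$ give the braid equations for $\sigma_V$ and $\sigma_W$, and each of the six mixed summands yields a relation among $\sigma_V, \sigma_W, \varphi$, and $\psi$.

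\textbf{Forward direction.} Reading off the $V^{\otimes 2} \otimes W$ summand gives Definition~\ref{defn:sep_lbvs}(1), and the $V \otimes W^{\otimes 2}$ summand gives the extra hexagonal identity featured in (2). Defining $\tau := \psi \varphi$ so that $\tau$ is precisely the $\sigma_X^2$ ``double twist'' on a $V \otimes W$ pair, the mixed-braid-group relations in Definition~\ref{defn:mix_bvs}(1),(2) correspond to the $B_3$-identity $\sigma_1 \sigma_2^2 \sigma_1 \sigma_2^2 = \sigma_2^2 \sigma_1 \sigma_2^2 \sigma_1$ applied to the subspaces $V^{\otimes 2} \otimes W$ and $V \otimes W^{\otimes 2}$, respectively; since the braid equation on $X^{\otimes 3}$ automatically promotes $\sigma_X$ to a $B_n$-action on $X^{\otimes n}$, these follow. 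Definition~\ref{defn:mix_bvs}(3) is precisely Manfredini's relation for $B_{2,2} \subset B_4$ applied to $V^{\otimes 2} \otimes W^{\otimes 2} \subset X^{\otimes 4}$, and Definition~\ref{defn:sep_lbvs}(2) together with Equation~\ref{prop:comm_diag_cond}.2 correspond to the $B_3$-identities $\sigma_1^2 \sigma_2 = \sigma_2 \sigma_1 \sigma_2^2 \sigma_1^{-1}$ and $\sigma_2^2 \sigma_1 = \sigma_1 \sigma_2 \sigma_1^2 \sigma_2^{-1}$, both elementary consequences of the $B_3$ braid relation.

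\textbf{Converse direction.} With $\psi$ forced to equal $\tau \varphi^{-1}$, the strategy is to verify all eight summand identities directly. Four are almost immediate: the pure summands follow from $\sigma_V, \sigma_W$ being braidings, the $V^{\otimes 2} \otimes W$ identity is Definition~\ref{defn:sep_lbvs}(1), and the $V \otimes W^{\otimes 2}$ identity is the extra hexagonal hypothesis. The $V \otimes W \otimes V$ summand is obtained by substituting $\psi = \tau \varphi^{-1}$ and combining Definition~\ref{defn:sep_lbvs}(1) with (2); the $W \otimes V \otimes W$ summand follows analogously from the extra hexagonal identity and Equation~\ref{prop:comm_diag_cond}.2. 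For the remaining summands $W \otimes V \otimes V$ and $W \otimes W \otimes V$, each of which contains two copies of $\psi$, I would rewrite them as long compositions in $\tau$ and $\varphi^{\pm 1}$, then successively eliminate the $\varphi^{\pm 1}$'s using Definition~\ref{defn:sep_lbvs}(1),(2) (resp.\ the extra hexagonal together with Equation~\ref{prop:comm_diag_cond}.2) until the remaining identity collapses to Definition~\ref{defn:mix_bvs}(1) (resp.\ (2)).

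\textbf{Main obstacle.} The principal difficulty is the algebraic bookkeeping for the two heaviest summands of the converse, where each $\psi$ is replaced by the two-symbol composition $\tau \varphi^{-1}$; the available axioms must be applied in just the right order to pair up and cancel the $\varphi^{\pm 1}$'s. This asymmetry between the given $\varphi$ and the derived $\psi$ also explains the word ``weak'' in (2): without the extra hexagonal hypothesis, the $V \otimes W^{\otimes 2}$ summand identity is not accessible from the left-separable mixed-braided structure and Equation~\ref{prop:comm_diag_cond}.2 alone.
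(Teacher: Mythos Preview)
Your proposal is correct and follows the same core strategy as the paper: decompose the braid equation on $X^{\otimes 3}$ into the eight summand identities (a)--(h), then match them against the axioms of a left-separable mixed-braided vector space together with Equation~\ref{prop:comm_diag_cond}.2 and the extra hexagonal (e).

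The one noteworthy difference is in the forward direction. The paper verifies Equation~\ref{prop:comm_diag_cond}.2, Definition~\ref{defn:mix_bvs}.2, and Definition~\ref{defn:mix_bvs}.3 by explicit manipulations with $\varphi,\psi,\sigma_V,\sigma_W$ (and invokes \cite[Prop.~2.17]{h22} for (a)--(d)). You instead observe that once $(X,\sigma_X)$ is braided, the full $B_n$-action on $X^{\otimes n}$ is available, and each required identity is literally a $B_n$-relation restricted to the appropriate summand (e.g.\ Definition~\ref{defn:mix_bvs}.3 as Manfredini's $B_{2,2}$-relation on $V^{\otimes 2}\otimes W^{\otimes 2}$, and Definition~\ref{defn:sep_lbvs}.2 and Equation~\ref{prop:comm_diag_cond}.2 as the $B_3$-identities $\sigma_1^2\sigma_2=\sigma_2\sigma_1\sigma_2^2\sigma_1^{-1}$ and $\sigma_2^2\sigma_1=\sigma_1\sigma_2\sigma_1^2\sigma_2^{-1}$). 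This is a cleaner and more conceptual packaging that avoids the ad~hoc computations. For the converse your sketch coincides with the paper's (which in fact only writes out part~(1) and declares part~(2) ``very similar''); your pairing of summands---(c) from (b)$+$Definition~\ref{defn:sep_lbvs}.2, (f) from (e)$+$Equation~\ref{prop:comm_diag_cond}.2, and (d),(g) reducing to Definition~\ref{defn:mix_bvs}.1,.2---is exactly right.
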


\begin{proof}
This proof rests on the following key observation: $(X,\sigma_X)$ is a braided vector space if and only if the braid equation holds on each of the eight direct summands of $(V \oplus W)^{\otimes 3}$. That is,
\begin{enumerate}[label=(\alph*)]
    \item $(\sigma_V \otimes \mathrm{id}) \circ (\mathrm{id} \otimes \sigma_V) \circ (\sigma_V \otimes \mathrm{id}) = (\mathrm{id} \otimes \sigma_V) \circ (\sigma_V \otimes \mathrm{id}) \circ (\mathrm{id} \otimes \sigma_V)$; 
    \item $(\varphi \otimes \mathrm{id}) \circ (\mathrm{id} \otimes \varphi) \circ (\sigma_V \otimes \mathrm{id}) = (\mathrm{id} \otimes \sigma_V) \circ (\varphi \otimes \mathrm{id}) \circ (\mathrm{id} \otimes \varphi)$; 
    \item $(\psi \otimes \mathrm{id}) \circ (\mathrm{id} \otimes \sigma_V) \circ (\varphi \otimes \mathrm{id}) = (\mathrm{id} \otimes \varphi) \circ (\sigma_V \otimes \mathrm{id}) \circ (\mathrm{id} \otimes \psi)$; 
    \item $(\sigma_V \otimes \mathrm{id}) \circ (\mathrm{id} \otimes \psi) \circ (\psi \otimes \mathrm{id}) = (\mathrm{id} \otimes \psi) \circ (\psi \otimes \mathrm{id}) \circ (\mathrm{id} \otimes \sigma_V)$; 
    \item $(\sigma_W \otimes \mathrm{id}) \circ (\mathrm{id} \otimes \varphi) \circ (\varphi \otimes \mathrm{id}) = (\mathrm{id} \otimes \varphi) \circ (\varphi \otimes \mathrm{id}) \circ (\mathrm{id} \otimes \sigma_W)$;
    \item $(\varphi \otimes \mathrm{id}) \circ (\mathrm{id} \otimes \sigma_W) \circ (\psi \otimes \mathrm{id}) = (\mathrm{id} \otimes \psi) \circ (\sigma_W \otimes \mathrm{id}) \circ (\mathrm{id} \otimes \varphi)$;
    \item $(\psi \otimes \mathrm{id}) \circ (\mathrm{id} \otimes \psi) \circ (\sigma_W \otimes \mathrm{id}) = (\mathrm{id} \otimes \sigma_W) \circ (\psi \otimes \mathrm{id}) \circ (\mathrm{id} \otimes \psi)$;
    \item $(\sigma_W \otimes \mathrm{id}) \circ (\mathrm{id} \otimes \sigma_W) \circ (\sigma_W \otimes \mathrm{id}) = (\mathrm{id} \otimes \sigma_W) \circ (\sigma_W \otimes \mathrm{id}) \circ (\mathrm{id} \otimes \sigma_W)$. 
\end{enumerate}
By Proposition 2.17 of \cite{h22}, conditions (a-d) are equivalent to the fact that $(V,W,\sigma_V,\tau,\varphi)$ forms a separable left-braided vector space, and condition (h) is equivalent to $(W,\sigma_W)$ being a braided vector space. The added relation in part (2) of the statement is precisely condition (e), so it suffices to show that conditions (f-g) are equivalent to 
Equation~\ref{prop:comm_diag_cond}.2 and the extra braid relations~\ref{defn:mix_bvs}.2 and ~\ref{defn:mix_bvs}.3, under the assumptions of (a-e, h).
The arguments for both directions are very similar; here we will only show the proof of part (1) of the statement. For the rest of this proof, we will use the notation $\sigma^V_i := \mathrm{id}^{\otimes i-1} \otimes \sigma_V \otimes \mathrm{id}^{\otimes *}$, and analogous notations for $\sigma_W$, $\tau$, $\varphi$, and $\psi$.

Assume identities (a-h) hold. First, we will show that conditions (e-f) imply Equation~\ref{prop:comm_diag_cond}.2. Apply (e) and (f) subsequently to the following:
\[\varphi_1^{-1} \tau_2 \varphi_1 \sigma^W_2 = \varphi_1^{-1} \psi_2 (\varphi_2 \varphi_1 \sigma^W_2) = \varphi_1^{-1} (\psi_2 \sigma^W_1 \varphi_2) \varphi_1 = (\varphi_1^{-1} \varphi_1) \sigma^W_2 \psi_1 \varphi_1 = \sigma^W_2 \tau_1.\]
It follows that $\tau_2 \varphi_1 = \varphi_1 \sigma^W_2 \tau_1 (\sigma^W_2)^{-1}$, so Equation~\ref{prop:comm_diag_cond}.2 holds. We then show conditions (e-g) imply Equation~\ref{defn:mix_bvs}.2 as follows:
\[\begin{array}{r l}
     \tau_1 \sigma^W_2 \tau_1 \sigma^W_2 & = \psi_1 (\varphi_1 \sigma_2^W \psi_1) \varphi_1 \sigma_2^W = (\psi_1 \psi_2 \sigma_1^W) (\varphi_2 \varphi_1 \sigma_2^W)\\[5pt]
     & = \sigma^W_2 \psi_1 (\psi_2 \sigma_1^W \varphi_2) \varphi_1 = \sigma^W_2 (\psi_1 \varphi_1) \sigma_2^W (\psi_1 \varphi_1) = \sigma^W_2 \tau_1 \sigma^W_2 \tau_1.
\end{array}\]
Finally, we show that Equation~\ref{prop:comm_diag_cond}.2 and Equation~\ref{defn:sep_lbvs}.2 (separability of the left-braided vector space $(V,W,\sigma_V,\tau,\varphi)$) imply Equation~\ref{defn:mix_bvs}.3:
\[\begin{array}{l}
\left( \sigma^V_1 \tau_2 (\sigma^V_1)^{-1} \right) \left(\sigma^W_3 \tau_2 (\sigma^W_3)^{-1} \right) = \left( \varphi_2^{-1} \tau_1 \varphi_2 \right) \left( \varphi_2^{-1} \tau_3 \varphi_2 \right) = \varphi_2^{-1} \tau_1 \tau_3 \varphi_2 \\[5pt]
= \varphi_2^{-1} \tau_3 \tau_1 \varphi_2 = \left( \varphi_2^{-1} \tau_3 \varphi_2 \right) \left( \varphi_2^{-1} \tau_1 \varphi_2 \right) = \left( \sigma^W_3 \tau_2 (\sigma^W_3)^{-1} \right) \left( \sigma^V_1 \tau_2 (\sigma^V_1)^{-1} \right).
\end{array}\]
This completes the proof of part (1).
\end{proof}


\section{Cellular homology of configuration spaces}\label{sec:H*_conf}

In this section, we will develop a framework to compute the cellular homology of $\mathrm{Conf}_n(\mathbb{C}_m)$ with coefficients in local systems. As our main topological result, we will identify the (co)homology of $\mathrm{Conf}_n(\mathbb{C}_m)$ with coefficients in the local system associated to the $B_n(\mathbb{C}_m)$-representation on $V^{\otimes n}\otimes W^{\otimes m}$, with the (co)homology of certain bimodules $\mathfrak{M}$ defined over the quantum shuffle algebra $\mathfrak{A}(V_\epsilon)$ (Definition~\ref{defn:qsa}). As an application, we will compute the homology of $\mathrm{Conf}_n(\mathbb{C}_m)$ and in turn prove a vanishing range for the homology of $\mathrm{Conf}_{n,m}(\mathbb{C})$ with coefficients in rank-$1$ local systems associated to characters of the resultant.


\subsection{Cellular chain complex with local coefficients}\label{subsec:fnf_twisted}

Let $\mathcal{I} = (i_1, \dots, i_m)$ where $1 \le i_1 < \dots < i_m \le n+m$, and $\alpha_\mathcal{I}$ the associated $(n,m)$-shuffle. Define a function of sets $\eta_\mathcal{I}: B_{n+m} \to B_{n,m}$ by sending $a$ to $\widetilde{\underline{a}_{\alpha_\mathcal{I}}}^{-1} a \widetilde{\alpha_\mathcal{I}}$, i.e., the unique element $b$ such that $a \widetilde{\alpha_\mathcal{I}} = \widetilde{\alpha'} b$ for some $\alpha' \in \mathrm{Sh}(n,m)$. This map is not a group homomorphism; however, it satisfies the composition relation $\eta_\mathcal{I} (ba) = \eta_{\underline{a}(\mathcal{I})}(b) \eta_\mathcal{I} (a)$. When restricting the domain to only braids with pairwise parallel $i_j^\mathrm{th}$ and $i_k^\mathrm{th}$ strands for all distinct $i_j, i_k \in \mathcal{I}$, we observed that the range is in fact $B_n(\mathbb{C}_m)$. Since our braids always arise from lifting shuffles that preserve the order on the set $\mathcal{I}$ of overall positions of the fixed points in a configuration, this assumption applies to our discussion below.

Let $L$ be a representation of $B_n(\mathbb{C}_m)$, and $\mathcal{L}$ be the associated local system over Conf$_{n}(\mathbb{C}_m)$. Since $\mathcal{L}$ trivializes on the open cells of the Fox-Neuwirth stratification for Conf$_n (\mathbb{C}_m)$, the cellular chain complex with local coefficients $C_*(\mathrm{Conf}_*(\mathbb{C}_m) \cup \{\infty\}, \{\infty\}; \mathcal{L})$ is isomorphic to $D(n,m)_* \otimes L$ as graded groups. The differential of $D(n,m)_* \otimes L$ which incorporates the braid action on $L$ is defined by
\[\begin{array}{r l}
    \displaystyle d[(\lambda, I, J) \otimes \ell)] & = \displaystyle\sum_{i = 1}^{i_1-2} (-1)^{i-1} \Bigg[ (\rho^i, I_1, J) \otimes \sum_{\gamma_i} (-1)^{|\gamma_i|} \eta_\mathcal{I}(\widetilde{\gamma_i})(\ell) \Bigg] \\[15pt]
    & \hspace{-0.5in} + \displaystyle\sum_{k=1}^{m-1} \sum_{i = i_k+1}^{i_{k+1}-2} (-1)^{i-1} \Bigg[ (\rho^i, I_{k+1}, J) \otimes \sum_{\gamma_i} (-1)^{|\gamma_i|} \eta_\mathcal{I}(\widetilde{\gamma_i})(\ell) \Bigg]\\[15pt]
    & \hspace{-0.5in} + \displaystyle\sum_{i = i_m+1}^{q-n+m-1} (-1)^{i-1} \Bigg[ (\rho^i, I, J) \otimes \sum_{\gamma_i} (-1)^{|\gamma_i|} \eta_\mathcal{I}(\widetilde{\gamma_i})(\ell) \Bigg] \\[15pt]
    & \hspace{-0.5in} + \displaystyle\sum_{k=1}^m (-1)^{i_k-2} \displaystyle\sum_{h=0}^{\lambda_{i_k-1}} \Bigg[ (\rho^{i_k-1}, I_k, J_{k,h}) \otimes \sum_{\gamma_{i_k-1,h}} (-1)^{|\gamma_{i_k-1,h}|} \eta_\mathcal{I}(\widetilde{\gamma_{i_k-1,h}})(\ell) \Bigg]\\[15pt]
    & \hspace{-0.5in} + \displaystyle\sum_{k=1}^m (-1)^{i_k-1}\displaystyle\sum_{h=0}^{\lambda_{i_k+1}} \Bigg[ (\rho^{i_k}, I_{k+1}, J_{k,h}) \otimes \sum_{\gamma_{i_k,h}} (-1)^{|\gamma_{i_k,h}|} \eta_\mathcal{I}(\widetilde{\gamma_{i_k,h}})(\ell) \Bigg]
\end{array} \]
where $\mathcal{I} = (\iota_1, \dots,\iota_m)$ with $\iota_k := j_k+1+\sum^{i_k-1}_{i = 1} \lambda_i$ the overall position of $z_k$ in the configuration $(\lambda,I,J)$; $\gamma_{i_k-1,h}$ runs over all $((\lambda_{i_k-1},h),\lambda_{i_k}, j_k)$-shuffles; $\gamma_{i_k,h}$ runs over all $(\lambda_{i_k}, (\lambda_{i_k+1},h),j_k)$-shuffles; and $\gamma_i$ runs over all $(\lambda_i, \lambda_{i+1})$-shuffles for all $i \ne i_k-1, i_k$. The lift $\widetilde{\gamma_i}$ (defined similarly for $\widetilde{\gamma_{i_k-1,h}}$ and $\widetilde{\gamma_{i_k,h}}$) in this differential is the lift of the shuffle $\gamma_i$ to the copy $B_{\lambda_i + \lambda_{i+1}} \le B_{n+m}$ consisting of braids that are only nontrivial on the $\lambda_i + \lambda_{i+1}$ strands starting with the $\lambda_1 + \dots + \lambda_{i-1} +1^\mathrm{st}$. We prove the main structural theorem of this paper below.

\begin{thm}\label{thm:hom_compactification}
There is an isomorphism
\[H_* (\mathrm{Conf}_{n}(\mathbb{C}_m) \cup \{ \infty \}, \{\infty\} ; \mathcal{L}) \cong H_*(D(n,m)_* \otimes L).\]
\end{thm}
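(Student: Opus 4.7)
The plan is to exhibit $D(n,m)_* \otimes L$ as the cellular chain complex of $\mathrm{Conf}_n(\mathbb{C}_m) \cup \{\infty\}$ relative to $\{\infty\}$ with coefficients in the local system $\mathcal{L}$, via the general theory of cellular chains with local coefficients. Since $\mathcal{L}$ trivializes on each open cell $e_{(\lambda,I,J)}$ of the stratification in Section~\ref{subsec:fnf_Cm} (each cell is a product of Euclidean factors, hence simply connected), this general theory says that once we pick a basepoint $p_{(\lambda,I,J)}$ in every cell and a trivialization $\mathcal{L}|_{e_{(\lambda,I,J)}} \cong L$, the cellular chain complex is $\bigoplus_{(\lambda,I,J)} L$ with a differential whose $(\lambda,I,J)$-to-$(\mu,I',J')$ component is the usual integer incidence number twisted by the parallel transport from $p_{(\lambda,I,J)}$ to $p_{(\mu,I',J')}$ along an attaching path.

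First, I would choose basepoints in each cell using the lexicographic ordering: place the $\iota$-th point at a standard location depending only on its column index, vertical position within the column, and the indexing data $(\lambda, I, J)$. Then I would recall from the untwisted complex $D(n,m)_*$ that the integer incidence number between $(\lambda,I,J)$ and a coarsening-type boundary cell is obtained by summing $(-1)^{|\gamma|}$ over all shuffles $\gamma$ of the appropriate form (an ordinary $(p,q)$-shuffle in cases (1)-(3), a $((\lambda_{i_k-1},h),\lambda_{i_k},j_k)$-shuffle in case (4), and a $(\lambda_{i_k},(\lambda_{i_k+1},h),j_k)$-shuffle in case (5)), with each $\gamma$ corresponding geometrically to one way of interleaving points of two adjacent columns as they horizontally merge. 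The attaching path associated with $\gamma$ is precisely the lift $\widetilde{\gamma}$ of $\gamma$ to the braid group $B_{n+m}$ of the underlying space $\mathrm{Conf}_{n+m}(\mathbb{C})$ into which we have embedded $\mathrm{Conf}_n(\mathbb{C}_m)$.

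The key observation is that this attaching path must be reinterpreted as a loop in $\mathrm{Conf}_n(\mathbb{C}_m)$, not in $\mathrm{Conf}_{n+m}(\mathbb{C})$. Because the basepoints of the source and target cells are chosen so that the fixed points $z_1,\dots,z_m$ occupy the prescribed overall positions $\mathcal{I}$ and $\underline{\widetilde{\gamma}}(\mathcal{I})$ respectively, and because $\widetilde{\gamma}$ has pairwise parallel strands at the positions indexed by $\mathcal{I}$, the corresponding loop in $\mathrm{Conf}_n(\mathbb{C}_m)$ is obtained by prepending $\widetilde{\alpha_\mathcal{I}}$ and appending $\widetilde{\alpha'}^{-1}$ for $\alpha' = \underline{\widetilde{\gamma}}_{\alpha_\mathcal{I}}$, i.e., it is exactly $\eta_\mathcal{I}(\widetilde{\gamma}) \in B_n(\mathbb{C}_m)$. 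This is where the discussion in Section~\ref{subsec:br_subgps} about straightening the fixed strands when the underlying permutation preserves $\mathcal{I}$ is invoked. Plugging this parallel transport into the formula for the boundary operator recovers precisely the differential on $D(n,m)_* \otimes L$ written above the theorem.

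The main obstacle I expect is the bookkeeping for boundaries of types (4) and (5), where a fixed column merges with an adjacent free column and the overall position of $z_k$ in the target cell shifts to $\iota_k - \lambda_{i_k-1}+h$ or $\iota_k + h$. One must verify that the chosen basepoints are compatible with this shift, that the shuffles $((p,h),q,j)$ and $(p,(q,h),j)$ are precisely those whose lifts sends $\mathcal{I}$ to the shifted tuple, and that the sign conventions coming from the induced orientation of boundary strata (cf.\ \cite{gs12}) agree with the factors $(-1)^{i_k-2}$ and $(-1)^{i_k-1}$. Once this is settled, the identity $d^2=0$ for the twisted differential reduces to the untwisted statement plus the composition relation $\eta_\mathcal{I}(ba) = \eta_{\underline{a}(\mathcal{I})}(b)\eta_\mathcal{I}(a)$ already noted, and thus to the shuffle identities used in Proposition~4.7 of \cite{h22}. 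With the twisted cellular chain complex so identified, the isomorphism of the theorem follows from standard cellular homology with local coefficients.
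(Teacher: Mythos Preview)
Your proposal is correct and follows essentially the same approach as the paper. The only difference is one of phrasing: the paper works with the cellular chain complex $\widetilde{D(n,m)_*}$ of the universal cover, setting up an explicit labelling system $((\lambda,I,J),b)$ with $b\in B_n(\mathbb{C}_m)$ starting from the top-dimensional cells, whereas you use the equivalent language of basepoints and parallel transport in the local-coefficient formalism; in both cases the crucial step is the identification of the attaching monodromy as $\eta_\mathcal{I}(\widetilde{\gamma})$ and the consistency check via the composition relation $\eta_\mathcal{I}(ba)=\eta_{\underline{a}(\mathcal{I})}(b)\,\eta_\mathcal{I}(a)$.
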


\begin{proof}
Our argument will generalize of the proof of Theorem 4.8 of \cite{h22}. Let $\widetilde{D(n,m)_*}$ be the cellular chain complex of the universal cover on $\mathrm{Conf}_n(\mathbb{C}_m)$ obtained by lifting the Fox-Neuwirth cells. It suffices to describe an identification
\[ \widetilde{D(n,m)_q} \cong \mathbb{Z} \{ (((\lambda_1, \dots, \lambda_{q-n+m}),I,J),b)|b\in B_n(\mathbb{C}_m) \} \]
as right $B_n(\mathbb{C}_m)$-representations which gives the desired description of the differentials.

The top dimensional cells of $\widetilde{D(n,m)_*}$ occur when $q = 2n$ and have the general form $(((1,\dots,1),I,(0,\dots,0)),b)$ for some $I = (i_1, \dots,i_m)$. Consider the codimension-1 faces of this cell obtained by combining the $i^\mathrm{th}$ and $i+1^\mathrm{st}$ columns, i.e., putting the $i^\mathrm{th}$ and $i+1^\mathrm{st}$ points on the same vertical line. There are two main outcomes of this operation: either the $i^\mathrm{th}$ point lies below the $i+1^\mathrm{st}$ point, or vice versa. Each of these are divided into subcases, depending on whether a fixed point is involved. Recall that for any configuration in $\mathrm{Conf}_n(\mathbb{C}_m)$, the lexicographic order of points in the configuration is obtained by indexing them from bottom to top for each subsequent column starting with the leftmost one. We then label the braid element of a face based on its effect on this order of points in the configuration as follows. If the lexicographic order is preserved, we apply $\eta_I(\mathrm{id}) = \mathrm{id}$ to $b$ on the left; the codimension-1 faces in this case are labelled by:
\begin{center}
\addtolength{\leftskip}{-2cm}
\addtolength{\rightskip}{-2cm}
{\renewcommand{\arraystretch}{1.5}%
\begin{tabular}{| c | c |}
\hline
 Case & $i^\mathrm{th}$ point lies below $i+1^\mathrm{st}$ point \\
 \hline
 $1 \le i < i_1-1$ & $(((1,\dots,1,2^{(i)},1,\dots,1),I_1,(0,\dots,0)),b)$ \\
 \hline
 $i_k < i < i_{k+1}-1$ $(1 \le k < m)$ & $(((1,\dots,1,2^{(i)},1,\dots,1),I_{k+1},(0,\dots,0)),b)$ \\
 \hline
 $i_m < i < n+m$ & $(((1,\dots,1,2^{(i)},1,\dots,1),I,(0,\dots,0)),b)$ \\
 \hline
 $i = i_k-1$ $(1\le k \le m)$ & $(((1,\dots,1,2^{(i_k-1)},1,\dots,1),I_k,(0,\dots,0,1^{(k)},0,\dots,0)),b)$ \\
 \hline
 $i = i_k$ $(1 \le k \le m)$ & $(((1,\dots,1,2^{(i_k)},1,\dots,1),I_{k+1},(0,\dots,0)),b)$ \\
 \hline
\end{tabular}}
\end{center}
where $(1,\dots,1,2^{(i)},1,\dots,1)$ denotes the composition of $n+m$ where the only non-1 part is $\lambda_i = 2$, and $(0,\dots,0,1^{(k)},0,\dots,0)$ denotes an $m$-tuple where the only nonzero entry is $1$ at the $k^\mathrm{th}$ coordinate. On the other hand, if the lexicographic order changes, we apply $\eta_I(\widetilde{\gamma})$ where $\gamma$ is the corresponding permutation. The labelling system in this case is given by
\begin{center}
\addtolength{\leftskip}{-2cm}
\addtolength{\rightskip}{-2cm}
{\renewcommand{\arraystretch}{1.5}%
\begin{tabular}{| c | c |}
\hline
 Case & $i^\mathrm{th}$ point lies above $i+1^\mathrm{st}$ point \\
 \hline
 $1 \le i < i_1-1$ & $(((1,\dots,1,2^{(i)},1,\dots,1),I_1,(0,\dots,0)),\eta_I(\sigma_i) b)$ \\
 \hline
 $i_k < i < i_{k+1}-1$ $(1 \le k < m)$ & $(((1,\dots,1,2^{(i)},1,\dots,1),I_{k+1},(0,\dots,0)),\eta_I (\sigma_i) b)$ \\
 \hline
 $i_m < i < n+m$ & $(((1,\dots,1,2^{(i)},1,\dots,1),I,(0,\dots,0)),\eta_I(\sigma_i)b)$ \\
 \hline
 $i = i_k-1$ $(1\le k \le m)$ & $(((1,\dots,1,2^{(i_k-1)},1,\dots,1),I_k,(0,\dots,0)),\eta_I(\sigma_{i_k-1})b)$ \\
 \hline
 $i = i_k$ $(1 \le k \le m)$ & $(((1,\dots,1,2^{(i_k)},1,\dots,1),I_{k+1},(0,\dots,0,1^{(k)},0,\dots,0)),\eta_I(\sigma_{i_k})b)$ \\
 \hline
\end{tabular}}
\end{center}
Note that this choice of labelling is consistent with the right action of $B_n(\mathbb{C}_m)$.

More generally, a generic cell $((\lambda,I,J),b)$ corresponds to the face of the top dimensional cell $(((1,\dots,1),\mathcal{I},(0,\dots,0)),b)$ obtained by putting points into columns according to the configuration $\lambda$ while preserving the lexicographic order of points; here $\mathcal{I} = (\iota_1, \dots, \iota_m)$ records the overall positions of the fixed points in the configuration, i.e., $\iota_k = j_k+1+\sum_{i=1}^{i_k-1} \lambda_i$ for $1 \le k \le m$. However, if we arrange the face so that the lexicographic order is altered by a permutation $\gamma$, we need to multiply the element of $B_n(\mathbb{C}_m)$ in the cell's label on the left with $\eta_\mathcal{I}(\widetilde{\gamma})$. Note that this labelling system is compatible with the decomposition of braid elements into generators precisely because $\eta_\mathcal{I} (ba) = \eta_{\underline{a}(\mathcal{I})}(b) \eta_\mathcal{I} (a)$ for any $a,b \in B_{n+m}$.

It follows from this labelling system that the face maps of the complex $\widetilde{D(n,m)_*}$ are given by
\[\begin{array}{l}
d_i ((\lambda,I,J),b) = \\[5pt]
\hspace{.3in} \begin{cases}
\displaystyle \sum_{\gamma_i}(-1)^{|\gamma_i|}((\rho^i,I_1,J),\eta_\mathcal{I}(\widetilde{\gamma_i}) b) \hspace{3.5cm} \hfill 1 \le i < i_1-1\\[15pt]
\displaystyle \sum_{\gamma_i}(-1)^{|\gamma_i|}((\rho^i,I_{k+1},J),\eta_\mathcal{I}(\widetilde{\gamma_i}) b) \hfill i_k < i < i_{k+1}-1\\[15pt]
\displaystyle \sum_{\gamma_i}(-1)^{|\gamma_i|}((\rho^i,I,J),\eta_\mathcal{I}(\widetilde{\gamma_i}) b) \hfill i_m < i \le q-n+m-1\\[15pt]
\displaystyle \sum_{h=0}^{\lambda_{i_k-1}}\sum_{\gamma_{i_k-1,h}}(-1)^{|\gamma_{i_k-1,h}|}((\rho^{i_k-1},I_k,J_{k,h}),\eta_\mathcal{I}(\widetilde{\gamma_{i_k-1,h}}) b) \hfill i = i_k-1\\[15pt]
\displaystyle \sum_{h=0}^{\lambda_{i_k+1}}\sum_{\gamma_{i_k,h}}(-1)^{|\gamma_{i_k,h}|}((\rho^{i_k},I_{k+1},J_{k,h}),\eta_\mathcal{I}(\widetilde{\gamma_{i_k,h}}) b) \hfill i = i_k.
\end{cases}
\end{array}\]
The signs in the formula come from the orientations of the cells. The differential $d : \widetilde{D(n,m)_q} \to \widetilde{D(n,m)_{q-1}}$ is given by the alternating sum of the face maps: $d = \sum_{i=1}^{q-n+m-1} (-1)^{i-1} d_i$.

Given a $B_n(\mathbb{C}_m)$-representation $L$, we want to give a description of the chain complex $\widetilde{D(n,m)_*} \otimes_{\mathbb{Z}B_n(\mathbb{C}_m)} L$ and its differential. Observe that we may identify $((\lambda,I,J),b) \otimes \ell$ with $((\lambda,I,J),1) \otimes b(\ell)$, hence there is a natural isomorphism of $\mathbf{k}$-modules $\widetilde{D(n,m)_*} \otimes_{\mathbb{Z}B_n(\mathbb{C}_m)} L \cong D(n,m)_* \otimes L$. Furthermore, this identification when applied to the face maps and the differential results in the desired formula for the differential of $D(n,m)_* \otimes L$; for instance, when $1 \le i < i_1-1$,
\[\begin{split}
d_i(((\lambda,I,J),b) \otimes \ell) & = \displaystyle \sum_{\gamma_i}(-1)^{|\gamma_i|}((\rho_i,I_1,J),\eta_\mathcal{I}(\widetilde{\gamma_i}) b) \otimes \ell\\
& = \displaystyle \sum_{\gamma_i}(-1)^{|\gamma_i|}((\rho_i,I_1,J),1) \otimes \eta_\mathcal{I}(\widetilde{\gamma_i})b(\ell).
\end{split}\]
This concludes our proof of the theorem.
\end{proof}

This result provides a general framework for computing the cellular homology of $\mathrm{Conf}_n(\mathbb{C}_m)$ with local coefficients. An easy observation is that the cellular homology of $\mathrm{Conf}_n(\mathbb{C}_m) \cup \{\infty\}$ concentrates only in certain degrees.

\begin{cor}
For an arbitrary local system $\mathcal{L}$ over $\mathrm{Conf}_n(\mathbb{C}_m)$,
\[H_j (\mathrm{Conf}_{n}(\mathbb{C}_m) \cup \{ \infty \}, \{\infty\} ; \mathcal{L}) = 0\]
for all $j < n$ or $j > 2n$.
\end{cor}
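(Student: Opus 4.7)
The plan is to deduce the vanishing directly from Theorem~\ref{thm:hom_compactification}, which identifies the homology in question with the homology of the chain complex $D(n,m)_* \otimes L$. It therefore suffices to show that $D(n,m)_q = 0$ whenever $q < n$ or $q > 2n$, since then $D(n,m)_q \otimes L = 0$ in those degrees and all homology groups must vanish as well.

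To prove this range constraint on $D(n,m)_*$, I would unpack the generators in degree $q$. By definition, a basis element of $D(n,m)_q$ is indexed by a triple $(\lambda, I, J)$ where $\lambda$ is a composition of $n+m$ of length $l(\lambda) = q - n + m$, and $I = (i_1, \dots, i_m)$ records the indices of the $m$ fixed columns with $1 \le i_1 < \dots < i_m \le l(\lambda)$. The existence of the tuple $I$ forces the inequality $m \le l(\lambda)$, and since $\lambda$ is a composition of $n+m$ each of whose parts is at least $1$, we automatically have $l(\lambda) \le n+m$. Substituting $l(\lambda) = q-n+m$ into the double inequality $m \le l(\lambda) \le n+m$ yields $n \le q \le 2n$.

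Thus $D(n,m)_q = 0$ whenever $q < n$ or $q > 2n$, so the same is true for $D(n,m)_q \otimes L$ regardless of the coefficient representation $L$. By Theorem~\ref{thm:hom_compactification}, this forces $H_j(\mathrm{Conf}_n(\mathbb{C}_m) \cup \{\infty\}, \{\infty\}; \mathcal{L}) = 0$ in the same range of degrees. There is no real obstacle here — the argument is purely combinatorial and amounts to noting that the Fox--Neuwirth stratification of $\mathrm{Conf}_n(\mathbb{C}_m)$ has cells only of dimensions between $n$ and $2n$, corresponding geometrically to the lower bound $l(\lambda) \ge m$ (there must be enough columns to accommodate the $m$ fixed punctures) and the upper bound $l(\lambda) \le n+m$ (at most one point per column).
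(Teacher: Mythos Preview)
Your proof is correct and follows essentially the same approach as the paper: both arguments observe that the cells $e_{(\lambda,I,J)}$ (equivalently, the generators of $D(n,m)_q$) exist only when $m \le l(\lambda) \le n+m$, which via $\dim = n + l(\lambda) - m$ (equivalently $l(\lambda) = q - n + m$) constrains the dimension to lie between $n$ and $2n$. The paper phrases this directly in terms of cell dimensions while you route through the chain complex $D(n,m)_*$, but the content is identical.
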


\begin{proof}
Given a cell $e_{(\lambda,I,J)}$ of dimension $n+l(\lambda)-m$, the length of the partition $\lambda$ varies from $m$ to $n+m$, and hence $n\le \dim(e_{(\lambda,I,J)}) \le 2n$. It follows that there are no cells of dimension $j < n$ or $j > 2n$, so the homology vanishes at these degrees.
\end{proof}


\subsection{Algebraic analog of column configurations}

In this subsection, we will develop and study a chain complex that will capture the structure of the vertical columns in our cellular stratification of $\mathrm{Conf}_n(\mathbb{C}_m)$, in particular when specialized to the cellular chain complex of $\mathrm{Conf}_n(\mathbb{C}_m)$ with coefficients in the local system associated with the representation of $B_n(\mathbb{C}_m)$ on $V^{\otimes n} \otimes W^{\otimes m}$ discussed in Section~\ref{sec:rep_br}.

\begin{defn}
Given an associative $\mathbf{k}$-algebra $A$ and an ordered set $\mathcal{M} = (M_1, \dots, M_m)$ where each $M_k$ is an $A$-bimodule, the \textit{chain complex} $F_*(\mathcal{M},A)$ is defined at degree $q \ge m$ by
\[ F_q(\mathcal{M},A) = \displaystyle \bigoplus_{\mathcal{I}} A^{\otimes i_1-1}\otimes M_1 \otimes A^{\otimes i_2-i_1-1} \otimes M_2 \otimes \dots \otimes M_m \otimes A^{\otimes q-i_m}\]
where $\mathcal{I} = (i_1,\dots,i_m)$ for $1 \le i_1 < \dots < i_m \le q$. The face maps for $1\le i \le q-1$ are given by
\[\begin{array}{l}
d_i (a_1 \otimes \dots \otimes a_{i_k-1} \otimes \mu_k \otimes a_{i_k+1} \otimes \dots \otimes a_q) =\\
\hspace{.3in} \begin{cases}
a_1 \otimes \dots \otimes a_i a_{i+1} \otimes \dots \otimes a_q \hspace{1in} i \ne i_k-1, i_k\\
a_1 \otimes \dots \otimes a_{i_k-1} \mu_{k} \otimes \dots \otimes a_q \hfill m = i_k-1 \not\in \mathcal{I}\\
a_1 \otimes \dots \otimes \mu_k a_{i_k+1} \otimes \dots \otimes a_q \hfill m = i_k \text{ and } i_k+1 \not\in \mathcal{I}\\
0 \hfill \{i,i+1\} \subseteq \mathcal{I}
\end{cases} \end{array}\]
and differential is $d = \sum_{i=1}^{q-1} (-1)^{i-1} d_i$.
\end{defn}

The graded group structure and the differential of $F_*(\mathcal{M},A)$ are similar to that of the extended two-sided bar complex $B^e_*(A,A,A)$ of the algebra $A$ (see, e.g., \cite{gin05,etw17}). Therefore, $F_*(\mathcal{M},A)$ forms a well-defined chain complex in a similar manner as the bar complex $B^e_*(A,A,A)$. Observe that the last case of the face maps guarantees that there will always be $m$ $A$-bimodules present in every summand of $F_*(\mathcal{M},A)$.

Let $I$ be the \textit{augmentation ideal} of $A$, consisting of elements of positive degree, and let $F_*(\mathcal{M},I)$ denote the chain complex obtained by replacing all copies of $A$ in $F_*(\mathcal{M},A)$ with $I$ (one may think of $F_*(\mathcal{M},I)$ as a reduced form of the chain complex $F_*(\mathcal{M},A)$). If the bimodules $M_k$ are graded, for an element $f = a_1 \otimes \dots \otimes \mu_k \otimes \dots \otimes a_q$ with $a_i$ homogeneous elements of $A$ of degree deg$(a_i)$, we may define the degree of $f$ to be $\mathrm{deg}(f) := \sum_k \mathrm{deg}(\mu_k) + \sum_{i\ne i_k} \mathrm{deg}(a_i)$. The differential in $F_*(\mathcal{M},I)$ strictly preserves the degree of elements, hence we may define the split subcomplex generated by homogeneous elements of $F_*(\mathcal{M},I)$ of degree precisely $n$, denoted by $F_{*,n}(\mathcal{M},I)$.

Given an associative $\mathbf{k}$-algebra $A$, let $A^{op}$ denote the opposite algebra, and $A^e := A \otimes A^{op}$ be the enveloping algebra of $A$. There is a canonical isomorphism $(A^e)^{op} \cong A^e$, thus an $A$-bimodule can be regarded as a left (or equivalently, right) $A^e$-module \cite{gin05}. For the case $m = 1$, in \cite{h22} we computed the homology of the chain complex $F_*(M,I) := F_*(\mathcal{M},I)$ where $\mathcal{M}$ contains only one $A$-bimodule $M$:

\begin{prop}[\cite{h22}]
$H_*(F_{*}(M,I)) \cong \mathrm{Tor}^{A^e}_{*-1}(M,\mathbf{k})$.
\end{prop}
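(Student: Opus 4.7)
The plan is to realize the chain complex $F_*(M, I)$, up to a homological shift by one, as $M \otimes_{A^e} P_*$ for an explicit $A^e$-free resolution $P_* \to \mathbf{k}$. First I would note the decomposition
\[
F_q(M, I) \;=\; \bigoplus_{\substack{p+r=q-1\\ p,r\ge 0}} I^{\otimes p} \otimes M \otimes I^{\otimes r},
\]
and set $C_{p,r} := I^{\otimes p} \otimes M \otimes I^{\otimes r}$, viewing this sum as bigraded. Inspecting the face-map formula, each face either multiplies two adjacent $I$-factors or implements one of the bimodule actions $I \cdot M$, $M \cdot I$; the faces strictly to the left of $M$ assemble into a horizontal differential $d^L \colon C_{p,r} \to C_{p-1,r}$, and those to the right into a vertical differential $d^R \colon C_{p,r} \to C_{p,r-1}$. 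The simplicial identities force $d^L d^R + d^R d^L = 0$, so $C_{*,*}$ is a first-quadrant double complex with $F_q(M, I) = \mathrm{Tot}(C_{*,*})_{q-1}$.

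Next, let $\overline{B}^R_p := I^{\otimes p} \otimes A$ and $\overline{B}^L_r := A \otimes I^{\otimes r}$ denote the reduced one-sided bar resolutions of $\mathbf{k}$ as right and left $A$-modules. Their external tensor product
\[
\overline{B}^R_p \otimes_\mathbf{k} \overline{B}^L_r \;=\; I^{\otimes p} \otimes (A \otimes A) \otimes I^{\otimes r}
\]
is a free $A^e$-module: the central $A \otimes A$ is a copy of $A^e$ carrying the natural bimodule structure (left $A$-action on the anchor of $\overline{B}^L$, right $A$-action on the anchor of $\overline{B}^R$), while the outer $I$-factors serve as a $\mathbf{k}$-indexing set of generators. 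Since $\mathbf{k}$ is a field, both bar resolutions are $\mathbf{k}$-flat, so the K\"{u}nneth theorem yields that $P_* := \mathrm{Tot}(\overline{B}^R_* \otimes_\mathbf{k} \overline{B}^L_*) \to \mathbf{k} \otimes_\mathbf{k} \mathbf{k} = \mathbf{k}$ is an $A^e$-free resolution.

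Absorbing the central copy of $A^e$ into the bimodule structure of $M$ produces a natural isomorphism
\[
M \otimes_{A^e} \bigl(\overline{B}^R_p \otimes_\mathbf{k} \overline{B}^L_r\bigr) \;\cong\; I^{\otimes p} \otimes M \otimes I^{\otimes r} \;=\; C_{p,r},
\]
and unwinding the one-sided bar differentials recovers $d^L$ and $d^R$: the final face of $\overline{B}^R_p$ becomes the left multiplication $I \cdot M$, the initial face of $\overline{B}^L_r$ becomes $M \cdot I$, and all other faces are just multiplications between adjacent $I$-factors. We conclude
\[
H_q(F_*(M, I)) \;=\; H_{q-1}(\mathrm{Tot}(C_{*,*})) \;=\; H_{q-1}(M \otimes_{A^e} P_*) \;=\; \mathrm{Tor}^{A^e}_{q-1}(M, \mathbf{k}).
\]

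The main technical hurdle is that the $A^e$-module structure on the central $A \otimes A$ inside $\overline{B}^R_p \otimes \overline{B}^L_r$ is \emph{twisted} relative to the standard free structure (the left factor carries the right $A$-action and vice versa), so the key isomorphism above is implemented by a swap of the two central tensorands, which must be propagated carefully through the sign conventions to match the face-map signs in the definition of $F_*$. Once the bookkeeping is settled, the result is essentially the identification of $F_*(M, I)$ with a shift of the derived tensor product $M \otimes^L_{A^e} \mathbf{k}$.
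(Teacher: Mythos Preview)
Your argument is correct and is a genuinely different route from the paper's. The paper (following \cite{h22}) embeds $F_*(M,I)$ into the Hochschild chain complex $CH_*(A,M)$ via the cyclic rotation map
\[
a_1 \otimes \dots \otimes \mu_i \otimes \dots \otimes a_q \;\longmapsto\; (-1)^{q(i-1)}\, \mu_i \otimes a_{i+1} \otimes \dots \otimes a_q \otimes 1 \otimes a_1 \otimes \dots \otimes a_{i-1},
\]
identifies the image as the subcomplex $Z_*(M,I) = \bigoplus_i M \otimes I^{\otimes q-i} \otimes \mathbf{k} \otimes I^{\otimes i-1}$, and then computes the homology of that subcomplex. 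You instead stay put and recognise $F_{*+1}(M,I)$ directly as $M \otimes_{A^e} \mathrm{Tot}(\overline{B}^R_* \otimes_\mathbf{k} \overline{B}^L_*)$, using the external product of the two one-sided reduced bar resolutions as an explicit $A^e$-free resolution of $\mathbf{k}$.

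Your approach is cleaner for the single-bimodule statement: it bypasses the Hochschild detour and the cyclic reindexing entirely. The paper's route, however, is not gratuitous: the complex $Z_*(M,I)$ and the rotation map are the template that is then generalised to the multi-module Hochschild complex $CH_*(A,\mathcal{M})$ and its subcomplex $Z_*(\mathcal{M},I)$, which is how the iterated derived tensor product in Theorem~\ref{thm:hom_F} is obtained. In other words, the paper's proof of the $m=1$ case is engineered so that the same move (rotate $M_1$ to the front, insert a marker $1$) works verbatim when there are several bimodules; your bicomplex argument would need a separate (though not difficult) reformulation to handle the iterated case.
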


A key observation made in the proof of this proposition was that there is an inclusion of the chain complex $F_*(M,I)$ into the Hochschild chain complex $CH_*(A,M)$, defined degree-wise by $CH_q(A,M) = M \otimes A^{\otimes q}$, with face maps
\[d_i (\mu \otimes a_1 \otimes \dots \otimes a_q) = \begin{cases}
\mu a_1 \otimes a_2 \otimes \dots \otimes a_q \quad \hspace{1.6in}\hfill i = 0\\
\mu \otimes a_1 \otimes \dots \otimes a_i a_{i+1} \otimes \dots \otimes a_q \hfill 1 \le i \le q-1 \\
a_q \mu \otimes a_1 \otimes \dots \otimes a_{q-1} \hfill i = q\\
\end{cases}\]
and differential $d = \sum_{i=0}^{q} (-1)^i d_i$. The map $f_*: F_*(M,I) \to CH_*(A,M)$ that sends
\[a_1 \otimes \dots \otimes a_{i-1} \otimes \mu_i \otimes a_{i+1} \otimes \dots \otimes a_q \mapsto (-1)^{q(i-1)} \mu_i \otimes a_{i+1} \otimes \dots \otimes a_q \otimes 1 \otimes a_1 \otimes \dots \otimes a_{i-1}\]
is an injective map whose image is a subcomplex of $CH_*(A,M)$ which in degree $q$ has the form $\bigoplus^q_{i=1} M \otimes I^{\otimes q-i} \otimes \mathbf{k} \otimes I^{\otimes i-1}$. Denote this chain complex by $Z_*(M,I)$.

We will develop analogues of these chain complexes that involve multiple bimodules (i.e., $m\ge 2$).
\begin{defn}
Given an ordered set $\mathcal{M} = (M_1, \dots, M_m)$ of $A$-bimodules, \textit{the multi-module Hochschild chain complex} $CH_*(A,\mathcal{M})$ is defined degree-wise by
\[CH_q(A,\mathcal{M}) = \bigoplus_\mathcal{I} M_1 \otimes A^{\otimes i_2-2} \otimes M_2 \otimes A^{\otimes i_3-i_2-1} \otimes \dots \otimes M_m \otimes A^{\otimes q-i_m}\]
for $\mathcal{I} = (i_1,i_2,\dots,i_m)$ where $1 = i_1 < i_2 <\dots < i_m \le q$, with face maps
\[\begin{array}{l}
d_i (\mu_1 \otimes a_2 \otimes \dots \otimes a_{i_k-1} \otimes \mu_{i_k} \otimes a_{i_k+1} \otimes \dots \otimes a_q) =\\
\hspace{.3in} \begin{cases}
\mu_1 a_2 \otimes a_3 \otimes \dots \otimes a_{i_k-1} \otimes \mu_{i_k} \otimes a_{i_k+1} \otimes \dots \otimes a_q \quad \hspace{1in}\hfill i = 1\\
\mu_1 \otimes a_2 \otimes \dots \otimes a_i a_{i+1} \otimes \dots \otimes a_q \hfill i \ne i_k-1, i_k \\
\mu_1 \otimes a_2 \otimes \dots \otimes a_{i_k-1} \mu_{k} \otimes \dots \otimes a_q \hfill i = i_k-1 \not\in \mathcal{I}\\
\mu_1 \otimes a_2 \otimes \dots \otimes \mu_k a_{i_k+1} \otimes \dots \otimes a_q \hfill i = i_k \text{ and } i_k+1 \not\in \mathcal{I}\\
a_q \mu_1 \otimes a_2 \otimes \dots \otimes a_{i_k-1} \otimes \mu_{i_k} \otimes a_{i_k+1} \otimes \dots \otimes a_{q-1} \hfill i = q \not\in \mathcal{I}\\
0 \hfill \{i,i+1\} \subseteq \mathcal{I}
\end{cases} \end{array}\]
and differential $d = \sum_{i=1}^{q} (-1)^{i-1} d_i$.
\end{defn}

Similar to the case $m=1$, we may define an chain map $f_*: F_*(\mathcal{M},I) \to CH_*(A,\mathcal{M})$ that sends $a_1 \otimes \dots \otimes a_{i_k-1} \otimes \mu_k \otimes a_{i_k+1} \otimes \dots \otimes a_q$ to
\[(-1)^{q(i_1-1)} \mu_1 \otimes a_{i_1+1} \otimes \dots \otimes a_{i_k-1} \otimes \mu_k \otimes a_{i_k+1} \otimes \dots \otimes a_q \otimes 1 \otimes a_1 \otimes \dots \otimes a_{i_1-1}.\]
Note that for all $i \ne i_k$, $\mathrm{deg}(a_i) > 0$ since each $a_i$ is in the augmentation ideal $I$ of $A$. It follows that there is precisely one occurrence of the unit in the image of an arbitrary element of $F_*(\mathcal{M},I)$ under this map, and its position is determined by the indices $q$ and $i_1$. We deduce that this map is injective. The isomorphic image of $F_*(\mathcal{M},I)$ via this map forms a subcomplex $Z_*(\mathcal{M},I)$ of the multi-module Hochschild chain complex which in degree $q$ has the form
\[Z_q(\mathcal{M},I) = \bigoplus_{\mathcal{I}} M_1 \otimes I^{\otimes i_2-i_1-1} \otimes M_2 \otimes \dots \otimes M_m \otimes I^{\otimes q-i_m} \otimes \mathbf{k} \otimes I^{\otimes i_1-1}.\]
It suffices to compute the homology of this chain complex, which will be our focus for the rest of this subsection.

\textit{The case $m = 2$.} The key observation is that the chain complex $Z_*(\mathcal{M},I)$ is isomorphic to the totalization of an $m$-simplicial object. We give the treatment for the case $m = 2$ as an example.

\begin{figure}[t]
\begin{centering}
\[ \begin{tikzcd}
    {} 
        & \dots \arrow{d}{d^v} 
        & \dots \arrow{d}{d^v} 
    & {} \\
    \dots \arrow{r}{d^h}
        & \begin{tabular}{c}
            $(M_1 \otimes I \otimes M_2) \otimes I^{\otimes 2} \otimes \mathbf{k}$ \\[2pt]
            $\oplus (M_1 \otimes I \otimes M_2) \otimes I \otimes \mathbf{k} \otimes I$ \\[2pt]
            $\oplus (M_1 \otimes I \otimes M_2) \otimes \mathbf{k} \otimes I^{\otimes 2}$
        \end{tabular} \arrow{r}{d^h}\arrow{d}{d^v}    
        & \begin{tabular}{c}
        
            $(M_1 \otimes M_2) \otimes I^{\otimes 2} \otimes \mathbf{k}$ \\[2pt]
            $\oplus (M_1 \otimes M_2) \otimes I \otimes \mathbf{k} \otimes I$ \\[2pt]
            $\oplus (M_1 \otimes M_2) \otimes \mathbf{k} \otimes I^{\otimes 2}$
        \end{tabular} \arrow{r}\arrow{d}{d^v} 
    & 0 \\
    \dots \arrow{r}{d^h}
        & \begin{tabular}{c}
            $(M_1 \otimes I \otimes M_2) \otimes I \otimes \mathbf{k}$ \\[2pt]
            $\oplus (M_1 \otimes I \otimes M_2) \otimes \mathbf{k} \otimes I$
        \end{tabular} \arrow{r}{d^h}\arrow{d}{d^v}
        & \begin{tabular}{c}
            $(M_1 \otimes M_2) \otimes I \otimes \mathbf{k}$ \\[2pt]
            $\oplus (M_1 \otimes M_2) \otimes \mathbf{k} \otimes I$
        \end{tabular}
        \arrow{r}\arrow{d}{d^v}
    & 0 \\
    \dots \arrow{r}{d^h}
        & (M_1 \otimes I \otimes M_2) \otimes \mathbf{k} \arrow{r}{d^h} \arrow{d}
        & (M_1 \otimes M_2) \otimes \mathbf{k} \arrow{r} \arrow{d}
    & 0\\
    {}
        & 0
        & 0
    & {}
\end{tikzcd}\]
    \captionsetup{justification=centering}
    \caption{The double complex $C_{p,q}$.}
\end{centering}
\end{figure}

Construct a double complex $C_{*,*}$ by setting $C_{p,q} := Z_q(B_{p-1}(M_1,I,M_2),I) = \bigoplus_{i=1}^q (M_1 \otimes I^{\otimes p-1}\otimes M_2) \otimes I^{\otimes q-i} \otimes \mathbf{k} \otimes I^{\otimes i-1}$ (see Figure 5). The vertical face maps are given by
\[\begin{array}{l}
d^v_j ((\mu_0 \otimes a_1 \otimes \dots \otimes a_{p-1} \otimes \mu_p) \otimes b_1 \otimes \dots \otimes b_q) =\\
\hspace{.3in} \begin{cases}
(\mu_0 \otimes a_1 \otimes \dots \otimes a_{p-1} \otimes \mu_p b_1) \otimes b_2 \otimes \dots \otimes b_q \quad \hspace{1.4in}\hfill j = 0\\
(\mu_0 \otimes a_1 \otimes \dots \otimes a_{p-1} \otimes \mu_p) \otimes b_1 \otimes \dots \otimes b_j b_{j+1} \otimes \dots \otimes b_q \hfill 1 \le j \le q-1 \\
(b_q \mu_0 \otimes a_1 \otimes \dots \otimes a_{p-1} \otimes \mu_p) \otimes b_1 \otimes \dots \otimes b_{q-1} \hfill j = q
\end{cases} \end{array}\]
and the vertical differential is $d^v = \sum_{j=0}^q d^v_j$. Similarly, the horizontal face maps are given by
\[\begin{array}{l}
d^h_j ((\mu_0 \otimes a_1 \otimes \dots \otimes a_{p-1} \otimes \mu_p) \otimes b_1 \otimes \dots \otimes b_q) =\\
\hspace{.3in} \begin{cases}
(\mu_0 a_1 \otimes a_2 \otimes \dots \otimes a_{p-1} \otimes \mu_p) \otimes b_1 \otimes \dots \otimes b_q \quad \hspace{1.4in}\hfill j = 0\\
(\mu_0 \otimes a_1 \otimes \dots \otimes a_j a_{j+1} \otimes \dots \otimes a_{p-1} \otimes \mu_p) \otimes b_1 \otimes \dots \otimes b_q \hfill 1 \le j \le p-2 \\
(\mu_0 \otimes a_1 \otimes \dots \otimes a_{p-2} \otimes a_{p-1} \mu_p) \otimes b_1 \otimes \dots \otimes b_q \hfill j = p-1
\end{cases} \end{array}\]
and the horizontal differential is $d^h = \sum_{j=0}^{p-1} d^h_j$.

Observe that the associated total complex $\mathrm{Tot}(C)_*$ of $C_{*,*}$ at degree $n$ has the form \[\begin{array}{r l}
     \mathrm{Tot}(C)_n & = \displaystyle\bigoplus_{p+q=n} C_{p,q} = \bigoplus_{p+q=n} (M_1 \otimes I^{\otimes p-1}\otimes M_2) \otimes I^{\otimes q-i} \otimes \mathbf{k} \otimes I^{\otimes i-1}\\[15pt]
     & = Z_n((M_1,M_2),I).
\end{array}\]
For the differential at degree $n$, we have 
\[\begin{array}{r l}
     d & = d^h + (-1)^p d^v = d_0^h - d_1^h + \dots + (-1)^{p-1} d^h_{p-1} + (-1)^p (d^v_0 - d^v_1 + \dots + (-1)^q d^v_q)\\[5pt]
     & = d^Z_1 - d^Z_2 + \dots + (-1)^{p-1}d^Z_p + (-1)^p d^Z_{p+1} + \dots + (-1)^{n}d^Z_{n+1} = d^Z
\end{array}\]
thus indeed $Z_*((M_1,M_2),I)$ is isomorphic to the total complex of the double complex $C_{*,*}$. Notice that the horizontal filtering of $C_{*,*}$ is by the reduced bar complex $B_{*-1}(M_1, I, M_2)$, whereas the vertical filtering is by the complex $Z_*(M,I)$. Recall that the complex $Z_*(M,I)$ is isomorphic to $F_*(M,I)$, whose homology is given by $H_*(Z_*(M,I)) \cong H_*(F_*(M,I)) \cong \left( M \underset{A^e}{\overset{L}{\otimes}} \mathbf{k} \right)[-1]$ (shifted in homological degree by $-1$). It follows that

\begin{prop}
$H_*(Z_*((M_1,M_2),I)) \cong \left( \left(M_1 \underset{A}{\overset{L}{\otimes}} M_2 \right) \underset{A^e}{\overset{L}{\otimes}} \mathbf{k} \right) [-2]$.
\end{prop}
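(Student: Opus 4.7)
The plan is to extract the claim from the spectral sequence of the double complex $C_{*,*}$ just introduced, using the $m=1$ case as the essential input. From the excerpt, we already have the identification $Z_*((M_1,M_2),I) \cong \mathrm{Tot}(C_{*,*})$, so the task reduces to computing the homology of this total complex.

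First, I would filter $C_{*,*}$ by columns and compute the associated spectral sequence by taking the vertical differential first. For each fixed $p$, the column $C_{p,*}$ is literally $Z_*(N_p, I)$ with $N_p := B_{p-1}(M_1,I,M_2) = M_1 \otimes I^{\otimes p-1}\otimes M_2$ regarded as an $A$-bimodule. The previous proposition (the $m=1$ case) then gives
\[ E^1_{p,q} = H_q(C_{p,*}) \cong \mathrm{Tor}^{A^e}_{q-1}(N_p, \mathbf{k}), \]
or equivalently $C_{p,*} \simeq \bigl(N_p \otimes^L_{A^e} \mathbf{k}\bigr)[-1]$ in the derived category of $\mathbf{k}$-complexes.

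The induced $d^1$ on $E^1$ is the reduced bar differential, so the $E^1$ page is precisely the reduced bar complex $B_{*-1}(M_1, I, M_2)$ with each term replaced by its $\otimes^L_{A^e} \mathbf{k}$. Since $B_*(M_1, A, M_2)$ is the standard semifree resolution of $M_1 \otimes_A M_2$ in the derived category of $A$-bimodules, the totalization of this doubly-resolved object computes the iterated derived tensor product $(M_1 \otimes^L_A M_2) \otimes^L_{A^e} \mathbf{k}$. Accounting for the two independent $-1$ shifts (one from the reindexing $B_{p-1}$ in horizontal direction, one from the $Z_*$ shift in the vertical direction) gives the total degree shift of $-2$ appearing in the statement.

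The main obstacle is justifying that this iterated procedure computes the correct double-derived functor — i.e., that the bar complex is flat enough over $A^e$ so that applying $(-)\otimes^L_{A^e}\mathbf{k}$ levelwise and totalizing agrees with the composite derived functor. This is standard homological algebra (a Cartan--Eilenberg resolution / Grothendieck-type argument), and can be circumvented by choosing projective bimodule resolutions of $M_1$ and $M_2$ before forming the bar complex, or by comparing against the second filtration (rows first) to confirm the same $E^\infty$ target. Granting this, the only real content of the proof is the invocation of the $m=1$ case column-by-column and the observation that the horizontal structure is precisely the bar resolution of $M_1 \otimes^L_A M_2$.
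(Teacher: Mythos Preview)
Your proposal is correct and follows essentially the same approach as the paper: both identify $Z_*((M_1,M_2),I)$ with $\mathrm{Tot}(C_{*,*})$, observe that the columns are copies of $Z_*(N_p,I)$ (handled by the $m=1$ case) while the horizontal direction is the reduced bar complex computing $M_1\otimes^L_A M_2$, and combine the two $[-1]$ shifts. You are somewhat more explicit than the paper about the spectral sequence mechanics and the flatness/Cartan--Eilenberg justification, but the argument is the same.
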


\textit{General case.} We can generalize the construction above for any $m \ge 3$. Let $\mathcal{M} = (M_1, \dots, M_m)$. Instead of a bisimplicial object $C_{*,*}$, we construct an $m$-simplicial object $C_{*,\dots,*}$ which for each $m$-tuple $(p_1,\dots,p_m)$ is given by 
\[ \begin{array}{l}
     C_{p_1,\dots,p_m} := Z_{p_m}(B_{p_{m-1}-1}((\dots B_{p_2-1}(B_{p_1-1}(M_1,I,M_2),I,M_3)\dots),I,M_m),I) \\[5pt]
     = \displaystyle \bigoplus_{i=1}^{p_m} \null ((\dots((M_1 \otimes I^{\otimes p_1-1}\otimes M_2) \otimes I^{\otimes p_2-1} \otimes M_3) \otimes \dots) \otimes I^{\otimes p_m-1} \otimes M_m) \\[5pt]
     \hspace{0.5in} \null \otimes I^{\otimes p_m-i} \otimes \mathbf{k} \otimes I^{\otimes i-1}.
\end{array} \]
The face maps in the $i^\mathrm{th}$ direction are defined similarly to the horizontal mappings in the previous case if $1 \le i \le m-1$ and to the vertical mappings if $i = m$. From this specification, we have an analogous observation about the filtering of the $m$-complex constructed from $C_{*,\dots,*}$: the filtering in the $i^{\mathrm{th}}$ direction for $1 \le i \le m-1$ is by a reduced bar complex (with degree shift $-1$), whereas that in the $m^\mathrm{th}$ direction is by the complex $Z_*(M,I)$ for some $M$. The totalization of $C_{*,\dots,*}$ is given degree-wise by $\mathrm{Tot}(C)_n = \bigoplus_{\sum p_i=n} C_{p_1,\dots,p_m} = Z_n(\mathcal{M}, I)$, which implies

\begin{thm}\label{thm:hom_F}
For $\mathcal{M} = (M_1, \dots, M_m)$,
\[H_*(F_*(\mathcal{M},I)) \cong \left\{ \left( \left( \dots\left( \left(M_1 \underset{A}{\overset{L}{\otimes}} M_2 \right) \underset{A}{\overset{L}{\otimes}} M_3 \right) \dots \right) \underset{A}{\overset{L}{\otimes}} M_m \right) \underset{A^e}{\overset{L}{\otimes}} \mathbf{k} \right\} [-m]. \]
\end{thm}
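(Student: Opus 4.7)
The plan is to combine the already-established identification $F_\ast(\mathcal{M}, I) \cong Z_\ast(\mathcal{M}, I)$ (via the injection $f_\ast$) with an induction on $m$ that generalizes the double-complex argument carried out for the case $m = 2$. This immediately reduces the theorem to computing $H_\ast(Z_\ast(\mathcal{M}, I))$. The base case $m = 1$ is the cited proposition from \cite{h22}, in which the required shift $[-1]$ is already present.

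For the inductive step, I would construct the $m$-fold simplicial object $C_{p_1, \dots, p_m}$ sketched at the end of the excerpt, and verify, by a sign calculation generalizing the $m = 2$ case, that its totalization is exactly $Z_\ast(\mathcal{M}, I)$ as a chain complex. The face maps in the $i$-th direction for $1 \le i \le m-1$ are the reduced bar-complex differentials on the block $M_i \otimes I^{\otimes p_i - 1} \otimes M_{i+1}$, while those in the $m$-th direction are the $Z$-complex differentials that wrap $M_m$ around through the unit factor and back to $M_1$.

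Equivalently, and perhaps more usefully for the induction, I would decompose $Z_\ast(\mathcal{M}, I)$ as the totalization of a double complex whose horizontal axis records the size of the $A$-gap between $M_1$ and $M_2$ and whose vertical axis records everything else. The horizontal direction is a reduced bar complex $\overline{B}_{\bullet - 1}(M_1, I, M_2)$, whose homology computes $(M_1 \otimes^L_A M_2)[-1]$. After applying this horizontal homology, the vertical structure becomes the $Z$-complex associated to the $(m-1)$-tuple $(M_1 \otimes^L_A M_2, M_3, \dots, M_m)$, to which the inductive hypothesis applies, delivering the iterated derived tensor with shift $[-(m-1)]$. Combined with the horizontal shift $[-1]$, this gives the desired total shift $[-m]$.

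The main obstacle is ensuring that the spectral sequence of this double complex collapses at the expected page and that the partial homology can legitimately be substituted back into a $Z$-complex of one lower arity. This reduces to checking that the reduced bar resolution $\overline{B}_\ast(M_1, I, M_2)$ is levelwise flat as an $A$-bimodule (so that tensoring with the remaining $M_i$ and copies of $A$ preserves quasi-isomorphisms and yields the correct hyper-Tor), together with a careful degree-bookkeeping argument to track the accumulation of $[-1]$ shifts through the induction.
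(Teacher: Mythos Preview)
Your proposal is correct and follows essentially the same approach as the paper: both identify $F_*(\mathcal{M},I)\cong Z_*(\mathcal{M},I)$, realize the latter as the totalization of the $m$-simplicial object $C_{p_1,\dots,p_m}$, and read off the iterated derived tensor product from the filtrations (reduced bar complexes in directions $1,\dots,m-1$, the $Z$-complex in direction $m$), with the $[-1]$ shifts accumulating to $[-m]$. Your inductive repackaging and the flatness/collapse concern you flag amount to a more careful organization of the same argument, which the paper leaves as a terse assertion about the filtration structure of the $m$-complex.
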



\subsection{Homology with mixed braid coefficients}

Let $(V,W,\tau,\varphi)$ be a left-separable mixed-braided vector space, and let $\mathfrak{A} := \mathfrak{A}(V)$ be the quantum shuffle algebra associated with the braided vector space $V$. Suppose $(V,W,\tau,\varphi)$ satisfies Equation~\ref{prop:comm_diag_cond}.2: $(\mathrm{id} \otimes \tau) \circ (\varphi \otimes \mathrm{id}) = (\varphi \otimes \mathrm{id}) \circ (\mathrm{id} \otimes \sigma_W) \circ (\tau \otimes \mathrm{id}) \circ (\mathrm{id} \otimes \sigma_W^{-1})$. In \cite{h22}, we defined an $\mathfrak{A}$-bimodule from the separable left-braided vector space $(V,W,\sigma_V,\tau,\varphi)$ as follows:

\begin{defn}\label{defn:M}
The \textit{graded} $\mathfrak{A}(V)$\textit{-bimodule} $\mathfrak{M}$ is defined by
\[ \mathfrak{M} = \displaystyle \bigoplus_{q \ge 1} \bigoplus_{0 \le j \le q-1} V^{\otimes j} \otimes W \otimes V^{\otimes q-j-1}.\]
Multiplication on both sides is given by the quantum shuffle product in the following sense: for the left multiplication we have
\[[a_1 | \dots | a_p] \star [b_1 | \dots | b_j |w|b_{j+2}|\dots| b_q] = \sum_{\gamma} \widetilde{\gamma} [a_1 | \dots | a_p | b_1 | \dots | b_j | w | b_{j+2} | \dots | b_q]\]
and for the right multiplication
\[\displaystyle [a_1 | \dots |a_j|w|a_{j+2}|\dots | a_p] \star [b_1 | \dots | b_q] = \sum_{\gamma}  \widetilde{\gamma} [a_1 | \dots|a_j|w|a_{j+2}|\dots | a_p | b_1 | \dots | b_q]\]
where $\gamma$ draws from all $(p,q)$-shuffles, and $\widetilde{\gamma}$ is the lift of $\gamma$ to $B_{p+q}$. The action of the braid $\widetilde{\gamma}$ on an element $\ell \in V^{\otimes j} \otimes W \otimes V^{\otimes q-j-1}$ is given by
\[ \sigma_m(\ell) = \begin{cases}
(\mathrm{id}^{\otimes m-1} \otimes \sigma_V \otimes \mathrm{id}^{\otimes n-m})(\ell) \hspace{.3in} \hfill m \ne i-1, i \\
(\mathrm{id}^{\otimes i-2} \otimes \varphi \otimes \mathrm{id}^{\otimes n-i+1})(\ell) \hfill m = i-1 \\
(\mathrm{id}^{\otimes i-1} \otimes \tau\varphi^{-1} \otimes \mathrm{id}^{\otimes n-i})(\ell) \hfill m = i.
\end{cases}
\]
\end{defn}

Write $\epsilon$ for the braided $\mathbf{k}$-module $\epsilon = \mathbf{k}$ with braiding on $\epsilon^{\otimes 2} \cong \mathbf{k}$ given by multiplication by $-1$. For a general braided $\mathbf{k}$-module $(V,\sigma)$, write $V_\epsilon = V \otimes \epsilon$ with braiding twisted by the sign on $\epsilon$, i.e., $\sigma_{V_\epsilon} = -\sigma_V$. Observe that if $(V,W,\tau,\varphi)$ is a left-separable mixed-braided vector space satisfying Equation~\ref{prop:comm_diag_cond}.2, then $(V_\epsilon, W_\epsilon, \tau, \varphi_\epsilon)$ also forms a left-separable mixed-braided vector space with $\varphi_\epsilon := -\varphi$, which satisfies an equivalent equation:
\[(\mathrm{id} \otimes \tau) \circ (\varphi_{\epsilon} \otimes \mathrm{id}) = (\varphi_\epsilon \otimes \mathrm{id}) \circ (\mathrm{id} \otimes \sigma_{W_\epsilon}) \circ (\tau \otimes \mathrm{id}) \circ (\mathrm{id} \otimes \sigma_{W_\epsilon}^{-1}).\]
Let $\mathfrak{A} := \mathfrak{A}(V_\epsilon)$ be the quantum shuffle algebra associated with $V_\epsilon$, and $\mathfrak{M}$ be the $\mathfrak{A}$-bimodule defined from $(V_\epsilon, W_\epsilon,\tau, \varphi_\epsilon)$ as described above. Let $\mathfrak{I}$ be the augmentation ideal of $\mathfrak{A}$, consisting of elements of positive degree. The following proposition shows the relationship between the algebraic structures we developed in the previous subsection and the cellular chain complex of configuration spaces with twisted coefficients.

\begin{prop}\label{prop:F=D}
Let $\mathcal{M} = (\mathfrak{M}_1, \dots, \mathfrak{M}_m)$ where each $\mathfrak{M}_k = \mathfrak{M}$. Then there is an isomorphism of chain complexes
\[F_{*,n+m}(\mathcal{M},\mathfrak{I}) \cong D(n,m)_{*+n-m} \otimes (V^{\otimes n}\otimes W^{\otimes m}).\]
\end{prop}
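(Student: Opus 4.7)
The plan is to first exhibit a degree-by-degree $\mathbf{k}$-module isomorphism indexed by the Fox–Neuwirth triples $(\lambda,I,J)$, then verify that the differentials on the two sides agree case by case, relying on the separability and compatibility conditions of Section~\ref{sec:rep_br} to transport the braid action through the identification. The argument generalizes the $m=1$ case handled in \cite{h22}, so I will follow its template and focus on what is new for $m \ge 2$.

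For the underlying graded vector spaces, at homological degree $q$ the summand of $F_{q,n+m}(\mathcal{M},\mathfrak{I})$ indexed by a tuple $\mathcal{I}=(i_1,\dots,i_m)$ with $1\le i_1<\dots<i_m\le q$ decomposes further by internal degrees on each tensor factor. Fixing the internal degree $\lambda_{i_k}$ of the bimodule $\mathfrak{M}_k$ and the position $j_k$ of the $W$-entry inside it, and the internal degree $\lambda_i$ of each $\mathfrak{I}$-slot, produces exactly the data $(\lambda,I,J)$ where $l(\lambda)=q$ and the overall position $\iota_k=j_k+1+\sum_{i<i_k}\lambda_i$ records where $W$ sits in the resulting tensor string. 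Collapsing the parentheses gives a copy of $V^{\otimes \iota_1-1}\otimes W\otimes \cdots\otimes W\otimes V^{\otimes n+m-\iota_m}$, which by Proposition~\ref{prop:ind_mix} is identified with $\widetilde{\alpha_{\mathcal{I}_\iota}}(V^{\otimes n}\otimes W^{\otimes m})$ via $\xi_{\mathcal{I}_\iota,n+m}^{-1}$ where $\mathcal{I}_\iota=(\iota_1,\dots,\iota_m)$. Summing over all $(\lambda,I,J)$ with $l(\lambda)=q$ and applying these isomorphisms produces the desired identification with $D(n,m)_{q+n-m}\otimes(V^{\otimes n}\otimes W^{\otimes m})$.

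Next I would compare differentials face map by face map, matching the five boundary types of Proposition~2.3 with the five clauses defining $d_i$ on $F_*$. A face map $d_i$ with $i\not\in\{i_k-1,i_k\}$ applies the quantum shuffle product in $\mathfrak{A}=\mathfrak{A}(V_\epsilon)$ to adjacent $\mathfrak{I}$-slots, which by the very definition of the product is a signed sum over $(\lambda_i,\lambda_{i+1})$-shuffles with braided lifts; this matches types (1)–(3) of the Fox–Neuwirth differential, where the coefficients $c_{\lambda_i,\lambda_{i+1}}$ arise as precisely $\sum_\gamma(-1)^{|\gamma|}$ after accounting for the sign twist $V\mapsto V_\epsilon$. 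For $i=i_k-1$ (respectively $i=i_k$), $d_i$ performs the right (resp.\ left) $\mathfrak{A}$-action on $\mathfrak{M}$; Definition~\ref{defn:M} spells out this action as a sum of braided shuffles governed by $\varphi_\epsilon$ and $\tau\varphi_\epsilon^{-1}$, which is exactly the sum over $((\lambda_{i_k-1},h),\lambda_{i_k},j_k)$- or $(\lambda_{i_k},(\lambda_{i_k+1},h),j_k)$-shuffles appearing in types (4)–(5). Finally, the forbidden case $\{i,i+1\}\subset\mathcal{I}$ matches the zero face map, which on the topological side reflects that two fixed columns cannot merge. The alternating sign $(-1)^{i-1}$ in $d=\sum(-1)^{i-1}d_i$ lines up with the positional signs $(-1)^{i-1}$, $(-1)^{i_k-2}$, $(-1)^{i_k-1}$ on the Fox–Neuwirth side.

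The key technical input, and the main obstacle, is verifying that after applying the identifications $\xi_{\mathcal{I}_\iota,n+m}$ the bimodule actions in $\mathfrak{M}$ actually correspond to the action of $\eta_{\mathcal{I}_\iota}(\widetilde{\gamma})\in B_n(\mathbb{C}_m)$ on $V^{\otimes n}\otimes W^{\otimes m}$ dictated by the twisted differential of Section~\ref{subsec:fnf_twisted}. This is precisely the content of the commutative diagram \eqref{eq:comm_diag}: left-separability of $(V,W,\tau,\varphi)$ together with Equation~\ref{prop:comm_diag_cond}.2 ensures that the local braided structure on each $\mathfrak{M}_k$ can be inserted into the middle of the tensor string without disturbing the surrounding $V$-factors or the other $W$-copies, which is exactly what must happen for the two descriptions of the boundary to coincide. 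Once this compatibility is invoked, the $m$-fold case reduces, stratum-by-stratum, to local computations about a single $\mathfrak{M}_k$ and its neighbors — each of which is essentially Proposition~5.4 of \cite{h22}. Assembling these local matches over all triples $(\lambda,I,J)$ and all $i$ yields the claimed isomorphism of chain complexes.
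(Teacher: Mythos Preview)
Your proposal is correct and follows essentially the same approach as the paper's proof: establish the graded $\mathbf{k}$-module isomorphism by indexing summands of $F_{q,n+m}(\mathcal{M},\mathfrak{I})$ by Fox--Neuwirth triples $(\lambda,I,J)$ via the identification $\varphi_{\mathcal{I},n+m}$ (equivalently your $\xi_{\mathcal{I}_\iota}$), then match the differentials by recognizing each face map as a quantum shuffle product or bimodule action and invoking the commutativity of Diagram~\eqref{eq:comm_diag} (left-separability plus Equation~\ref{prop:comm_diag_cond}.2) to transport the local braid action to the global $B_n(\mathbb{C}_m)$-action. One small slip: for $i=i_k-1$ the face map is the \emph{left} $\mathfrak{A}$-action $a_{i_k-1}\mu_k$ (matching type~(4)), and for $i=i_k$ it is the \emph{right} action $\mu_k a_{i_k+1}$ (matching type~(5)); you have these reversed, but this does not affect the argument.
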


\begin{proof}
Observe that $F_{q,n+m}(\mathfrak{M},\mathfrak{I})$ consists of all spaces of the form
\[ V^{\otimes \lambda_1} \otimes \dots \otimes V^{\otimes \lambda_{i_k-1}} \otimes (V^{\otimes j_k} \otimes W_k \otimes V^{\otimes \lambda_{i_k}-j_k- 1}) \otimes V^{\otimes \lambda_{i_k+1}} \otimes \dots \otimes V^{\lambda_{q}} \]
where $\sum \lambda_i = n+m$. This is an ordered partition $\lambda$ of $n+m$ with $q$ parts labelled by an element of $V^{\iota_1-1} \otimes W_1 \otimes V^{\otimes \iota_2 - \iota_1 -1} \otimes W_2 \otimes \dots \otimes W_m \otimes V^{\otimes n+m-\iota_m}$, where $\iota_k = j_k+1+\sum_{i=1}^{i_k-1} \lambda_i$ is the position of the factor $W_k$ in the tensor product; furthermore, there is at most one number $\iota_k$ in every part of the partition $\lambda$. These data about the partition $\lambda$ are in one-to-one correspondence with an element of $D(n,m)_{q+n-m}$, whereas the labelling element is identified via the isomorphism $\varphi_{\mathcal{I},n+m} : V^{\otimes n} \otimes W^{\otimes m} \to V^{\iota_1-1} \otimes W_1 \otimes V^{\otimes \iota_2 - \iota_1 -1} \otimes W_2 \otimes \dots \otimes W_m \otimes V^{\otimes n+m-\iota_m}$ where $\mathcal{I} = (\iota_1, \dots, \iota_m)$. Hence there is an isomorphism of $\mathbf{k}$-modules between $F_{q,n+m}(\mathcal{M},\mathfrak{I})$ and $D(n,m)_{q+n-m} \otimes (V^{\otimes n}\otimes W^{\otimes m})$.

There are two main pieces of data in the boundary of an element $\lambda \otimes \ell$ in the chain complex $D(n,m)_* \otimes (V^{\otimes n}\otimes W^{\otimes m})$: the coarsening $\rho^i$ of $\lambda$ and the signed sum over all $(\lambda_i,\lambda_{i+1})$-shuffles of the actions of their lifts on $\ell$. Both are encapsulated in the differential of $F_{*,n+m}(\mathcal{M},\mathfrak{I})$: the coarsening $\rho^i$ is encoded in the choice of two multiplied elements, and the sum of the braid actions is contained in the quantum shuffle product of $\mathfrak{A}$ or the multiplication of $\mathfrak{M}_k$ by $\mathfrak{A}$. Observe that in the differential of $F_{*,n+m} (\mathcal{M},\mathfrak{I})$, the braids act only on a tensor subfactor $V^{\otimes j_k} \otimes W_k \otimes V^{\otimes \lambda_{i_k}-j_k-1}$ of $V^{\iota_1-1} \otimes W_1 \otimes V^{\otimes \iota_2 - \iota_1 -1} \otimes W_2 \otimes \dots \otimes W_m \otimes V^{\otimes n+m-\iota_m}$, whereas the corresponding braids act on the isomorphic image $V^{\otimes n} \otimes W^{\otimes m}$ of this full factor in the differential of $D(n,m)_{*+n-m} \otimes (V^{\otimes n}\otimes W^{\otimes m})$. These braid actions match precisely due to the commutativity of Diagram~\ref{eq:comm_diag}, which is equivalent to left-separability of the mixed-braided vector space $(V_\epsilon,W_\epsilon,\tau,\varphi_\epsilon)$ together with Equation~\ref{prop:comm_diag_cond}.2 as shown in Proposition~\ref{prop:comm_diag_cond}. The signs coming from $\epsilon$ encode the boundary orientations on cells in the Fox-Neuwirth/Fuks model. Via these identifications, the differentials of $F_{*,n+m}(\mathcal{M},\mathfrak{I})$ and $D(n,m)_{*+n-m} \otimes (V^{\otimes n}\otimes W^{\otimes m})$ are precisely the same formula, which shows their isomorphism as chain complexes.
\end{proof}

Combining this with Theorem~\ref{thm:hom_compactification} and Theorem~\ref{thm:hom_F}, we get a formula for the cellular homology of the $1$-point compactification of $\mathrm{Conf}_n(\mathbb{C}_m)$:

\begin{cor}\label{cor:tor_n_times}
Let $\mathcal{L}$ be the local system over $\mathrm{Conf}_n(\mathbb{C}_m)$ associated with the $B_n(\mathbb{C}_m)$-representation on $V^{\otimes n} \otimes W^{\otimes m}$. Then there is an isomorphism
\[ H_*(\mathrm{Conf}_n(\mathbb{C}_m) \cup \{ \infty \}, \{ \infty \}; \mathcal{L}) \cong \left\{ \left( \left( \dots\left( \left(\mathfrak{M}_1 \underset{\mathfrak{A}}{\overset{L}{\otimes}} \mathfrak{M}_2 \right) \underset{\mathfrak{A}}{\overset{L}{\otimes}} \mathfrak{M}_3 \right) \dots \right) \underset{\mathfrak{A}}{\overset{L}{\otimes}} \mathfrak{M}_m \right) \underset{\mathfrak{A}^e}{\overset{L}{\otimes}} \mathbf{k} \right\}[-n][n+m] \]
where the first bracket denotes the shift in homological degree and the second the internal degree part.
\end{cor}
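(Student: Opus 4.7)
The plan is to concatenate the three principal results already established in this section. First, I would apply Theorem~\ref{thm:hom_compactification} with the $B_n(\mathbb{C}_m)$-representation $L = V^{\otimes n} \otimes W^{\otimes m}$, reducing the question to computing the homology of the cellular chain complex $D(n,m)_* \otimes L$. Then I would invoke Proposition~\ref{prop:F=D} to identify this complex (up to the homological degree shift by $n-m$ recorded there) with the subcomplex $F_{*,n+m}(\mathcal{M},\mathfrak{I})$, where $\mathcal{M} = (\mathfrak{M}_1, \dots, \mathfrak{M}_m)$ with each $\mathfrak{M}_k = \mathfrak{M}$. Concretely this gives $H_r(D(n,m)_* \otimes L) \cong H_{r-n+m}(F_{*,n+m}(\mathcal{M},\mathfrak{I}))$ for every $r$.

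The key observation at the next step is that $\mathfrak{A}$ and each $\mathfrak{M}_k$ are naturally graded, and both the shuffle multiplications and the differential of $F_*(\mathcal{M},\mathfrak{I})$ are homogeneous of internal degree zero; hence $F_*(\mathcal{M},\mathfrak{I})$ splits as a direct sum of subcomplexes indexed by internal degree, and $F_{*,n+m}(\mathcal{M},\mathfrak{I})$ is exactly the internal-degree-$(n+m)$ summand. The bar resolutions used to compute the derived tensor products in Theorem~\ref{thm:hom_F} likewise respect this internal grading, so the isomorphism of that theorem restricts to each internal-degree summand. Applied to the internal degree $n+m$ part, this yields
\[
H_p(F_{*,n+m}(\mathcal{M},\mathfrak{I})) \cong H_{p-m}(T)_{n+m}, \qquad T := \left(\left(\cdots\left(\mathfrak{M}_1 \underset{\mathfrak{A}}{\overset{L}{\otimes}} \mathfrak{M}_2\right) \cdots \underset{\mathfrak{A}}{\overset{L}{\otimes}} \mathfrak{M}_m\right)\right) \underset{\mathfrak{A}^e}{\overset{L}{\otimes}} \mathbf{k},
\]
where $(-)_{n+m}$ denotes the internal-degree-$(n+m)$ component.

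Chaining the two identifications, the homology in degree $r$ of the left-hand side of the corollary is isomorphic to $H_{r-n+m}(F_{*,n+m}(\mathcal{M},\mathfrak{I})) \cong H_{r-n}(T)_{n+m}$. Writing this as $T[-n][n+m]$ with the first bracket the homological shift and the second the internal-degree truncation recovers the claimed formula.

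Because the corollary is, in essence, a concatenation of statements already proved, there is no serious obstacle. The only point requiring care is the bookkeeping of the two composed degree shifts and the verification that the internal grading is preserved by the derived tensor products appearing in Theorem~\ref{thm:hom_F}; both are straightforward consequences of the fact that the bar resolutions of $\mathfrak{A}$ and the $\mathfrak{M}_k$ are naturally graded and that the shuffle multiplications are internal-degree-preserving.
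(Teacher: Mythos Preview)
Your proposal is correct and follows exactly the approach the paper takes: the corollary is stated immediately after Proposition~\ref{prop:F=D} with the one-line justification ``Combining this with Theorem~\ref{thm:hom_compactification} and Theorem~\ref{thm:hom_F},'' and your write-up simply makes that concatenation explicit, including the degree-shift bookkeeping and the observation that the internal grading passes through the bar resolutions.
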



\subsection{Homology of braid subgroups with one-dimensional coefficients}\label{subsec:H*_1d}

We revisit Example~\ref{exmp:V=W=k} when $\mathbf{k}$ is a field of characteristic $0$. Let the mixed-braided vector space $(V,W,\tau)$ be composed of one-dimensional $\mathbf{k}$-vector spaces $V=\mathbf{k}$ and $W = \mathbf{k}\{w\}$, with braidings $\sigma_V$, $\sigma_W$, and $\tau$ on $\mathbf{k}$ given by multiplications by $q$, $u$, and $p$ respectively, for some $p, q, u \in \mathbf{k}^\times$. Let $L$ be the $B_n(\mathbb{C}_m)$-representation on $V^{\otimes n} \otimes W^{\otimes m}$, and $\mathcal{L}$ the associated local system over $\mathrm{Conf}_n(\mathbb{C}_m)$. The braid action of $B_n(\mathbb{C}_m)$ on $V^{\otimes n} \otimes W^{\otimes m} \cong \mathbf{k}$ is given by $\sigma_i \mapsto q$ for all $1 \le i \le n-1$ and $\theta_{n,n+k} = \sigma_{n+k-1} \dots \sigma_{n+1} \tau_n \sigma_{n+1}^{-1} \dots \sigma_{n+k-1}^{-1} \mapsto u^{k-1} p (u^{-1})^{k-1} = p$ for $1 \le k \le m$. In this case, the quantum shuffle algebra $\mathfrak{A} = \mathfrak{A}(V)$ constructed from the braided vector space $(V,\sigma_V)$ is generated (as a $\mathbf{k}$-module) by the classes $x_n = [1|\dots|1]_n$, where there are $n$ occurrences of $1$. It has been shown that the algebra $\mathfrak{A}$ is isomorphic as graded rings to the quantum divided power algebra $\Gamma_q[x]$ \cite{etw17}, whose structure has been previously studied by \cite{cal06}.

\begin{defn}
    The \textit{quantum divided power algebra} $\Gamma_q[x]$ associated to $q \in \mathbf{k}^\times$ is additively generated by elements $x_n$ in degree $n$, equipped with the product
    \[ x_n \star x_m := {n+m \choose m}_q x_{n+m} \]
    where the quantum binomial coefficient is defined by
    \[ {a \choose b}_q = \dfrac{[a]_q [a-1]_q \cdots [a-b+1]_q}{[b]_q[b-1]_q \cdots [1]_q}; \quad \text{here} \quad [r]_q = \dfrac{1-q^r}{1-q} = 1+ q + \cdots + q^{r-1}. \]
\end{defn}

The isomorphism between $\Gamma_q[x]$ and $\mathfrak{A}$ sends the class $x_n$ to $[1|\dots|1]_n$ in $\mathfrak{A}$ \cite{etw17}. The following identification of the algebra $\Gamma_q[x]$ is due to Callegaro.

\begin{prop}[\cite{cal06}]\label{prop:q_div_pow_alg}
If $q$ is not a root of unity in $\mathbf{k}$, then there is an isomorphism $\Gamma_q[x] \cong \mathbf{k}[x_1]$.  If $q$ is a primitive $m^\mathrm{th}$ root of unity, then 
\[\Gamma_q[x] = \mathbf{k}[x_1]/x_1^{m} \otimes \Gamma[x_m].\]
\end{prop}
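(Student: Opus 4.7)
The plan is to exploit the fundamental identity $x_1^{\star n} = [n]_q!\, x_n$, which follows by induction from the definition $x_1 \star x_{n-1} = \binom{n}{1}_q x_n = [n]_q x_n$. This identity immediately reduces the problem to understanding when the quantum factorials $[n]_q!$ vanish, together with a computation of the quantum binomial coefficients $\binom{a}{b}_q$ at roots of unity.

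For the first case, when $q$ is not a root of unity, I would observe that $[n]_q = (1-q^n)/(1-q) \in \mathbf{k}^\times$ for all $n \ge 1$, so $[n]_q!$ is always a unit. The identity $x_1^{\star n} = [n]_q! x_n$ then allows us to solve $x_n = x_1^{\star n}/[n]_q!$, exhibiting $\Gamma_q[x]$ as generated by $x_1$ alone. A dimension count in each graded degree (both sides are one-dimensional) shows that the resulting map $\mathbf{k}[x_1] \to \Gamma_q[x]$ is an isomorphism of graded $\mathbf{k}$-algebras.

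For the second case, when $q$ is a primitive $m$-th root of unity, the key input I would invoke is the \emph{quantum Lucas theorem}: writing $a = a_1 m + a_0$ and $b = b_1 m + b_0$ with $0 \le a_0, b_0 < m$, one has $\binom{a}{b}_q = \binom{a_1}{b_1}\binom{a_0}{b_0}_q$ (and this vanishes when $a_0 < b_0$). Granting this, two computations fall out immediately: first, $[m]_q = 0$ forces $x_1^{\star m} = [m]_q! \cdot x_m = 0$, so we obtain a well-defined graded map $\mathbf{k}[x_1]/x_1^m \to \Gamma_q[x]$; second, $\binom{(k+\ell)m}{\ell m}_q = \binom{k+\ell}{\ell}$, which shows that the subalgebra generated by $\{x_{km}\}_{k\ge 0}$ carries precisely the multiplication of the classical divided power algebra $\Gamma[x_m]$, with $x_{km}$ playing the role of $x_m^{[k]}$.

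To assemble these pieces into the tensor product decomposition, I would combine them via the product $x_r \star x_{km}$ for $0 \le r < m$. Applying quantum Lucas once more gives $\binom{km+r}{km}_q = \binom{k}{k}\binom{r}{0}_q = 1$, so $x_r \star x_{km} = x_{km+r}$. Writing every $n \ge 0$ uniquely as $n = km+r$, this shows that the map
\[
\Phi: \mathbf{k}[x_1]/x_1^m \otimes \Gamma[x_m] \longrightarrow \Gamma_q[x], \qquad x_1^{\star r} \otimes x_m^{[k]} \longmapsto [r]_q! \cdot x_{km+r}
\]
is a graded algebra homomorphism (the $[r]_q!$ factor is a unit since $r < m$); it is a bijection on each graded piece, hence an isomorphism. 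The main obstacle in this argument is the quantum Lucas theorem, which is the classical computational heart of the statement; I would either cite it from the literature on quantum binomial coefficients or prove it by induction on $a_1+b_1$ using the $q$-Pascal identity $\binom{a}{b}_q = \binom{a-1}{b-1}_q + q^b \binom{a-1}{b}_q$ and the vanishing of $[m]_q$.
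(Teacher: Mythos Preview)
The paper does not supply a proof of this proposition; it is simply attributed to Callegaro \cite{cal06} and quoted as a known structural result. Your proposal is a correct and self-contained argument, and it is essentially the standard one: the identity $x_1^{\star n}=[n]_q!\,x_n$ handles the generic case, and the quantum Lucas theorem governs the root-of-unity case, producing both the truncated polynomial factor and the divided-power factor. The only point worth making explicit is that $\Gamma_q[x]$ is commutative (since $\binom{a+b}{a}_q=\binom{a+b}{b}_q$), so that the map $\Phi(a\otimes b)=a\star b$ out of the tensor product of commutative algebras is automatically multiplicative once you have checked it on each tensor factor separately; you have implicitly used this.
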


As analyzed in Example~\ref{exmp:V=W=k}, the left-braided vector space $(V,W,\sigma_V,\tau)$ is separable with the separated braiding  $\varphi := s\varphi'$ for some $s \in \mathbf{k}^{\times}$, where $\varphi'$ simply permutes two tensor factors. Recall that the $\mathfrak{A}$-bimodule $\mathfrak{M}$ is defined by
\[ \mathfrak{M} = \displaystyle \bigoplus_{n \ge 1} \bigoplus_{1 \le i \le n} V^{\otimes i-1} \otimes W \otimes V^{\otimes n-i}.\] In this case, $\mathfrak{M}$ as a $\mathbf{k}$-module is generated by the classes $y_{i,n} := [1|\dots|1|w^{(i)}|1|\dots|1]_n$ (where the superscript $i$ records the position of $w$) for all $n \ge 1$ and $1 \le i \le n$; in particular, denote $y_n := y_{n,n} = [1|\dots|1|w]_n$. Definition~\ref{defn:M} specifies the action of $B_n$ on these generators as follows:
\[ \sigma_j (y_{i,n}) = \begin{cases}
q y_{i,n} \hspace{.5in} \hfill j \ne i-1, i \\
s y_{i-1,n} \hfill j = i-1 \\
ps^{-1} y_{i+1,n} \hfill j = i.
\end{cases}
\]

In \cite{h22}, we showed that $\mathfrak{M}$ is in fact a free left (or right) $\mathfrak{A}$-module with respect to the basis $\mathcal{Y} = \{y_1, y_2, \dots\}$, and that the left and right multiplications of $\mathfrak{M}$ are related via the following formula:
\[ y_n x_m = \dfrac{1}{s^m}\sum_{h=0}^m \left[ \dfrac{1}{q^{(m-h)(n-1+h)}} {n-1+h \choose h}_q \prod_{j=0}^{h-1} \left( p - \dfrac{1}{q^{n-1+j}} \right) \right]  x_{m-h}y_{n+h}. \]
In particular, if $q$ is a primitive $m^\mathrm{th}$ root of unity, then
\[\displaystyle y_n x_m = \dfrac{1}{s^m} \left[ \left\lceil \frac{n}{m} \right\rceil \prod_{k=0}^{m-1} \left( p - \dfrac{1}{q^{n-1+k}} \right) \right]  y_{n+m} + \langle x_{h} y_{n+m-h} \rangle\]
where $\lceil x \rceil$ denotes the ceiling function of $x$, and $\langle x_{h} y_{n+m-h} \rangle$ consists of all terms of the form $x_{h} y_{n+m-h}$ for $1 \le h \le m$.

We will now proceed to compute the cellular homology of the $1$-point compactification of $\mathrm{Conf}_n(\mathbb{C}_m)$ with coefficients in the local system $\mathcal{L}$. Including the sign twist $\epsilon$ into the braidings $\sigma$ and $\varphi$ is the same as replacing $q$ and $s$ with $-q$ and $-s$, respectively. Let $\mathfrak{A}(V_\epsilon) = \Gamma_{-q}[x] =: \Gamma$. Since each $\mathfrak{M}_i = \mathfrak{M}$ is free as a $\Gamma$-module, Corollary~\ref{cor:tor_n_times} specifies to
\[ H_*(\mathrm{Conf}_n(\mathbb{C}_m) \cup \{ \infty \}, \{ \infty \}; \mathcal{L}) \cong \left\{ \left( \mathfrak{M}_1 \underset{\Gamma}{\otimes} \mathfrak{M}_2 \underset{\Gamma}{\otimes} \dots \underset{\Gamma}{\otimes} \mathfrak{M}_m \right) \underset{\Gamma^e}{\overset{L}{\otimes}} \mathbf{k} \right\}[-n][n+m]. \]
Because each $\mathfrak{M}_i$ is free as a left (or right) $\Gamma$-module with respect to a basis $\mathcal{Y}_i$, the right hand side can be simplified further to
\[ \left( \mathfrak{M}_1 \underset{\Gamma}{\otimes} \mathfrak{M}_2 \underset{\Gamma}{\otimes} \dots \underset{\Gamma}{\otimes} \mathfrak{M}_m \right) \underset{\Gamma^e}{\overset{L}{\otimes}} \mathbf{k} \cong (\mathbf{k}\mathcal{Y}_1 \otimes \mathbf{k}\mathcal{Y}_2 \otimes \dots \otimes \mathbf{k}\mathcal{Y}_m) \underset{\Gamma}{\overset{L}{\otimes}} \mathbf{k}\]
where $\mathcal{Y}_i = \{ y_1, y_2, \dots \}$ is the chosen basis for $\mathfrak{M}_i$ as a free left $\Gamma$-module.

In \cite{h22}, we fully computed this homology for all $p,q \in \mathbf{k}^\times$ when $m=1$. The computation is divided into cases depending on whether $-q$ is a root of unity and $p$ is a power of $-q$. For the purpose of the arithmetic application of this paper, we assume $-q$ is a primitive $r^\mathrm{th}$ root of unity, and demonstrate this computation when $p=-q$ and when $p$ is not a power of $-q$.

{\it Case 1: $p = -q$.}
By Proposition~\ref{prop:q_div_pow_alg}, $\Gamma = \mathbf{k}[x_1]/x_1^{r} \otimes \Gamma[x_r]$, so it suffices to study the right multiplication of the generators $y_n$ by $x_1$ and $x_r$. Let $\Lambda_{r} := \mathbf{k}[x_1]/x_1^r$ denote the degree-$r$ truncated polynomial algebra in variable $x_1$. If $\mathbf{k}$ has characteristic $0$, there is an isomorphism $\Gamma[x_r] \cong \mathbf{k}[x_r]$. Consider the multiplication by $x_r$. In this case, the above formula gives
\[\displaystyle y_n x_r = \dfrac{1}{(-s)^r}\left[ \left\lceil \frac{n}{r} \right\rceil \prod_{k=0}^{r-1} \left( p - \dfrac{1}{(-q)^{n-1+k}} \right) \right]  y_{n+r}\]
in $\mathbf{k} \mathcal{Y}$. Since the power of $-q$ in the product cycles through $r$ consecutive values, we see that $y_n x_r = 0$ if and only if $p$ is a power of $-q$. Here, we have $p = -q$, so $\mathbf{k}\mathcal{Y}$ is trivial as a $\mathbf{k}[x_r]$-module. On the other hand,
\[ y_n x_1 = \frac{1-(-q)^n}{-s(1+q)} \left( p - \frac{1}{(-q)^{n-1}} \right) y_{n+1} = \frac{1-(-q)^n}{-s(1+q)} \left( -q - \frac{1}{(-q)^{n-1}} \right) y_{n+1} \]
vanishes iff $r$ divides $n$. It follows that $\mathbf{k}\mathcal{Y}$ is freely generated by $\{ y_1, y_{r+1}, \dots\}$ as a $\Lambda_r$-module, so we have
    \[ \begin{array}{r l}
         \mathbf{k}\mathcal{Y}_1 \otimes \dots \otimes \mathbf{k}\mathcal{Y}_m & \cong \mathbf{k}\{y_1, y_2,\dots\} \otimes \dots \otimes \mathbf{k}\{y_1, y_2,\dots\} \otimes \mathbf{k}\{y_1, y_{r+1}, \dots\} [x_1]/x_1^r \\[5pt]
        & \displaystyle \cong \bigoplus_{a_1, .., a_{m-1} \ge 1, a_m \ge 0} \Lambda_r \{y_{a_1} \otimes \dots \otimes y_{a_{m-1}} \otimes y_{ra_m+1}\} \\[15pt]
        & \displaystyle \cong \bigoplus_{a_1, \dots, a_{m-1} \ge 1, a_m \ge 0} \Sigma^{a_1 + \dots + a_{m-1} + ra_m+1} \Lambda_r
    \end{array} \]
as $\Lambda_r$-modules. We then have
    \[\begin{array}{r l}
         (\mathbf{k}\mathcal{Y}_1 \otimes \mathbf{k}\mathcal{Y}_2 \otimes \dots \otimes \mathbf{k}\mathcal{Y}_m) \underset{\Gamma}{\overset{L}{\otimes}} \mathbf{k} & \cong \left( \displaystyle \bigoplus_{a_1, \dots, a_{m-1} \ge 1, a_m \ge 0} \Sigma^{a_1 + \dots + a_{m-1} + ra_m+1} \Lambda_r \right) \underset{\Lambda_r \otimes \mathbf{k}[x_r]}{\overset{L}{\otimes}} \mathbf{k}\\[15pt]
         & \displaystyle \cong \bigoplus_{a_1, \dots, a_{m-1} \ge 1, a_m \ge 0} \Sigma^{a_1 + \dots + a_{m-1} + ra_m+1} \mathbf{k} \underset{\mathbf{k}[x_r]}{\overset{L}{\otimes}} \mathbf{k} \\[15pt]
         & \displaystyle \cong \bigoplus_{a_1, \dots, a_{m-1} \ge 1, a_m \ge 0} \Sigma^{a_1 + \dots + a_{m-1} + ra_m+1} \Lambda[z_r]
    \end{array} \]
for some $z_r \in \mathrm{Tor}_{1,r}$, i.e.,
    \[ \left\{(\mathbf{k}\mathcal{Y}_1 \otimes \mathbf{k}\mathcal{Y}_2 \otimes \dots \otimes \mathbf{k}\mathcal{Y}_m) \underset{\Gamma}{\overset{L}{\otimes}} \mathbf{k} \right\}_{j,n} = \begin{cases}
        \mathbf{k}^{\oplus P(n+r-1)} \quad \text{for } j = 0\\
        \mathbf{k}^{\oplus P(n-1)} \hspace{.3in} \text{for } j = 1\\
        0 \hspace{.8in} \text{else,}
    \end{cases} \]
where $P(n)$ is the number of compositions $(a_1,\dots,a_{m-1},ra_m)$ of $n$ $(a_i \ge 1)$, which can be computed explicitly as
\[ P(n) = \sum_{a=1}^{\lfloor\frac{n}{r}\rfloor} P_{m-1}(n-ra) = \sum_{a=1}^{\lfloor\frac{n}{r}\rfloor} {n-ra-1 \choose m-2}\]
where the partition function $P_k(n) = {n-1 \choose k-1}$ counts the number of $k$-part compositions of $n$.
By applying the universal coefficient theorem and Poincar\'e duality to the dual over $\mathbf{k}$ of the left side of Corollary~\ref{cor:tor_n_times}, we have
\[\begin{split}
    H_* (\text{Conf}_{n}(\mathbb{C}_m) \cup \{ \infty \}, \{\infty\} ; \mathcal{L})^* & \cong H^* (\text{Conf}_{n}(\mathbb{C}_m) \cup \{ \infty \}, \{\infty\} ; \mathcal{L})\\
    & \cong H^*_c (\text{Conf}_{n}(\mathbb{C}_m); \mathcal{L})\\
    & \cong H_{2n-*} (\text{Conf}_{n}(\mathbb{C}_m); \mathcal{L}),
\end{split}\]
i.e., $H_*(\mathrm{Conf}_n(\mathbb{C}_m); \mathcal{L}) \cong H_{2n-*}(\mathrm{Conf}_n(\mathbb{C}_m) \cup \{ \infty \}, \{ \infty \}; \mathcal{L})^*$. It follows from the above computation that:

\begin{thm}\label{thm:H*_punct_p=-q}
\[H_j(\mathrm{Conf}_n(\mathbb{C}_m); \mathcal{L}) = \begin{cases}
        \mathbf{k}^{\oplus P(n+m+r-1)} \quad \text{for } j = n\\
        \mathbf{k}^{\oplus P(n+m-1)} \hspace{.29in} \text{for } j = n-1\\
        0 \hspace{.95in} \text{else.}
    \end{cases}\]
\end{thm}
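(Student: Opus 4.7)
The plan is to assemble the computation carried out in the paragraphs immediately preceding the statement and then to translate it via Poincar\'e duality. The starting point is Corollary~\ref{cor:tor_n_times}: after using that each $\mathfrak{M}_i$ is free as a left $\Gamma$-module to collapse the iterated derived tensor into a strict one, it rewrites the relative homology of the one-point compactification as the bigraded derived tensor product
\[
\bigl\{(\mathbf{k}\mathcal{Y}_1\otimes\cdots\otimes\mathbf{k}\mathcal{Y}_m)\overset{L}{\underset{\Gamma}{\otimes}}\mathbf{k}\bigr\}[-n][n+m].
\]
Under the specialization $p=-q$ with $-q$ a primitive $r$-th root of unity, the closed-form formula for $y_n\star x_m$ recalled in Section~\ref{subsec:H*_1d} shows that $x_r$ annihilates $\mathbf{k}\mathcal{Y}_m$ (because $p=-q$ forces a factor in the defining product to vanish) and that $y_n\cdot x_1=0$ iff $r\mid n$. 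Hence $\mathbf{k}\mathcal{Y}_m$ is a free right $\Lambda_r$-module on $\{y_{rk+1}\}_{k\ge 0}$, $\mathbf{k}[x_r]$ acts trivially on the full tensor product, and the decomposition
\[
\mathbf{k}\mathcal{Y}_1\otimes\cdots\otimes\mathbf{k}\mathcal{Y}_m\;\cong\;\bigoplus_{\substack{a_1,\ldots,a_{m-1}\ge 1\\ a_m\ge 0}}\Sigma^{a_1+\cdots+a_{m-1}+ra_m+1}\Lambda_r
\]
drops out as a $\Gamma$-module.

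Next I would apply a K\"unneth splitting along $\Gamma=\Lambda_r\otimes\mathbf{k}[x_r]$: each rank-one summand is free over $\Lambda_r$, so $\Lambda_r\overset{L}{\underset{\Lambda_r}{\otimes}}\mathbf{k}=\mathbf{k}$ sits in homological degree $0$; the Koszul resolution for $\mathbf{k}[x_r]$ gives $\mathrm{Tor}^{\mathbf{k}[x_r]}_*(\mathbf{k},\mathbf{k})\cong\Lambda[z_r]$ with $z_r$ in homological degree $1$ and internal degree $r$. Counting which shifted summands $\Sigma^c\Lambda[z_r]$ contribute to bidegree $(j,N)$ --- that is, solving $c+jr=N$ for $j\in\{0,1\}$ --- produces the bigraded formula $\mathbf{k}^{\oplus P(N+r-1)}$ in homological degree $0$ and $\mathbf{k}^{\oplus P(N-1)}$ in homological degree $1$. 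The only nontrivial combinatorial check is the telescoping identity $P(N+r-1)=\binom{N-2}{m-2}+P(N-1)$, which reconciles the cases $a_m=0$ and $a_m\ge 1$ with the stated definition of $P$.

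Finally, specializing to internal degree $N=n+m$ and applying the homological shift $[-n]$ from Corollary~\ref{cor:tor_n_times} identifies $H_n$ and $H_{n+1}$ of the compactification with $\mathbf{k}^{\oplus P(n+m+r-1)}$ and $\mathbf{k}^{\oplus P(n+m-1)}$ respectively; the Poincar\'e duality isomorphism $H_j(\mathrm{Conf}_n(\mathbb{C}_m);\mathcal{L})\cong H_{2n-j}(\mathrm{Conf}_n(\mathbb{C}_m)\cup\{\infty\},\{\infty\};\mathcal{L})^*$, already derived just before the statement and valid because $\mathbf{k}$ is a field, then deposits these values in homological degrees $j=n$ and $j=n-1$ of $H_*(\mathrm{Conf}_n(\mathbb{C}_m);\mathcal{L})$, matching the claim. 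No step is genuinely deep; the real difficulty is bookkeeping, specifically interleaving the three grading conventions (the $[-n][n+m]$ shift from Corollary~\ref{cor:tor_n_times}, the internal contribution of degree $r$ from $z_r$, and the Poincar\'e flip $2n-j$) so that the answer emerges as $P(n+m+r-1)$ rather than any off-by-$r$ variant.
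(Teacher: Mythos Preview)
Your proposal is correct and follows essentially the same route as the paper: reduce via Corollary~\ref{cor:tor_n_times} and freeness of $\mathfrak{M}$ to the Tor computation $(\mathbf{k}\mathcal{Y}_1\otimes\cdots\otimes\mathbf{k}\mathcal{Y}_m)\otimes_\Gamma^L\mathbf{k}$, use the explicit formulas for $y_n x_1$ and $y_n x_r$ to identify this as a sum of shifted $\Lambda_r$'s with trivial $\mathbf{k}[x_r]$-action, compute Tor via the K\"unneth splitting $\Gamma\cong\Lambda_r\otimes\mathbf{k}[x_r]$ and the Koszul resolution, and finish with Poincar\'e duality. You are slightly more explicit than the paper in flagging the combinatorial identity $P(N+r-1)=\binom{N-2}{m-2}+P(N-1)$ needed to match the $a_m\ge 0$ summation to the definition of $P$, but otherwise the arguments coincide.
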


In particular, $H_j(\mathrm{Conf}_n(\mathbb{C}_m); \mathcal{L}) = 0$ for all $j \le n-2$.

Recall that there is a fibration
\[ \mathrm{Conf}_n(\mathbb{C}_m) \to \mathrm{Conf}_{n,m}(\mathbb{C}) \to \mathrm{Conf}_m (\mathbb{C}) \]
\cite{fadneu62}, which induces a spectral sequence that computes
\[ H_p(\mathrm{Conf}_m (\mathbb{C}); H_q(\mathrm{Conf}_n(\mathbb{C}_m); \mathcal{L})) \Rightarrow H_{p+q}(\mathrm{Conf}_{n,m}(\mathbb{C}); \mathcal{F})\]
where $\mathcal{F}$ is the local system over $\mathrm{Conf}_{n,m}(\mathbb{C})$ associated with the $B_{n,m}$-representation on $V^{\otimes n} \otimes W^{\otimes m}$ (see Example~\ref{exmp:V=W=k}), i.e., $\mathcal{L} = i^*\mathcal{F}$ is the pullback of $\mathcal{F}$ by the inclusion $i:\mathrm{Conf}_n(\mathbb{C}_m)\to\mathrm{Conf}_{n,m}(\mathbb{C})$. Since $H_q(\mathrm{Conf}_n(\mathbb{C}_m);\mathcal{L}) = 0$ for all $q \le n-2$, it follows that $H_j(\mathrm{Conf}_{n,m}(\mathbb{C});\mathcal{F}) = 0$ for all $j \le n-2$. This vanishing line holds when the braiding $\sigma_W$ is given by multiplication by an arbitrary unit $u$. Now assume $u=q$ (e.g., $q = u = 1$, $p = -1$). Notice that the space $\mathrm{Conf}_{n,m}(\mathbb{C})$ is symmetric in the sense that $\mathrm{Conf}_{n,m}(\mathbb{C}) \cong \mathrm{Conf}_{m,n}(\mathbb{C})$, and the action of the representation with respect to $n$ and $m$ is also symmetric since $\sigma_V = \sigma_W$. By the same argument above, we have $H_j(\mathrm{Conf}_{n,m}(\mathbb{C});\mathcal{F}) = 0$ for all $j \le m-2$. This proves a stronger vanishing range for the cellular homology of $\mathrm{Conf}_{n,m}(\mathbb{C})$:

\begin{cor}\label{cor:H*_mix_p=-q}
$H_j(\mathrm{Conf}_{n,m}(\mathbb{C});\mathcal{F}) = 0$ for all $j \le \mathrm{max}(n,m) -2$.
\end{cor}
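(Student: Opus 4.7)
The plan is to combine the computation of Theorem~\ref{thm:H*_punct_p=-q} with the Leray-Serre spectral sequence of the Fadell-Neuwirth fibration $\mathrm{Conf}_n(\mathbb{C}_m) \to \mathrm{Conf}_{n,m}(\mathbb{C}) \to \mathrm{Conf}_m(\mathbb{C})$. Since $\mathcal{L} = i^*\mathcal{F}$ for the fiber inclusion $i$, this sequence reads
\[ E^2_{p,q} = H_p(\mathrm{Conf}_m(\mathbb{C}); H_q(\mathrm{Conf}_n(\mathbb{C}_m); \mathcal{L})) \Rightarrow H_{p+q}(\mathrm{Conf}_{n,m}(\mathbb{C}); \mathcal{F}). \]
Theorem~\ref{thm:H*_punct_p=-q} shows the fiber homology vanishes in degrees $q \le n-2$, so $E^2_{p,q}$ (and hence $E^\infty_{p,q}$) is zero in that range; the first-quadrant structure then forces $H_j(\mathrm{Conf}_{n,m}(\mathbb{C}); \mathcal{F}) = 0$ for $j \le n-2$. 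This step does not use any relationship between $\sigma_V$ and $\sigma_W$, so it holds for arbitrary $u$.

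For the other half of the range, I would invoke a color-swap symmetry. When $u = q$, the representation from Example~\ref{exmp:V=W=k} sends every Artin generator $\sigma_i$ ($i \ne n$) of $B_{n,m}$ to multiplication by the common scalar $q$, while the mixed generator $\tau_n$ acts by $p$. Under the homeomorphism $\mathrm{Conf}_{n,m}(\mathbb{C}) \cong \mathrm{Conf}_{m,n}(\mathbb{C})$ that exchanges the colors blue and red, the induced group isomorphism $B_{n,m} \cong B_{m,n}$ carries these generators to the corresponding generators in $B_{m,n}$ with identical scalar actions. Consequently $\mathcal{F}$ pulls back to the analogous rank-$1$ local system $\mathcal{F}'$ on $\mathrm{Conf}_{m,n}(\mathbb{C})$, and applying the first step with the roles of $n$ and $m$ interchanged yields $H_j(\mathrm{Conf}_{n,m}(\mathbb{C}); \mathcal{F}) \cong H_j(\mathrm{Conf}_{m,n}(\mathbb{C}); \mathcal{F}') = 0$ for $j \le m-2$.

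Taking the union of the two vanishing bounds produces $H_j(\mathrm{Conf}_{n,m}(\mathbb{C}); \mathcal{F}) = 0$ for $j \le \max(n,m) - 2$, as desired. The only nontrivial point is verifying that the color-swap homeomorphism intertwines the two local systems, and this reduces to the equality $\sigma_V = \sigma_W$, which is precisely where the hypothesis $u = q$ enters. Everything else is routine bookkeeping with the spectral sequence.
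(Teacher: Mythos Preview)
Your proposal is correct and follows essentially the same argument as the paper: both use the Fadell--Neuwirth fibration's Serre spectral sequence together with Theorem~\ref{thm:H*_punct_p=-q} to obtain vanishing for $j\le n-2$, and then exploit the color-swap symmetry $\mathrm{Conf}_{n,m}(\mathbb{C})\cong\mathrm{Conf}_{m,n}(\mathbb{C})$ (valid precisely when $u=q$) to obtain the bound $j\le m-2$. Your write-up is slightly more explicit about why the symmetry intertwines the local systems, but the structure is identical.
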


{\it Case 2: $p$ is not a power of $-q$}. In this case, \cite{h22} has shown that $\mathbf{k}\mathcal{Y} \cong \Sigma^1 \Gamma$, thus
\[\begin{array}{r l}
         (\mathbf{k}\mathcal{Y}_1 \otimes \mathbf{k}\mathcal{Y}_2 \otimes \dots \otimes \mathbf{k}\mathcal{Y}_m) \underset{\Gamma}{\overset{L}{\otimes}} \mathbf{k} & \cong \left( \displaystyle \bigoplus_{a_1, \dots, a_{m-1} \ge 1} \Sigma^{a_1 + \dots + a_{m-1}+1} \Gamma \right) \underset{\Gamma}{\overset{L}{\otimes}} \mathbf{k}\\[15pt]
         & \displaystyle \cong \bigoplus_{a_1, \dots, a_{m-1} \ge 1} \Sigma^{a_1 + \dots + a_{m-1}+1} \mathbf{k}.
\end{array} \]
In particular, we have
\[ \left\{(\mathbf{k}\mathcal{Y}_1 \otimes \mathbf{k}\mathcal{Y}_2 \otimes \dots \otimes \mathbf{k}\mathcal{Y}_m) \underset{\Gamma}{\overset{L}{\otimes}} \mathbf{k} \right\}_{j,n} = \begin{cases}
        \mathbf{k}^{\oplus P_{m-1}(n-1)} \quad \text{for } j = 0\\
        0 \hspace{.81in} \text{else,}
\end{cases} \]
By applying the universal coefficient theorem and Poincar\'{e} duality over $\mathbf{k}$, we have:
\begin{thm}\label{thm:H*_punct_p_not_power}
$H_j (\mathrm{Conf}_n(\mathbb{C}_m);\mathcal{L}) = \begin{cases}
        \mathbf{k}^{\oplus {n+m-2 \choose m-2}} \quad \text{ for } j=n\\
        0 \quad \text{ else.}
        \end{cases}$
\end{thm}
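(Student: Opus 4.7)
The plan is to specialize Corollary~\ref{cor:tor_n_times} to the one-dimensional mixed-braided vector space under consideration, mirroring the Case~1 derivation of Theorem~\ref{thm:H*_punct_p=-q} already recorded above. The starting input, contained in the paragraph preceding the statement, is the isomorphism $\mathbf{k}\mathcal{Y}\cong\Sigma^1\Gamma$ of (left or right) $\Gamma$-modules: when $p$ is not a power of $-q$, the multiplications $y_n\mapsto y_nx_r$ and $y_n\mapsto y_nx_1$ are both always nonzero, so $\mathbf{k}\mathcal{Y}$ is freely generated over $\Gamma$ by $y_1$.

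First I would insert this freeness into the iterated tensor product. Since each of the $m$ factors $\mathbf{k}\mathcal{Y}_i$ is a shifted free $\Gamma$-module, the derived tensor product collapses to an ordinary tensor with $\mathbf{k}$:
\[
\bigl(\mathbf{k}\mathcal{Y}_1\otimes\cdots\otimes\mathbf{k}\mathcal{Y}_m\bigr)\underset{\Gamma}{\overset{L}{\otimes}}\mathbf{k}
\;\cong\;\bigoplus_{a_1,\dots,a_{m-1}\ge 1}\Sigma^{a_1+\cdots+a_{m-1}+1}\mathbf{k},
\]
a direct sum of copies of $\mathbf{k}$ sitting entirely in homological degree $0$. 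The internal degree $n+m$ summand is then indexed by $(m-1)$-part compositions of $n+m-1$, and the partition function gives $P_{m-1}(n+m-1)=\binom{n+m-2}{m-2}$.

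Next I would feed this through Corollary~\ref{cor:tor_n_times}, whose shift $[-n][n+m]$ identifies the homological-degree-$0$, internal-degree-$(n+m)$ Tor class with $H_n(\mathrm{Conf}_n(\mathbb{C}_m)\cup\{\infty\},\{\infty\};\mathcal{L})$, all other compactification homology groups being zero. Finally I would translate to $\mathrm{Conf}_n(\mathbb{C}_m)$ itself by the same chain of isomorphisms used in Case~1: the universal coefficient theorem over the field $\mathbf{k}$ dualizes homology to cohomology, and Poincar\'e duality for the orientable real $2n$-manifold $\mathrm{Conf}_n(\mathbb{C}_m)$ gives
\[
H_*(\mathrm{Conf}_n(\mathbb{C}_m);\mathcal{L})\;\cong\;H_{2n-*}(\mathrm{Conf}_n(\mathbb{C}_m)\cup\{\infty\},\{\infty\};\mathcal{L}).
\]
Since the compactification homology is concentrated in degree $n$, so is $H_*(\mathrm{Conf}_n(\mathbb{C}_m);\mathcal{L})$, yielding the rank $\binom{n+m-2}{m-2}$ claimed.

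I do not anticipate any serious obstacle: the essential module-theoretic input $\mathbf{k}\mathcal{Y}\cong\Sigma^1\Gamma$ is already established in \cite{h22}, and the remainder is bookkeeping of the homological and internal gradings plus an elementary compositions count. The one point deserving attention is matching the degree shifts in Corollary~\ref{cor:tor_n_times} with the Poincar\'e duality translation; this matching can be verified directly against the parallel computation already carried out in Case~1.
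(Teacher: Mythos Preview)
Your proposal is correct and follows essentially the same route as the paper: invoke $\mathbf{k}\mathcal{Y}\cong\Sigma^1\Gamma$ from \cite{h22}, collapse the derived tensor product to $\bigoplus_{a_1,\dots,a_{m-1}\ge 1}\Sigma^{a_1+\cdots+a_{m-1}+1}\mathbf{k}$, read off the internal degree $n+m$ part via $P_{m-1}(n+m-1)=\binom{n+m-2}{m-2}$, and then translate through Corollary~\ref{cor:tor_n_times}, the universal coefficient theorem, and Poincar\'e duality exactly as in Case~1. The only cosmetic slip is that your displayed duality isomorphism should carry a dual on the right-hand side (as in the paper's derivation), though over the field $\mathbf{k}$ this does not affect the dimension count.
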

With this, we obtain a slightly better vanishing range for the homology of $\mathrm{Conf}_{n,m}(\mathbb{C})$ in this case, again when $u=q$.
\begin{cor}\label{cor:H*_mix_p_not_power}
$H_j(\mathrm{Conf}_{n,m}(\mathbb{C});\mathcal{F}) = 0$ for all $j \le \mathrm{max}(n,m) -1$.
\end{cor}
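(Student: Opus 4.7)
The plan is to mirror the argument for Corollary~\ref{cor:H*_mix_p=-q} verbatim, but feeding in the stronger input provided by Theorem~\ref{thm:H*_punct_p_not_power}. The starting point is the Fadell--Neuwirth fibration
\[\mathrm{Conf}_n(\mathbb{C}_m) \to \mathrm{Conf}_{n,m}(\mathbb{C}) \to \mathrm{Conf}_m(\mathbb{C}),\]
whose associated Leray--Serre spectral sequence computes
\[E^2_{p,q} = H_p(\mathrm{Conf}_m(\mathbb{C}); H_q(\mathrm{Conf}_n(\mathbb{C}_m); \mathcal{L})) \Rightarrow H_{p+q}(\mathrm{Conf}_{n,m}(\mathbb{C}); \mathcal{F}),\]
where, as in the preceding corollary, $\mathcal{L} = i^*\mathcal{F}$ is the restriction of $\mathcal{F}$ along the fiber inclusion.

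The key input is Theorem~\ref{thm:H*_punct_p_not_power}: under the hypothesis that $p$ is not a power of $-q$, the homology $H_q(\mathrm{Conf}_n(\mathbb{C}_m); \mathcal{L})$ is concentrated entirely in degree $q = n$. Consequently, $E^2_{p,q} = 0$ whenever $q \neq n$, so the spectral sequence is concentrated on a single horizontal line and the only possibly nonzero abutments satisfy $j = p + q \geq n$. This already gives $H_j(\mathrm{Conf}_{n,m}(\mathbb{C}); \mathcal{F}) = 0$ for all $j \leq n - 1$.

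To upgrade the bound to $\max(n,m) - 1$, I invoke the symmetry of the setup when the braiding of $W$ is taken to be $u = q$: the homeomorphism $\mathrm{Conf}_{n,m}(\mathbb{C}) \cong \mathrm{Conf}_{m,n}(\mathbb{C})$ interchanges the roles of the blue and red points, and since $\sigma_V = \sigma_W$ and both $V$ and $W$ are one-dimensional, the local system $\mathcal{F}$ pulls back to the analogously-defined local system on $\mathrm{Conf}_{m,n}(\mathbb{C})$. Running the identical spectral sequence argument for the fibration $\mathrm{Conf}_m(\mathbb{C}_n) \to \mathrm{Conf}_{m,n}(\mathbb{C}) \to \mathrm{Conf}_n(\mathbb{C})$ yields $H_j(\mathrm{Conf}_{n,m}(\mathbb{C}); \mathcal{F}) = 0$ for $j \leq m - 1$, and combining the two bounds completes the proof.

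There is essentially no obstacle: once Theorem~\ref{thm:H*_punct_p_not_power} is in hand the spectral sequence collapses to a single row, and the only point requiring a moment of care is verifying that the hypothesis $u = q$ really does make the local system $\mathcal{F}$ invariant (up to isomorphism) under the swap $(n,m) \leftrightarrow (m,n)$, which is immediate from Example~\ref{exmp:V=W=k} since all the relevant scalar braidings on $\mathbf{k}$ coincide after the identification $V = W$.
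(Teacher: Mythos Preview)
Your proposal is correct and follows exactly the approach the paper intends: the paper's own ``proof'' is simply the phrase ``With this, we obtain a slightly better vanishing range \ldots\ again when $u=q$,'' which points back to the Fadell--Neuwirth/Serre spectral sequence argument preceding Corollary~\ref{cor:H*_mix_p=-q}, now fed with the sharper input of Theorem~\ref{thm:H*_punct_p_not_power}. Your write-up of the single-row collapse and the $u=q$ symmetry is precisely what the paper leaves implicit.
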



\section{Character sums of the resultant}\label{sec:char_sum}

In this final section, we will relate the resultant of pairs of monic square-free coprime polynomials to local systems on bicolor configuration spaces, and prove our main theorem about the character sums 
\[ F_\chi(n,m,q) = \sum_{(f,g)\in \mathrm{Conf}_{n,m}(\mathbb{F}_q)} \chi(\mathcal{R}(f,g))\]
for any nontrivial character $\chi:\mathbb{F}_q \to \mathbb{C}$ by leveraging our new topological computations.

Recall that on $\mathrm{Conf}_{n,m}$, the resultant can be interpreted as a map $\mathcal{R}: \mathrm{Conf}_{n,m} \to \mathbb{A}^1 \setminus \{0\}$. In the analytic topology over $\mathbb{C}$, the map $\mathcal{R} : \mathrm{Conf}_{n,m}(\mathbb{C}) \to \mathbb{C}^\times$ is given by the product formula
\[ \mathcal{R}(x_1, \dots, x_n, y_1, \dots, y_m) = \prod_{i,j}(x_i - y_j) \]
where $x_1,\dots,x_n, y_1, \dots, y_m$ correspond to the roots of two square-free coprime polynomials. Recall that the fundamental group $B_{n,m}$ of $\mathrm{Conf}_{n,m}(\mathbb{C})$ has three types of generators: blue-blue crossings, red-red crossings, and blue-red wraparounds. Consider the following representative of a blue-blue crossing in $\pi_1(\mathrm{Conf}_{n,m}(\mathbb{C}))$: let $x$ be a configuration with two blue points $x_i = -1, x_{i+1} = 1$, and all other points far away, and let $\alpha$ be the loop in $\mathrm{Conf}_{n,m}(\mathbb{C})$ based at $x$ given by an $e^{\pi i}$-rotation on $x_i, x_{i+1}$, and identity on the rest. Observe that each term in the product formula of the resultant has a negligible deviation as the input varies on $\alpha$, so in particular, the loop $\mathcal{R}\alpha$ is contractible. A similar observation can be made for a red-red crossing. Meanwhile, for a representative of a blue-red wraparound, let $x$ now be a configuration with a blue point $x_i = 1$, a red point $y_j = 0$, and all other points far away, and let $\alpha$ be the loop in $\mathrm{Conf}_{n,m}(\mathbb{C})$ given by an $e^{2\pi i}$-rotation on $x_i$ and identity on the rest. In this case, almost every term in the product formula of the resultant again does not change much as the input varies on $\alpha$, except for the term $x_i - y_j = e^{2\pi ti}$ for $t \in [0,1]$. This implies that the loop $\mathcal{R}\alpha$ travels around the puncture of $\mathbb{C}^\times$ precisely once. From these analyses, the homomorphism of fundamental groups $\mathcal{R}_*:B_{n,m} \to \mathbb{Z}$ induced by the resultant map sends the generators $\sigma_i$ of $B_{n,m}$ to $0$ ($1 \le i \le n+m-1, i\ne n$) and $\tau_n$ to $1$, i.e., counting the winding number of blue strands wrapping around red strands. This map agrees with the induced homomorphism on \'etale fundamental groups $\mathcal{R}_*:\widehat{B_{n,m}}\to\widehat{\mathbb{Z}}$, i.e., the diagram
\[ \begin{tikzcd}
\widehat{B_{n,m}} \arrow{r}{\mathcal{R}_*} & \widehat{\mathbb{Z}} \\%
B_{n,m} \arrow{r}{\mathcal{R}_*} \arrow[hookrightarrow]{u} & \mathbb{Z} \arrow[hookrightarrow]{u}
\end{tikzcd} \]
commutes.

Recall that the {\it Kummer sheaf} $\mathcal{L}_\chi$ associated to a character $\chi$ of $\mathbb{F}_q$ is the rank-$1$ local system on $\mathbb{A}^1_{\mathbb{F}_q}\setminus \{0\}$ defined by means of the Lang isogeny, with a special property that $\mathrm{tr}(\mathrm{Frob}_q|(\mathcal{L}_{\chi})_x) = \chi(x)$ for all $x \in \mathbb{F}_q^\times$ (see, e.g., \cite{ngo}). By pulling back this sheaf by the resultant map $\mathcal{R}:\mathrm{Conf}_{n,m}\to\mathbb{A}^1 \setminus \{0\}$, we obtain a rank-$1$ local system $\mathcal{R}^*\mathcal{L}_\chi$ on $\mathrm{Conf}_{n,m}$ such that $\mathrm{tr}(\mathrm{Frob}_q|(\mathcal{R}^*\mathcal{L}_{\chi})_{(f,g)}) = \chi(\mathcal{R}(f,g))$ for all $(f,g) \in \mathrm{Conf}_{n,m}$. It follows that the character sum $F_\chi(n,m,q)$ can be written as
\[ F_\chi(n,m,q) = \sum_{(f,g)\in \mathrm{Conf}_{n,m}(\mathbb{F}_q)} \mathrm{tr}(\mathrm{Frob}_q|(\mathcal{R}^*\mathcal{L}_{\chi})_{(f,g)}) \]
and thus, by the Grothendieck-Lefschetz trace formula with twisted coefficients, can be approached by studying the cohomology groups of $\mathrm{Conf}_{n,m}$ with coefficients in $\mathcal{R}^*\mathcal{L}_\chi$.

\begin{thm}\label{thm:main_nb}
    Let $\chi$ be a nontrivial character of $\mathbb{F}_q$. Then
    \[|F_\chi(n,m,q)| \le 2^{2n+2m-1}\frac{q^{n+m+1-\mathrm{max}(n,m)/2}-1}{\sqrt{q}-1}\]
    if $\chi$ is quadratic, and
    \[|F_\chi(n,m,q)| \le 2^{2n+2m-1}\frac{q^{n+m+(1-\mathrm{max}(n,m))/2}-1}{\sqrt{q}-1}\]
    otherwise.
\end{thm}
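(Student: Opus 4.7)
The plan is to translate the character-sum problem into a cohomological one via the Grothendieck-Lefschetz trace formula and then control the result using the vanishing ranges already established in Corollaries~\ref{cor:H*_mix_p=-q} and~\ref{cor:H*_mix_p_not_power}. First, using the identification $\mathrm{tr}(\mathrm{Frob}_q \mid (\mathcal{R}^*\mathcal{L}_\chi)_{(f,g)}) = \chi(\mathcal{R}(f,g))$ recorded just before the theorem and the trace formula with twisted coefficients, I would write
\[
F_\chi(n,m,q) = \sum_i (-1)^i \mathrm{tr}\bigl(\mathrm{Frob}_q \mid H^i_c(\mathrm{Conf}_{n,m,\overline{\mathbb{F}_q}}; \mathcal{R}^*\mathcal{L}_\chi)\bigr).
\]
Since $\mathcal{L}_\chi$ is pure of weight $0$, its pullback $\mathcal{R}^*\mathcal{L}_\chi$ is pure of weight $0$ as well, and Deligne's Weil~II theorem gives $|\alpha| \le q^{i/2}$ for every Frobenius eigenvalue $\alpha$ on $H^i_c$. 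The triangle inequality then yields
\[
|F_\chi(n,m,q)| \le \sum_i \dim H^i_c\bigl(\mathrm{Conf}_{n,m,\overline{\mathbb{F}_q}}; \mathcal{R}^*\mathcal{L}_\chi\bigr) \cdot q^{i/2}.
\]

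Next, by Artin's comparison theorem these dimensions agree with those of $H^i_c(\mathrm{Conf}_{n,m}(\mathbb{C}); \mathcal{R}^*\mathcal{L}_\chi)$. The analysis of $\mathcal{R}_*$ at the start of Section~\ref{sec:char_sum} shows that $\mathcal{R}^*\mathcal{L}_\chi$ is precisely the $B_{n,m}$-representation of Example~\ref{exmp:V=W=k} with $V = W = \mathbb{C}$, $q = u = 1$, and $p = \chi(\zeta)$ for $\zeta$ a generator of $\mathbb{F}_q^\times$. Then $-q = -1$ is a primitive second root of unity, and when $\chi$ is quadratic one has $p = -1 = -q$, placing us in the hypothesis of Corollary~\ref{cor:H*_mix_p=-q}; when $\chi$ has order $d \ge 3$, $p$ is a primitive $d$-th root of unity, hence not a power of $-1 = -q$, placing us in the hypothesis of Corollary~\ref{cor:H*_mix_p_not_power}. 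Combining Poincar\'e duality $H^i_c(X; \mathcal{L}) \cong H_{2(n+m)-i}(X; \mathcal{L})$ for the smooth oriented real $(2(n+m))$-manifold $X = \mathrm{Conf}_{n,m}(\mathbb{C})$ with the smoothness vanishing $H^i_c = 0$ for $i < n+m$, I deduce that $H^i_c(\mathrm{Conf}_{n,m}(\mathbb{C}); \mathcal{R}^*\mathcal{L}_\chi) = 0$ outside the range $n+m \le i \le 2(n+m) - \max(n,m) + \varepsilon$, with $\varepsilon = 1$ in the quadratic case and $\varepsilon = 0$ otherwise.

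It remains to estimate $\sum_i \dim H^i_c$, and this is where the factor $2^{2n+2m-1}$ enters. Since $\mathrm{Conf}_{n,m}(\mathbb{C})$ is a $\binom{n+m}{n}$-fold covering space of $\mathrm{Conf}_{n+m}(\mathbb{C})$ under the color-forgetting map, the classical Fox-Neuwirth/Fuks stratification of the latter, with its $2^{n+m-1}$ cells indexed by compositions of $n+m$, pulls back to a cellular stratification of the former with at most $\binom{n+m}{n} \cdot 2^{n+m-1} \le 2^{n+m} \cdot 2^{n+m-1} = 2^{2n+2m-1}$ cells, which bounds $\sum_i \dim H^i_c$ for any local coefficient system. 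Plugging everything in,
\[
|F_\chi(n,m,q)| \le 2^{2n+2m-1} \sum_{i = n+m}^{2(n+m)-\max(n,m)+\varepsilon} q^{i/2} \le 2^{2n+2m-1} \cdot \frac{q^{n+m+(\varepsilon+1-\max(n,m))/2}-1}{\sqrt{q}-1},
\]
which for $\varepsilon = 1$ and $\varepsilon = 0$ recovers the quadratic and non-quadratic bounds of the theorem, respectively.

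The real content of the theorem is already absorbed into the topological Corollaries~\ref{cor:H*_mix_p=-q} and~\ref{cor:H*_mix_p_not_power}, and the remaining steps are largely bookkeeping. The main technical care is needed in threading Deligne's purity bound through the chain of identifications: \'etale $H^i_c$ matches analytic $H^i_c$ via Artin, which matches singular homology with local coefficients via Poincar\'e duality, allowing the topological vanishing for $\mathcal{R}^*\mathcal{L}_\chi$ and the cellular count $2^{2n+2m-1}$ to be combined cleanly with the weight-$\tfrac{i}{2}$ bound on Frobenius eigenvalues.
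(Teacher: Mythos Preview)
Your proof is correct and follows essentially the same route as the paper's: Grothendieck--Lefschetz with twisted coefficients, Artin comparison and Poincar\'e duality to import the vanishing ranges of Corollaries~\ref{cor:H*_mix_p=-q} and~\ref{cor:H*_mix_p_not_power}, a Fox--Neuwirth cell count to bound the Betti numbers by $2^{2n+2m-1}$, and Deligne's weight bound on Frobenius eigenvalues. The only cosmetic differences are that you invoke Weil~II directly on the pure-weight-zero sheaf $\mathcal{R}^*\mathcal{L}_\chi$ (the paper instead passes to the finite cover $X_\chi$ to reduce to constant coefficients) and that you additionally use affine vanishing $H^i_c=0$ for $i<n+m$, which sharpens an intermediate estimate but does not change the final bound.
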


\begin{proof}
Observe that the local system $\mathcal{R}^*\mathcal{L}_\chi$ on $\mathrm{Conf}_{n,m}$ is associated with the representation of its \'{e}tale fundamental group $\phi:\widehat{B_{n,m}}\xrightarrow{\mathcal{R}_*}\widehat{\mathbb{Z}}\to\mathbb{Z}/(q-1)\mathbb{Z} \cong\mathbb{F}_q^\times\xrightarrow{\chi}\mathbb{C}^\times$. The corresponding local system on $\mathrm{Conf}_{n,m}(\mathbb{C})$ is therefore associated with the homomorphism $B_{n,m} \to \mathbb{C}^\times$ that sends the generators $\sigma_i$ to $1$ for $1 \le i \le n+m-1, i\ne n$, and $\tau_n$ to a primitive $d^\mathrm{th}$ root of unity $\xi$, where $d = |\chi|$. By Poincar\'{e} duality and Artin's comparison theorem, we have
\[\begin{split}
    \mathrm{dim}H_{c,\acute{e}t}^{2n+2m-i}(\mathrm{Conf}_{n,m};\mathcal{R}^*\mathcal{L}_\chi) & = \mathrm{dim}H^i_{\acute{e}t}(\mathrm{Conf}_{n,m};\mathcal{R}^*\mathcal{L}_\chi) \\
    & = \mathrm{dim}H^i(\mathrm{Conf}_{n,m}(\mathbb{C});\mathcal{R}^*\mathcal{L}_\chi).
\end{split}\]
From our topological computations in Section~\ref{subsec:H*_1d}, it follows that \[\mathrm{dim}H^i(\mathrm{Conf}_{n,m}(\mathbb{C});\mathcal{R}^*\mathcal{L}_\chi) = 0\] for all $i \le \mathrm{max}(n,m)-2$ when $d=2$ by Corollary~\ref{cor:H*_mix_p=-q}, and for $i \le \mathrm{max}(n,m)-1$ when $d>2$ by Corollary~\ref{cor:H*_mix_p_not_power}.
For other degrees, we give a bound on the dimension of the cohomology group by directly bounding the number of $(2n+2m-i)$-dimensional cells of $\mathrm{Conf}_{n,m}$; that is, 
\[ \mathrm{dim} H_{c,\acute{e}t}^{2n+2m-i}(\mathrm{Conf}_{n,m};\mathcal{R}^*\mathcal{L}_\chi) \le {n+m \choose n}{n+m-1 \choose i} \le 2^{2n+2m-1} \]
where the first binomial coefficient is the number of coloring choices of $n$ blue and $m$ red points, and the second is the number of compositions of $n+m$ of length $n+m-i$.

Let $\rho:X_\chi\to\mathrm{Conf}_{n,m}$ be the $d$-fold cover of $\mathrm{Conf}_{n,m}$ associated with $\phi : \widehat{B_{n,m}} \to \mathrm{im}(\phi) = \mu_d$, where $d=|\chi|$. Since $X_\chi$ is a finite-sheeted cover of $\mathrm{Conf}_{n,m}$, there is an injection
\[ H^*(\mathrm{Conf}_{n,m};\mathcal{R}^*\mathcal{L}_\chi) \hookrightarrow H^*(X_\chi;\mathbb{Q}_\ell).\]
The eigenvalues of the geometric Frobenius on the right hand side are bounded by Deligne's bounds, which thus also apply to the left hand side.
Letting $\mu$ be the smallest number such that $\mathrm{dim}H^i(\mathrm{Conf}_{n,m}(\mathbb{C});\mathcal{R}^*\mathcal{L}_\chi) \ne 0$, by the Grothendieck-Lefschetz trace formula with twisted coefficients we then have
\[\begin{split}
|F_\chi(n,m,q)| & = \left| \sum_{(f,g)\in \mathrm{Conf}(\mathbb{F}_q)} \mathrm{tr} (\mathrm{Frob}_q|(\mathcal{R}^*\mathcal{L}_\chi)_{(f,g)}) \right|\\
& = \left| \sum_{i=0}^{2n+2m} (-1)^i \mathrm{tr} (\mathrm{Frob}_q|H_{c,\acute{e}t}^{2n+2m-i}(\mathrm{Conf}_{n,m};\mathcal{R}^*\mathcal{L}_\chi)) \right|\\
& \le \sum_{i=0}^{2n+2m} q^{n+m-i/2}\mathrm{dim}H_{c,\acute{e}t}^{2n+2m-i}(\mathrm{Conf}_{n,m};\mathcal{R}^*\mathcal{L}_\chi)\\
& = \sum_{i=\mu}^{2n+2m} q^{n+m-i/2}\mathrm{dim}H_{c,\acute{e}t}^{2n+2m-i}(\mathrm{Conf}_{n,m};\mathcal{R}^*\mathcal{L}_\chi)\\
& \le 2^{2n+2m-1} \sum_{i=\mu}^{2n+2m} q^{n+m-i/2}\\
& = 2^{2n+2m-1}\frac{(\sqrt{q})^{2n+2m-\mu+1}-1}{\sqrt{q}-1}\\
& = 2^{2n+2m-1} \frac{q^{n+m+(1-\mu)/2}-1}{\sqrt{q}-1}.
\end{split}\]
The statement of the theorem then follows immediately from the homological vanishing range asserted by Corollaries~\ref{cor:H*_mix_p=-q} and~\ref{cor:H*_mix_p_not_power}.
\end{proof}

Notice that the factor $2^{2n+2m-1}$ resulted from our very crude bound on the dimensions of $H_{c,\acute{e}t}^*(\mathrm{Conf}_{n,m};\mathcal{R}^*\mathcal{L}_\chi)$, whose computation may significantly improve the bound for small $q$. For large $q$ however, this factor is negligible, and our bound on $|F_\chi(n,m,q)|$ for any nontrivial character $\chi$ approximates $q^{n+m+(1-\mathrm{max}(n,m))/2}$.

The number of monic square-free degree-$n$ polynomials over $\mathbb{F}_q$ is classically known to be $q^n(1-1/q)$ (see, e.g., \cite{cef_repstab}). The average of a character $\chi$ of the resultant over all pairs of monic square-free polynomials of degrees $n$ and $m$ is therefore bounded by
\[\begin{split}
    \frac{|F_\chi(n,m,q)|}{q^{n+m}(1-1/q)^2} & \le \dfrac{1}{q^{n+m}(1-1/2)^2} \cdot 2^{2n+2m-1}\dfrac{q^{n+m+1-\mathrm{max}(n,m)/2}-1}{\sqrt{q}/2}\\
    & \le 4q^{-n-m} \cdot 2^{2n+2m+1} q^{n+m+(1-\mathrm{max}(n,m))/2}\\
    & = 2^{2n+2m+3} q^{(1-\mathrm{max}(n,m))/2}
\end{split} \]
which approaches $0$ when $n$ or $m$ approaches $\infty$ if $q \gtrsim 2^8$, and at the rate of approximately $q^{(1-\mathrm{max}(n,m))/2}$ when $q$ is large. Qualitatively,

\begin{cor}\label{cor:res_avg}
    For a sufficiently large $q$, the asymptotic average of a nontrivial character of the resultant over pairs of monic square-free polynomials over $\mathbb{F}_q$ approaches $0$ as the degree of either or both polynomials grows indefinitely.
\end{cor}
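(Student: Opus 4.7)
The plan is to obtain the corollary as a direct counting consequence of Theorem~\ref{thm:main_nb}. First I would recall that the number of monic square-free polynomials of degree $n$ over $\mathbb{F}_q$ is $q^n(1-1/q)$ (a classical identity, equivalent to the $\zeta$-function computation), so that the total number of pairs $(f,g)$ of monic square-free polynomials of degrees $n$ and $m$ is $q^{n+m}(1-1/q)^2$. Dividing the bound in Theorem~\ref{thm:main_nb} by this count gives an explicit upper bound on the asymptotic average of $\chi \circ \mathcal{R}$ over all such pairs.

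Second, I would simplify this quotient using elementary estimates valid for $q \ge 4$, namely $(1-1/q)^{-1} \le 2$ and $(\sqrt{q}-1)^{-1} \le 2/\sqrt{q}$. Substituting these into the Theorem~\ref{thm:main_nb} bound (and using the weaker non-quadratic form of the inequality, which is valid for every nontrivial $\chi$) yields a clean majorant of the shape
\[
\frac{|F_\chi(n,m,q)|}{q^{n+m}(1-1/q)^2} \;\le\; 2^{2n+2m+3}\, q^{(1-\mathrm{max}(n,m))/2}.
\]

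The final step is to read off the asymptotics. Since $n+m \le 2\,\mathrm{max}(n,m)$, the exponential prefactor satisfies $2^{2n+2m+3} \le 8\cdot 16^{\mathrm{max}(n,m)}$, so the majorant is dominated by $8\sqrt{q}\cdot\bigl(16/\sqrt{q}\bigr)^{\mathrm{max}(n,m)}$. This tends to $0$ as $\mathrm{max}(n,m) \to \infty$ precisely when $q > 256$, i.e., for all $q \gtrsim 2^8$. Since $\mathrm{max}(n,m) \to \infty$ in each of the three regimes considered (either $n$, or $m$, or both tending to infinity), the conclusion of the corollary follows uniformly.

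No essential obstacle is expected: the corollary is a formal consequence of Theorem~\ref{thm:main_nb} combined with the square-free polynomial count. The only mild care is to ensure that the crude exponential factor $2^{2n+2m-1}$ arising from the dimension count of Fox-Neuwirth cells is outpaced by the power saving in $q$, which forces the qualifier that $q$ be sufficiently large; one could sharpen the threshold $q\gtrsim 2^8$ by replacing the cell-counting estimate with finer dimension bounds on $H^*_{c,\acute{e}t}(\mathrm{Conf}_{n,m};\mathcal{R}^*\mathcal{L}_\chi)$, but such a refinement is not needed for the qualitative statement here.
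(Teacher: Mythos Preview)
Your approach is exactly the paper's: divide the bound from Theorem~\ref{thm:main_nb} by the square-free count $q^{n+m}(1-1/q)^2$, simplify with the elementary estimates $(1-1/q)^{-1}\le 2$ and $(\sqrt{q}-1)^{-1}\le 2/\sqrt{q}$, and observe that the resulting majorant $2^{2n+2m+3}q^{(1-\max(n,m))/2}$ goes to zero once $q\gtrsim 2^8$. There is one slip worth fixing: the non-quadratic bound in Theorem~\ref{thm:main_nb} has the \emph{smaller} exponent and is therefore the \emph{stronger} inequality, so it is not the one that automatically covers the quadratic case; the bound valid for every nontrivial $\chi$ is the quadratic one, and that is what actually produces the displayed estimate $2^{2n+2m+3}q^{(1-\max(n,m))/2}$ after dividing by $\sqrt{q}$.
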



\bibliographystyle{alpha}
\bibliography{BraidRef}

\end{document}